\theoremstyle{plain}
\numberwithin{equation}{section}
\numberwithin{figure}{section}
\numberwithin{table}{section}
\newtheorem{stthm}{\protect\theoremname}[section]
\newtheorem{spprop}{\protect\propositionname}[section]
\theoremstyle{remark}
\newtheorem*{rem*}{\protect\remarkname}
\theoremstyle{plain}
\newtheorem{sllem}{\protect\lemmaname}[section]
\theoremstyle{remark}
\newtheorem*{claim*}{\protect\claimname}
\DeclareMathOperator*{\BCo}{\bigcirc}
\providecommand{\claimname}{Claim}
\providecommand{\lemmaname}{Lemma}
\providecommand{\propositionname}{Proposition}
\providecommand{\remarkname}{Remark}
\providecommand{\theoremname}{Theorem}
\begin{document}
\def\rightmark{ON WEIGHTED $L^p$-SOBOLEV ESTIMATES FOR SOLUTIONS OF THE $\bar\partial$ EQUATION}
\def\leftmark{P. CHARPENTIER \& Y. DUPAIN}
\def\RSsectxt{Section~}%
\title{On Weighted $L^{p}$-Sobolev Estimates for Solutions of the $\overline{\partial}$-equation
on Lineally Convex Domains of Finite Type and Application}
\author{P. Charpentier \& Y. Dupain}
\begin{abstract}
We obtain some weighted $L^{p}$-Sobolev estimates with gain on $p$
and the weight for solutions of the $\overline{\partial}$-equation
in lineally convex domains of finite type in $\mathbb{C}^{n}$ and
apply them to obtain weighted $L^{p}$-Sobolev estimates for weighted
Bergman projections of convex domains of finite type for quite general
weights equivalent to a power of the distance to the boundary.
\end{abstract}

\keywords{lineally convex, finite type, $\overline{\partial}$-equation, Bergman
projection}
\subjclass[2010]{32T25, 32T27}
\address{P. Philippe Charpentier, Univ. Bordeaux, CNRS, Bordeaux INP, IMB,
UMR 5251, F-33400 Talence, France}
\email{Philippe Charpentier: philippe.charpentier@math.u-bordeaux.fr}
\maketitle

\section{Introduction}

The motivation of this paper is to extend the weighted $L^{p}$-Sobolev
estimates for weighted Bergman projections obtained in \cite[(1) of Theorem 1.1]{CDM}
for convex domains of finite type in $\mathbb{C}^{n}$ to more general
weights.

For this we use the method of Section 4 of \cite{ChDu18} which needs
weighted $L^{p}$-Sobolev estimates for solutions of the $\overline{\partial}$-equation
with suitable gains, the weight being a power of the distance to the
boundary. So in this paper we try to obtain such estimates.

Estimates for derivatives of solutions of the $\overline{\partial}$-equation
in convex domains of finite type has already a long history. The first
result was obtained by W. Alexandre for $\mathcal{C}^{k}$ estimates
in \cite{Ale05,Ale06}. For $L^{p}$-Sobolev estimates, with gain
on the Sobolev index, partial results were obtained by Ahn H. and
Cho H. in \cite{Ahn_H-Cho_H-Opt-Sob-cvx2003} and complete results
announced by Yao L. in \cite{yao-liding-Sob-Hol-d-bar-cvx-finite-multitype-2022}.

Here we obtain partial weighted $L^{p}$-Sobolev estimates, with both
gain on $p$ and the power of the weight, for solutions of the $\overline{\partial}$-equation
on the more general case of lineally convex domains of finite type.
We use the operator constructed and used in \cite{CDMb,ChDu18} and
we apply them to weighted Bergman projections.

The paper is organized as follows. In Section \ref{sec:Notations-and-main-results}
we fix our notations and state the theorems about the estimates on
solutions of the $\overline{\partial}$-equation. In Section \ref{sec:Geometry-of-lineally-convex}
we give the main properties of the geometry of lineally convex domains
of finite type, in Section \ref{sec:Definition-and-properties-solution-d-bar}
we recall the construction of the operator solving the $\overline{\partial}$-equation,
and, in Section \ref{sec:Proof-of-Theorems} we give the proofs of
the Theorems stated in Section \ref{sec:Notations-and-main-results}.
Finally, in Section \ref{sec:Application-to-weighted-Bergman} we
give the estimates for weighted Bergman projections we can obtain
with our method.

\section{\label{sec:Notations-and-main-results}Notations and main results}

Throughout this paper $\Omega=\left\{ \rho<0\right\} $ is a smoothly
bounded lineally convex domain of finite type $\leq m$ in $\mathbb{C}^{n}$
which means that (c.f. \cite{CDMb}) for all point $p$ in the boundary
$\partial\Omega$ of $\Omega$, there exists a neighborhood $W$ of
$p$ such that, for all point $z\in\partial\Omega\cap W$,
\[
\left(z+T_{z}^{1,0}\right)\cap(\Omega\cap W)=\emptyset,
\]
where $T_{z}^{1,0}$ is the holomorphic tangent space to $\partial\Omega$
at the point $z$ and that there exists a smooth defining function
$\rho$ of $\Omega$ such that, for $\delta_{0}$ sufficiently small,
the domains $\Omega_{t}=\left\{ \rho(z)<t\right\} $, $-\delta_{0}\leq t\leq\delta_{0}$,
satisfy the same condition.

\medskip{}

For $d$ a non negative integer and $p\in\left[1,+\infty\right[$,
$\delta_{\Omega}$ being the distance to the boundary of $\Omega$,
we denote by $L_{d}^{p}\left(\Omega,\delta_{\Omega}^{\gamma}\right)$,
$-1<\gamma$ , the space of functions $f$ such that, for any derivative
$D$ of order $\alpha$, $0\leq\alpha\leq d$, $Df$ belongs to $L^{p}\left(\Omega,\delta_{\Omega}^{\gamma}\right)$
and by $L_{d,0}^{p}\left(\Omega,\delta_{\Omega}^{\gamma}\right)$
the closure in $L_{d}^{p}\left(\Omega,\delta_{\Omega}^{\gamma}\right)$
of the set of $\mathcal{C^{\infty}}$ functions with compact support
in $\Omega$. Moreover we denote by $L_{d,r}^{p}\left(\Omega,\delta_{\Omega}^{\gamma}\right)$
or simply $L_{d,r}^{p}\left(\delta^{\gamma}\right)$ the space of
forms of degree $r$ whose coefficients belong to $L_{d}^{p}\left(\Omega,\delta_{\Omega}^{\gamma}\right)$
and $L_{d,0,r}^{p}\left(\Omega,\delta_{\Omega}^{\gamma}\right)$ or
simply $L_{d,0,r}^{p}\left(\delta^{\gamma}\right)$ the space of forms
of degree $r$ whose coefficients belong to $L_{d,0}^{p}\left(\Omega,\delta_{\Omega}^{\gamma}\right)$.
\begin{stthm}
\label{thm:Theroem-without-derivative}Let $\Omega=\left\{ \rho<0\right\} $
be a smoothly bounded lineally convex domain of finite type $\leq m$.
Let $p\in\left[1,+\infty\right[$. Let $\gamma$ and $\gamma'$ be
two real numbers such that $\gamma'>-1$ and $\gamma-\nicefrac{p}{m}\leq\gamma'\leq\gamma$.
Then there exists a linear operator $T$, depending on $\rho$ and
$\gamma$, such that, for any $\overline{\partial}$-closed $\left(0,r\right)$-form
($1\leq r\leq n-1$) whose coefficients are in $L^{p}\left(\Omega,\delta_{\Omega}^{\gamma}\right)$,
$Tf$ is a solution of the equation $\overline{\partial}(Tf)=f$ such
that:
\begin{enumerate}
\item For $1\leq p<m(\gamma+n)+2-(m-2)(r-1)$,
\[
\left\Vert Tf\right\Vert _{L^{q}\left(\delta_{\Omega}^{\gamma'}\right)}\lesssim\left\Vert f\right\Vert _{L^{p}\left(\delta_{\Omega}^{\gamma}\right)},
\]
where $p\text{, }\gamma\text{, }q\text{, and }\gamma'$ satisfy the
following condition:\\
if $p=1$ and $\gamma\geq0$, $\frac{1}{q}>\frac{1}{p}-\frac{1-\frac{m}{p}(\gamma-\gamma')}{m(\gamma'+n)+2-(m-2)(r-1)}$
and $\frac{1}{q}=\frac{1}{p}-\frac{1-\frac{m}{p}(\gamma-\gamma')}{m(\gamma'+n)+2-(m-2)(r-1)}$
otherwise.
\item For $p>m(\gamma+n)+2-(m-2)(r-1)$,
\[
\left\Vert Tf\right\Vert _{\Lambda_{\alpha}}\apprle\left\Vert f\right\Vert _{L^{p}\left(\delta_{\Omega}^{\gamma}\right)},
\]
with $\alpha=\frac{1}{m}\left[1-\frac{m(\gamma+n)+2-(m-2)(r-1)}{p}\right]$.
\end{enumerate}
\end{stthm}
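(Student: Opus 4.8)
The plan is to take for $T$ the explicit integral solution operator constructed in \cite{CDMb,ChDu18} and recalled in Section~\ref{sec:Definition-and-properties-solution-d-bar}, modified by a weight factor which is a power of $(-\rho)$ (this is where the dependence on $\rho$ and on $\gamma$ comes in): one writes $Tf(z)=\int_{\Omega}K_{\gamma}(z,\zeta)\wedge f(\zeta)$, where $K_{\gamma}$ is built from the Cauchy--Fantappiè--Leray type kernel of those papers by inserting a factor $(-\rho(\zeta))^{a}$ with $a$ a suitable affine function of $\gamma$, together with the associated homotopy and correction terms that guarantee $\overline{\partial}(Tf)=f$ whenever $f$ is $\overline{\partial}$-closed. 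That $T$ actually solves the equation is already part of the construction in \cite{CDMb,ChDu18}, so the whole task is to establish the two families of norm estimates for this fixed operator.

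First I would assemble the pointwise size and first-order regularity estimates for $K_{\gamma}$ in terms of the geometry described in Section~\ref{sec:Geometry-of-lineally-convex}: the anisotropic pseudoballs with their radii in the distinguished directions, the associated pseudodistance $d$, the comparison of $\delta_{\Omega}(\zeta)$ with the relevant geometric quantity, and the volume estimate for those pseudoballs. Lineal convexity of $\Omega$ (and of the $\Omega_{t}$) enters precisely here, through the lower bound on the denominator of the Cauchy--Fantappiè kernel provided by $(z+T^{1,0}_{z})\cap(\Omega\cap W)=\emptyset$. The target is a bound of the form $|K_{\gamma}(z,\zeta)|\lesssim(-\rho(\zeta))^{a}$ times a nonisotropic Riesz-type kernel whose critical homogeneity is exactly $m(\gamma+n)+2-(m-2)(r-1)$, the quantity occurring in the statement; the $(m-2)(r-1)$ correction reflects, as in the $\mathcal{C}^{k}$ theory of Alexandre, the number of ``good'' directions available to a $(0,r)$-form, and an analogous bound for the first derivatives of $K_{\gamma}$, losing one power of the transverse scale.

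To prove (1) I would foliate $\Omega$ by the level sets $\{\rho=-2^{-j}\}$ and run a Schur-type argument adapted to the space of homogeneous type attached to $\partial\Omega$: using the kernel bound, $T$ is dominated by a nonisotropic fractional integral of the appropriate order, composed with a one-dimensional weighted integration in the transverse variable against $(-\rho)^{\gamma}$ and $(-\rho)^{\gamma'}$. The Hardy--Littlewood--Sobolev inequality on that homogeneous space together with elementary weighted one-variable estimates then yields the $L^{p}(\delta_{\Omega}^{\gamma})\to L^{q}(\delta_{\Omega}^{\gamma'})$ bound with precisely the stated relation between $1/q$, $1/p$, $\gamma$ and $\gamma'$; the hypotheses $\gamma'>-1$ and $\gamma-\nicefrac{p}{m}\le\gamma'\le\gamma$ are exactly what force convergence of the transverse integrals near the boundary and keep $q\ge p$. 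The case $p=1$, $\gamma\ge0$ is the usual $L^{1}$ endpoint, where only the weak-type estimate survives, so one must take the strict inequality $1/q>\cdots$. For (2), with $p$ above the critical exponent, I would estimate the increment $Tf(z)-Tf(z')$ by splitting $\Omega$ into the nonisotropic ball centered at $z$ of radius comparable to the pseudodistance from $z$ to $z'$ and its complement: on the ball one applies H\"older's inequality, using that the $L^{p'}((-\rho)^{-\gamma p'/p})$-norm of $K_{\gamma}(z,\cdot)$ over it is $O(d(z,z')^{m\alpha})$, and on the complement one uses the gradient estimate for $K_{\gamma}$ and the mean value inequality; summing over a dyadic decomposition gives $Tf\in\Lambda_{\alpha}$ with the stated $\alpha$, the factor $1/m$ being the loss between the pseudodistance and the Euclidean distance.

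The step I expect to be the main obstacle is the first one: extracting from the construction of \cite{CDMb,ChDu18} pointwise bounds for $K_{\gamma}$ \emph{and its derivatives} that are at once sharp enough to reproduce the exact critical exponent $m(\gamma+n)+2-(m-2)(r-1)$ and robust enough to be integrated against two different powers of $\delta_{\Omega}$. Keeping the bookkeeping straight between the weight exponent $a$ hard-wired into the kernel, the data weight $\gamma$, and the target weight $\gamma'$ throughout the dyadic summation --- so that nothing diverges either at the boundary (where $\gamma'>-1$ is decisive) or along the diagonal --- is where the real work lies; once those estimates are in hand, parts (1) and (2) follow from by now standard fractional-integration and Morrey-type arguments on spaces of homogeneous type.
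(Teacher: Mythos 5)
Your proposal is correct in outline and follows essentially the same route as the paper: the same Berndtsson--Andersson type operator built on the Diederich--Fornaess support function (with the $\gamma$-dependence entering through a high power of $\rho(\zeta)$ in the kernel), pointwise kernel bounds from the anisotropic geometry, a weighted Schur test over dyadic pseudo-annuli $P(z,2^{j}\delta(z))$ for part (1), and a Hardy--Littlewood/Hölder-increment argument for part (2). The only notable divergences are cosmetic: the paper treats the $k=0$ term of the kernel by a direct increment estimate (since its $z$-gradient is not integrable enough for the Hardy--Littlewood lemma) while using the gradient bound for $k\geq1$, and the strict inequality at $p=1$, $\gamma\geq0$ arises there from the Schur-test exponent $\mu$ having to stay strictly below the critical value rather than from a weak-type endpoint.
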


\begin{rem*}
For $\gamma=\gamma'$ this statement is identical to \cite[Theorem 2.1]{ChDu18}
but (1) is stronger when $\gamma\neq\gamma'$. For $p=m(\gamma+n)+2-(m-2)(r-1)$
it could be proved that $Tf\in BMO(\Omega)$ but we will not do it
here.
\end{rem*}

\begin{stthm}
\label{thm:Theorem-Sobolev-Lp-k}Let $\Omega=\left\{ \rho<0\right\} $
be a smoothly bounded lineally convex domain of finite type $\leq m$.
Let $p\in\left[1,+\infty\right[$. Let $\gamma$ and $\gamma'$ be
two real numbers such that $\gamma'>-1$ and $\gamma-\nicefrac{p}{m}\leq\gamma'\leq\gamma$.
Then there exists a linear operator $T$, depending on $\rho$ and
$\gamma$, such that, for any $\overline{\partial}$-closed $\left(0,r\right)$-form
whose coefficients are in $L_{1}^{p}\left(\Omega,\delta_{\Omega}^{\gamma}\right)$,
$Tf$ is a solution of the equation $\overline{\partial}(Tf)=f$ such
that:
\[
\left\Vert Tf\right\Vert _{L_{1,r-1}^{q}\left(\delta_{\Omega}^{\gamma'}\right)}\lesssim\left\Vert f\right\Vert _{L_{1,r}^{p}\left(\delta_{\Omega}^{\gamma}\right)},
\]
where $p\text{, }\gamma\text{, }q\text{, and }\gamma'$ satisfy the
following condition: $\frac{1}{q}=\frac{1}{p}-\alpha$ with:
\begin{enumerate}
\item If $\gamma\geq p-1$: $\alpha<\frac{1-\frac{m}{p}(\gamma-\gamma')}{m(\gamma'+n)+2-(m-2)(r-1)}$;
\item If $\gamma<p-1$: $\gamma'\geq\gamma-\frac{\gamma+1}{p}$ and $\alpha<\min\left\{ \frac{1-\frac{m}{p}(\gamma-\gamma')}{m(\gamma'+n)+2-(m-2)(r-1)},\frac{\frac{\gamma+1}{p}-(\gamma-\gamma')}{2n+\gamma'}\right\} $.
\end{enumerate}
\end{stthm}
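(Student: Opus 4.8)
The plan is to build on the same integral operator $T$ used for Theorem \ref{thm:Theroem-without-derivative} (the operator of \cite{CDMb,ChDu18}), and to exploit the fact that its kernel, after one differentiation under the integral, retains essentially the same pointwise size estimates as the undifferentiated kernel but with one extra power of the (anisotropic) distance in the denominator — this is exactly the source of the unit loss passing from $L^p$ to $L^p_1$ estimates. So first I would write $D(Tf) = T_1(\nabla f) + R f$, where $T_1$ is an operator of the same geometric type as $T$ acting on the first derivatives of $f$, and $R$ is a remainder operator whose kernel is $\overline\partial$-exact-type and hence at least as good (in fact better, since the boundary terms gain integrability). The term $T_1(\nabla f)$ is the essential one: one must estimate it in $L^q(\delta^{\gamma'})$ against $\|\nabla f\|_{L^p(\delta^\gamma)}$, i.e. one must re-run the Schur-type / fractional-integration argument behind Theorem \ref{thm:Theroem-without-derivative} but with the kernel carrying one additional unit of homogeneity.

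Concretely, I expect the proof to reduce to a weighted mixed-norm estimate on a positive integral operator built from the McNeal/Bruna--Nagel--Wainger polydiscs adapted to $\partial\Omega$: split the integration into the "tangential'' directions (where the relevant radii are the $\tau(p,\delta)$ quantities controlled by the finite type $m$, giving the denominator term $m(\gamma'+n)+2-(m-2)(r-1)$) and the "normal'' direction (where the relevant scale is just $\delta$ itself, giving the competing denominator $2n+\gamma'$). Differentiating the kernel once costs, in the worst case, either a tangential unit or a normal unit; the tangential loss produces the first bound $\alpha<\frac{1-\frac mp(\gamma-\gamma')}{m(\gamma'+n)+2-(m-2)(r-1)}$ exactly as in the no-derivative theorem with the shift $\gamma\mapsto\gamma'$ absorbed into the numerator via the interpolation inequality $\delta^{\gamma'}\le \delta^{\gamma}\cdot\delta^{\gamma'-\gamma}$ and Hölder, while the normal loss produces the second, genuinely new bound $\alpha<\frac{\frac{\gamma+1}p-(\gamma-\gamma')}{2n+\gamma'}$. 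The case split $\gamma\gtrless p-1$ reflects whether the weight $\delta^\gamma$ is integrable enough near the boundary that the normal-direction term is automatically dominated (then only the tangential obstruction survives, case (1)) or whether one must impose the extra lower bound $\gamma'\ge\gamma-\frac{\gamma+1}p$ to keep the boundary integral convergent (case (2)); the condition $\gamma'>-1$ together with $\gamma-\tfrac pm\le\gamma'\le\gamma$ is used throughout to ensure all the weighted integrals in the Schur test converge.

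For the remainder/boundary term $Rf$ I would check that its kernel is of strictly lower anisotropic homogeneity — it involves $\overline\partial\rho$ factors that vanish to order $\delta$ — so the $L^p(\delta^\gamma)\to L^q(\delta^{\gamma'})$ bound for it follows a fortiori from the computation already done, with room to spare, and contributes no new constraint on $\alpha$. Assembling the two pieces gives $\|D(Tf)\|_{L^q(\delta^{\gamma'})}\lesssim \|f\|_{L^p_1(\delta^\gamma)}$, and combining with the $L^q$ bound on $Tf$ itself (which is Theorem \ref{thm:Theroem-without-derivative}, valid because the hypotheses here force $p<m(\gamma+n)+2-(m-2)(r-1)$ in the relevant range) yields the full $L^q_{1,r-1}(\delta^{\gamma'})$ estimate.

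The main obstacle will be the bookkeeping in the weighted Schur test for $T_1$: one has to track simultaneously the shift $\gamma\to\gamma'$, the extra kernel unit from differentiation, and the two competing geometric scales, and verify that the $\varepsilon$-room in the strict inequalities for $\alpha$ is exactly what the convergence of the boundary integrals demands — in particular establishing the sharp exponent $2n+\gamma'$ in the normal direction and showing that the threshold $\gamma=p-1$ is precisely where the normal term stops being automatically subordinate. This is where most of the work of Section \ref{sec:Proof-of-Theorems} will go.
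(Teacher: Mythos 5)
Your overall skeleton (differentiate under the integral, split according to whether the derivative falls on $f$ or on the kernel, then run a weighted Schur test over the anisotropic polydiscs) matches the paper's strategy, but the proposal inverts which term is hard, and that inversion hides the two ideas the proof actually needs. In the paper the term where the derivative lands on $\widetilde f$ is the \emph{easy} one: it is exactly an operator of Theorem \ref{thm:Theroem-without-derivative} applied to $\nabla f$, and it contributes only the first bound on $\alpha$. The difficult term is what you call the remainder $Rf$, where $\Delta=\partial_{z_j}+\partial_{\zeta_j}$ hits the kernel factors: differentiating $\bigl(\rho(\zeta)/(\tfrac1{K_0}S+\rho)\bigr)^{N+k}$ or the $Q$-products costs an extra factor comparable to $1/\bigl|\rho(\zeta)\bigr|$ or $1/\bigl|\tfrac1{K_0}S(z,\zeta)+\rho(\zeta)\bigr|$ (Lemma \ref{lem:estimates-derivatives-Qi-in-kernel}), i.e.\ the kernel gets \emph{more} singular, not less. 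Your claim that this piece ``vanishes to order $\delta$'' and ``contributes no new constraint on $\alpha$'' is the opposite of what happens: this piece is precisely the source of the case split at $\gamma=p-1$, of the extra hypothesis $\gamma'\ge\gamma-\frac{\gamma+1}{p}$, and of the second bound $\alpha<\frac{\frac{\gamma+1}{p}-(\gamma-\gamma')}{2n+\gamma'}$ (via the kernel $K(z,\zeta)$ of \eqref{eq:kernel-one-derivative}, which carries the additional factor $(\tfrac1{K_0}S+\rho)^{-1}$). Attributing the $2n+\gamma'$ denominator to a ``normal loss'' in $T_1(\nabla f)$ does not survive inspection.

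Two concrete devices are missing and the argument does not close without them. First, one cannot simply differentiate the kernel in $z$, because a $z$-derivative of $s(z,\zeta)/|z-\zeta|^{2(n-k)}$ is not integrable against the available weights; the paper avoids this by extending $K_k$ and $f$ across $\partial\Omega$, substituting $\xi=z-\zeta$ as in \eqref{eq:Tkf-integrate-Cn}, so that only the combination $\Delta_J$ acts and the Newtonian factor is never differentiated. Second, for $\gamma<p-1$ the extra $1/|\rho(\zeta)|$ cannot be absorbed by moving a power of $\delta_\Omega$ onto $f$ (Lemma \ref{lem:weight-est-Lp1} only yields $f\in L^p(\delta_\Omega^\nu)$ for $\nu>-1$ in that range, not $\nu>\gamma-p$); the paper instead integrates by parts in the real normal direction \`a la Wu (Lemma \ref{lem:Normal-derivative-weight-Wu}), trading powers of $(\tfrac1{K_0}S+\rho)^{-1}$ for normal derivatives falling on the various kernel factors or on $\widetilde f$, and only then runs the Schur test with the auxiliary exponent $\beta\in\left]\gamma-\gamma',\gamma+1\right[$ that produces the threshold $\frac{\frac{\gamma+1}{p}-(\gamma-\gamma')}{2n+\gamma'}$. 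Without these two steps your plan establishes only case (1) in the regime where Lemma \ref{lem:weight-est-Lp1} applies, and gives no route to case (2).
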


\begin{stthm}
\label{thm:Theorem-Sobolev-Lp-k-0-compact-supp}Let $\Omega$ be a
smoothly bounded lineally convex domain of finite type $\leq m$.
Let $p\in\left[1,+\infty\right[$. Let $d\geq2$ be an integer. Let
$\gamma$ and $\gamma'$ be two real numbers such that $\gamma'>-1$
and $\gamma-\nicefrac{p}{m}\leq\gamma'\leq\gamma$. Then there exists
a linear operator $T$, depending on $\rho$ and $\gamma$, such that,
for any $\overline{\partial}$-closed $\left(0,r\right)$-form ($1\leq r\leq n-1$)
$f$ whose coefficients are in $L_{d}^{p}\left(\Omega,\delta_{\Omega}^{\gamma}\right)$
and $p\in\left[1,+\infty\right[$, $Tf$ is a solution of the equation
$\overline{\partial}(Tf)=f$ such that:
\[
\left\Vert Tf\right\Vert _{L_{d,r-1}^{q}\left(\delta_{\Omega}^{\gamma'}\right)}\lesssim\left\Vert f\right\Vert _{L_{d,r}^{p}\left(\delta_{\Omega}^{\gamma}\right)},
\]
if one of the three following conditions is satisfied:
\begin{enumerate}
\item $f\in L_{d,0,r}^{p}\left(\Omega,\delta_{\Omega}^{\gamma}\right)$,
\item $f\in L_{d,r}^{p}\left(\Omega,\delta_{\Omega}^{\gamma}\right)$ and
$\gamma\geq dp-1$,
\item $f\in L_{d,r}^{p}\left(\Omega,\delta_{\Omega}^{\gamma}\right)$ and
$d(p-1)\leq\gamma<dp-1$,
\end{enumerate}
with $p\text{, }\gamma\text{, }q\text{, and }\gamma'$ satisfy:
\begin{itemize}
\item when condition (1) or (2) is verified, $\frac{1}{q}>\frac{1}{p}-\frac{1-\frac{m}{p}(\gamma-\gamma')}{m(\gamma'+n)+2-(m-2)(r-1)}$,
\item when condition (3) is verified, $\frac{1}{q}>\frac{1}{p}-\min\left\{ \frac{1-\frac{m}{p}(\gamma-\gamma')}{m(\gamma'+n)+2-(m-2)(r-1)},\frac{\frac{\gamma+1}{p}-(\gamma-\gamma')}{2n+\gamma'}\right\} $.
\end{itemize}
\end{stthm}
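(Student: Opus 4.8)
The plan is to derive all three cases from the zeroth-order estimate of Theorem~\ref{thm:Theroem-without-derivative}, and in the intermediate range also from the first-order estimate of Theorem~\ref{thm:Theorem-Sobolev-Lp-k}, by transferring onto the datum $f$, through integrations by parts, every $z$-derivative that falls on $Tf$. I use the same operator $T$ as in those theorems, so the identity $\overline{\partial}(Tf)=f$ is already granted and only the estimate is in question.

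Write $Tf(z)=\int_{\Omega}K(z,\zeta)\wedge f(\zeta)$, with $K$ the Cauchy--Fantappi\`e--Leray type kernel of \cite{CDMb,ChDu18} built from the Diederich--Fornaess support function of the lineally convex domain. Since $K$ depends on $(z,\zeta)$ essentially through the support function and through $z-\zeta$, each $z$-derivative of $K$ is, up to sign, the corresponding $\zeta$-derivative of $K$ plus terms in which the derivative has hit the smooth bounded data of $\rho$ and which are kernels of the same nature. Iterating this $\left|\alpha\right|\le d$ times and then integrating by parts in $\zeta$ expresses $D^{\alpha}(Tf)$ as a finite sum $\sum_{\left|\beta\right|\le d}T_{\beta}(D^{\beta}f)$, each $T_{\beta}$ being a kernel operator whose kernel obeys the same weighted Schur-type bounds as $K$ (up to an arbitrarily small loss of homogeneity coming from the derivatives that hit the support function), plus boundary integrals of the form $\int_{\partial\Omega}\bigl(D^{d-1-k}K\bigr)(z,\zeta)\,\bigl(D^{k}f\bigr)(\zeta)\,d\sigma(\zeta)$, $0\le k\le d-1$, in which a trace of a derivative of $f$ is paired with a $\zeta$-derivative of $K$ restricted to $\partial\Omega$.

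If $f\in L_{d,0,r}^{p}(\Omega,\delta_{\Omega}^{\gamma})$, i.e.\ under condition~(1), then $f$ is a limit in $L_{d,r}^{p}(\Omega,\delta_{\Omega}^{\gamma})$ of forms with compact support, every boundary integral vanishes, and Theorem~\ref{thm:Theroem-without-derivative} applied to each $D^{\beta}f\in L^{p}(\delta_{\Omega}^{\gamma})$ yields the claimed bound; the open inequality for $1/q$ — as already in Theorem~\ref{thm:Theorem-Sobolev-Lp-k} — leaves the margin absorbed by the finitely many $T_{\beta}$ and by the small homogeneity loss just mentioned. Condition~(2) reduces to~(1): for $\gamma\ge dp-1$ the iterated weighted Hardy inequality gives $L_{d,r}^{p}(\Omega,\delta_{\Omega}^{\gamma})=L_{d,0,r}^{p}(\Omega,\delta_{\Omega}^{\gamma})$, so its hypothesis holds automatically. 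Under condition~(3), $d(p-1)\le\gamma<dp-1$, the space $L_{d,r}^{p}(\Omega,\delta_{\Omega}^{\gamma})$ is strictly larger than $L_{d,0,r}^{p}(\Omega,\delta_{\Omega}^{\gamma})$ and the boundary integrals do not all vanish: the term with parameter $k$ vanishes exactly when $D^{k}f\in W_{0}^{d-k,p}(\delta_{\Omega}^{\gamma})$, which by Hardy holds for $k\ge d-\tfrac{\gamma+1}{p}$, while the surviving ones are estimated by combining the weighted trace theorem — the bounds $d(p-1)\le\gamma<dp-1$ being exactly what keeps these traces well defined and controlled by $\left\Vert f\right\Vert _{L_{d,r}^{p}(\delta_{\Omega}^{\gamma})}$ — with the mapping properties of the boundary operators attached to the restricted kernels $D^{d-1-k}K$. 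Those boundary operators carry no anisotropic gain, so this contribution only gains $\frac{\tfrac{\gamma+1}{p}-(\gamma-\gamma')}{2n+\gamma'}$ in the Lebesgue exponent, whose nonnegativity is precisely the requirement $\gamma'\ge\gamma-\frac{\gamma+1}{p}$; taking the minimum with the interior gain $\frac{1-\frac{m}{p}(\gamma-\gamma')}{m(\gamma'+n)+2-(m-2)(r-1)}$ gives the stated range of $q$, exactly as in Theorem~\ref{thm:Theorem-Sobolev-Lp-k}.

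The delicate point is the second step. One must control $D_{z}^{\alpha}K$ in terms of $\zeta$-derivatives while respecting the anisotropy of the finite-type geometry of Section~\ref{sec:Geometry-of-lineally-convex} — a complex-tangential derivative of $K$ is only $\delta_{\Omega}^{-1/m}$-singular whereas a normal one is $\delta_{\Omega}^{-1}$-singular — so that, after the integrations by parts, the kernels $T_{\beta}$ still satisfy the weighted Schur bounds with the interior gain above, and, in case~(3), so that the restricted kernels $D^{d-1-k}K$ together with the sharp constants in the weighted trace and Hardy inequalities produce exactly the second gain. This is where the geometric estimates of Sections~\ref{sec:Geometry-of-lineally-convex} and~\ref{sec:Definition-and-properties-solution-d-bar} are used at full strength.
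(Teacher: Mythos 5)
Your high-level strategy (convert $z$-derivatives of $Tf$ into $\zeta$-derivatives falling on $f$, then invoke Theorems \ref{thm:Theroem-without-derivative} and \ref{thm:Theorem-Sobolev-Lp-k}) is the right one, but the mechanism you build the three-case structure on is not the one that actually operates, and as described it would collapse. Your integrations by parts produce boundary integrals $\int_{\partial\Omega}(D^{d-1-k}K)(z,\zeta)(D^{k}f)(\zeta)\,d\sigma(\zeta)$, and you make the trichotomy (1)--(3) hinge on which of these survive and on a weighted trace theorem. But the kernel $K_{N}$ carries the factor $\bigl(\rho(\zeta)/(\tfrac{1}{K_{0}}S(z,\zeta)+\rho(\zeta))\bigr)^{N+k}$ with $N$ large, so $K_{N}$ and all its $\zeta$-derivatives up to order $N-1$ vanish on $\partial\Omega$: \emph{every} boundary term you write down is identically zero, for all $f$, whatever $\gamma$ is. This is exactly why the paper extends $K_{N}$ by $0$ outside $\Omega$ and works with $\Delta_{J}=\bigcirc_{j}(\partial_{z_{i_j}}+\partial_{\zeta_{i_j}})$ after the change of variable $\xi=z-\zeta$ (formula \eqref{eq:derivatrives-of-Tk(f)}): no boundary contributions ever appear, and nothing in the argument distinguishes the cases at that stage. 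Consequently your explanation of where the second gain $\frac{\frac{\gamma+1}{p}-(\gamma-\gamma')}{2n+\gamma'}$ comes from (trace operators "without anisotropic gain") cannot be correct.

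The real obstruction, which your proposal does not address, is this: when $\Delta_{J}$ hits the kernel rather than $\widetilde f$, each such derivative costs a factor $\bigl|\rho(\zeta)+\tfrac{1}{K_{0}}S(z,\zeta)\bigr|^{-1}\gtrsim\delta_{\Omega}(\zeta)^{-1}$ (Lemma \ref{lem:estimates-derivatives-Qi-in-kernel} and Proposition \ref{prop:final-estimate-kernel-J-derivated}). So $D_{J}^{z}T_{K}f$ is a sum of operators with kernels $K_{i,l}$ carrying an extra singularity $\delta_{\Omega}(\zeta)^{-i}$ applied to $D^{J-i}\widetilde f$, and the whole point is to absorb $\delta_{\Omega}^{-i}D^{J-i}f$ into $\|f\|_{L^{p}_{d}(\delta_{\Omega}^{\gamma})}$ via the weighted Hardy-type Lemma \ref{lem:weight-est-Lpk-0}. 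That lemma is valid for the full $d$ derivatives precisely under your conditions (1) or (2); under (3) it only works for $d-1$ of them, and the last derivative must be treated by the $\eta_{\zeta}^{*}$ integration-by-parts device of Wu used in the proof of Theorem \ref{thm:Theorem-Sobolev-Lp-k}, which is the actual source of the term $\frac{\frac{\gamma+1}{p}-(\gamma-\gamma')}{2n+\gamma'}$ in the minimum. Your final paragraph acknowledges that the anisotropic control of the differentiated kernels is "the delicate point" but defers it entirely; combined with the vacuous boundary-term mechanism, the proof of the case distinction and of the stated exponents is missing.
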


\section{\label{sec:Geometry-of-lineally-convex}Geometry of lineally convex
domains of finite type}

To simplify the reading of this paper we recall here the main properties
of the geometry of lineally convex domains of finite type taken from
\cite{Conrad_lineally_convex}, \cite{CDMb} and \cite{ChDu18}.

For $\zeta$ close to $\partial\Omega$ and $\varepsilon\leq\varepsilon_{0}$,
$\varepsilon_{0}$ small, define, for all unitary vector $v$,
\begin{equation}
\tau\left(\zeta,v,\varepsilon\right)=\sup\left\{ c\mbox{ such that }\rho\left(\zeta+\lambda v\right)-\rho(\zeta)<\varepsilon,\,\forall\lambda\in\mathbb{C},\,\left|\lambda\right|<c\right\} .\label{eq:definition-tau-general}
\end{equation}
Note that the lineal convexity hypothesis implies that the function
$\left(\zeta,v,\varepsilon\right)\mapsto\tau(\zeta,v,\varepsilon)$
is smooth. In particular, $\zeta\mapsto\tau(\zeta,v,\delta_{\Omega}(\zeta))$
is a smooth function. The pseudo-balls $B_{\varepsilon}(\zeta)$ (for
$\zeta$ close to the boundary of $\Omega$) of the homogeneous space
associated to the anisotropic geometry of $\Omega$ are
\begin{equation}
B_{\varepsilon}(\zeta)=\left\{ \xi=\zeta+\lambda u\mbox{ with }\left|u\right|=1\mbox{ and }\left|\lambda\right|<c_{0}\tau(\zeta,u,\varepsilon)\right\} \label{eq:def-pseudo-balls}
\end{equation}
where $c_{0}$ is chosen sufficiently small depending only on the
defining function $\rho$ of $\Omega$.

Let $\zeta$ and $\varepsilon$ be fixed. Then, an orthonormal basis
$\left(v_{1},v_{2},\ldots,v_{n}\right)$ is called \emph{$\left(\zeta,\varepsilon\right)$-extremal}
(or $\varepsilon$-\emph{extremal}, or simply \emph{extremal}) if
$v_{1}$ is the complex normal (to $\rho$) at $\zeta$, and, for
$i>1$, $v_{i}$ belongs to the orthogonal space of the vector space
generated by $\left(v_{1},\ldots,v_{i-1}\right)$ and minimizes $\tau\left(\zeta,v,\varepsilon\right)$
in the unit sphere of that space. In association to an extremal basis,
we denote
\begin{equation}
\tau(\zeta,v_{i},\varepsilon)=\tau_{i}(\zeta,\varepsilon).\label{eq:definition-tau-extremal-basis}
\end{equation}
so $i\rightarrow\tau_{i}(\zeta,\varepsilon)$ is increasing. Moreover
the finite type condition implies that
\begin{equation}
\tau_{1}(\zeta,\varepsilon)=\varepsilon\mathrm{\,\ and\,\ }\varepsilon^{1/2}\apprle\tau_{i}(\zeta,\varepsilon)\apprle\varepsilon^{1/m},\,i\geq1.\label{eq:estimation-tau-i-1-n}
\end{equation}

If $v$ is any unit vector and $\lambda\geq1$,
\[
\lambda^{\nicefrac{1}{m}}\tau(\zeta,v,\varepsilon)\leq\frac{1}{C}\tau(\zeta,v,\lambda\varepsilon)\leq C\lambda\tau(\zeta,v,\varepsilon),
\]
where $m$ is the type of $\Omega$.

Then we define the polydiscs $AP_{\varepsilon}(\zeta)$ by
\begin{equation}
AP_{\varepsilon}(\zeta)=AP(\zeta,\varepsilon)=\left\{ z=\zeta+\sum_{k=1}^{n}\lambda_{k}v_{k}\mbox{ such that }\left|\lambda_{k}\right|\leq c_{0}A\tau_{k}(\zeta,\varepsilon)\right\} .\label{eq:def-polydisk}
\end{equation}

$P_{\varepsilon}(\zeta)=P(\zeta,\varepsilon)$ being the corresponding
polydisc with $A=1$. We choose $c_{0}<1$ so that $z\in P_{\varepsilon}(\zeta)$
implies
\begin{equation}
\left|\delta_{\Omega}(z)-\delta_{\Omega}(\zeta)\right|\leq\varepsilon/2.\label{eq:dist-boundary-in-polydisc}
\end{equation}

\begin{rem*}
Note that there is neither unicity of the extremal basis $\left(v_{1},v_{2},\ldots,v_{n}\right)$
nor of associated polydisk $P_{\varepsilon}(\zeta)$. However the
polydisks associated to two different $\left(\zeta,\varepsilon\right)$-extremal
basis are equivalent. Thus in all the paper $P_{\varepsilon}(\zeta)$
will denote a polydisk associated to any $\left(\zeta,\varepsilon\right)$-extremal
basis and $\tau_{i}(\zeta,\varepsilon)$ the radius of $P_{\varepsilon}(\zeta)$.
\end{rem*}

Moreover there exists $C>0$ such that for $\zeta$ close to $\partial\Omega$
and $\varepsilon>0$ small (see \cite{Conrad_lineally_convex} and
\cite{CDMb}):

If $v$ is a unit vector then:

\begin{enumerate}
\item $z=\zeta+\lambda v\in P_{\varepsilon}(\zeta)$ implies $\left|\lambda\right|\leq C\tau(\zeta,v,\varepsilon)$,
\item $z=\zeta+\lambda v$ with $\left|\lambda\right|\leq\tau(\zeta,v,\varepsilon)$
implies $z\in CP_{\varepsilon}(\zeta)$.
\end{enumerate}
\medskip{}

\begin{sllem}
\label{lem:3.4-maj-deriv-rho-equiv-tho-i-z-zeta}For $z$ close to
$\partial\Omega$, $\varepsilon$ small and $\zeta\in P_{\varepsilon}(z)$
or $z\in P_{\varepsilon}(\zeta)$, we have, for all $1\leq i\leq n$:

\begin{enumerate}
\item $\tau_{i}(z,\varepsilon)=\tau\left(z,v_{i}\left(z,\varepsilon\right),\varepsilon\right)\simeq\tau\left(\zeta,v_{i}\left(z,\varepsilon\right),\varepsilon\right)$
where $\left(v_{i}\left(z,\varepsilon\right)\right)_{i}$ is a $\left(z,\varepsilon\right)$-extremal
basis;
\item $\tau_{i}(\zeta,\varepsilon)\simeq\tau_{i}(z,\varepsilon)$;
\item In the coordinate system $\left(z_{i}\right)$ associated to the $\left(z,\varepsilon\right)$-extremal
basis, $\left|\frac{\partial\rho}{\partial z_{i}}(\zeta)\right|\lesssim\frac{\varepsilon}{\tau_{i}}$
where $\tau_{i}$ is either $\tau_{i}\left(z,\varepsilon\right)$
or $\tau_{i}\left(\zeta,\varepsilon\right)$.
\end{enumerate}
\end{sllem}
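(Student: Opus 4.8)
This lemma records standard properties of the anisotropic geometry of $\Omega$; in one form or another they are in \cite{Conrad_lineally_convex}, \cite{CDMb} and \cite{ChDu18}, and the most economical proof simply quotes those references. For a self-contained argument the plan is to base everything on two facts. The first is the Taylor description $\tau(\zeta,v,\varepsilon)\simeq\min_{1\le k\le m}\left(\varepsilon/a_{k}(\zeta,v)\right)^{1/k}$, where $a_{k}(\zeta,v)$ denotes the sum of the moduli of the coefficients of the $k$-th order part of $\lambda\mapsto\rho(\zeta+\lambda v)-\rho(\zeta)$ at $\lambda=0$ (classical for lineally convex domains of finite type; the only subtlety, since $\rho$ is merely smooth, is that the Taylor remainder at order $m$ is $O(\tau^{m+1})=o(\varepsilon)$ by \eqref{eq:estimation-tau-i-1-n}). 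The second is the bound $\left|\frac{\partial^{\left|\alpha\right|+\left|\beta\right|}\rho}{\partial z^{\alpha}\partial\bar{z}^{\beta}}(z)\right|\lesssim\varepsilon\prod_{l}\tau_{l}(z,\varepsilon)^{-\alpha_{l}-\beta_{l}}$ for $1\le\left|\alpha\right|+\left|\beta\right|\le m$ in a $\left(z,\varepsilon\right)$-extremal coordinate system $\left(z_{l}\right)$ (for order $1$ the case $l=1$ is the normalization $\left|\partial\rho\right|\simeq1$, since $\tau_{1}(z,\varepsilon)=\varepsilon$, and the cases $l\ge2$ are trivial at the centre because $v_{2},\dots,v_{n}$ span $T_{z}^{1,0}\partial\Omega$). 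From these I would extract a \emph{slow variation principle}: a combination of derivatives of $\rho$ at $z$ satisfying a bound $\lesssim\varepsilon\prod_{l}\tau_{l}(z,\varepsilon)^{-n_{l}}$ satisfies the same bound at every $\zeta\in P_{\varepsilon}(z)$; indeed, expanding to order $m$ about $z$, each Taylor term is controlled by the derivative bounds together with $\left|\zeta_{l}-z_{l}\right|\le c_{0}\tau_{l}(z,\varepsilon)$, and the remainder is $o$ of the bound since $\tau_{l}(z,\varepsilon)\lesssim\varepsilon^{1/m}$. I will work under the hypothesis $\zeta\in P_{\varepsilon}(z)$; once (2) is proved, it and the two properties stated just before the lemma make $z\in P_{\varepsilon}(\zeta)$ equivalent to it up to the constant $C$, so the same conclusions follow.

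\emph{Assertions (1) and (3).} In $\tau_{i}(z,\varepsilon)=\tau(z,v_{i}(z,\varepsilon),\varepsilon)$ the equality is the definition \eqref{eq:definition-tau-extremal-basis}, so only the comparison must be proved. In the $z$-coordinates $a_{k}(\zeta,v_{i})=\sum_{j}\frac{1}{j!(k-j)!}\left|\frac{\partial^{k}\rho}{\partial z_{i}^{j}\partial\bar{z}_{i}^{k-j}}(\zeta)\right|$, which by the slow variation principle is $\lesssim\varepsilon\,\tau_{i}(z,\varepsilon)^{-k}$ for every $k$; feeding this into the Taylor description at $\zeta$ gives $\tau(\zeta,v_{i},\varepsilon)\gtrsim\tau_{i}(z,\varepsilon)$, and with the reverse inequality (see below) this is (1). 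The same computation with $v_{i}$ replaced by an arbitrary unit vector $v$, decomposed over the axes, yields $\tau(\zeta,v,\varepsilon)\simeq\tau(z,v,\varepsilon)$ for every $v$. For (3): granting (2) so that $\tau_{i}(z,\varepsilon)$ and $\tau_{i}(\zeta,\varepsilon)$ may be interchanged, apply the slow variation principle to $\frac{\partial\rho}{\partial z_{i}}$, which at $z$ equals $\simeq1=\varepsilon/\tau_{1}(z,\varepsilon)$ for $i=1$ and $0$ for $i\ge2$, and whose higher derivatives at $z$ are $\lesssim\varepsilon\,\tau_{i}(z,\varepsilon)^{-1}\prod_{l}\tau_{l}(z,\varepsilon)^{-\sigma_{l}}$; this gives $\left|\frac{\partial\rho}{\partial z_{i}}(\zeta)\right|\lesssim\varepsilon\,\tau_{i}(z,\varepsilon)^{-1}$, which is (3).

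\emph{Assertion (2).} Using $\tau(\zeta,v,\varepsilon)\simeq\tau(z,v,\varepsilon)$ for all $v$ together with the two properties preceding the lemma, the polydiscs $P_{\varepsilon}(z)$ and $P_{\varepsilon}(\zeta)$ are each contained in a fixed dilate of the other; since the radii of these polydiscs are the $\tau_{i}(\cdot,\varepsilon)$ and these are increasing in $i$, the two inclusions force $\tau_{i}(\zeta,\varepsilon)\simeq\tau_{i}(z,\varepsilon)$ for each $i$. (Alternatively one reads this off a Courant--Fischer-type min--max description of the $\tau_{i}$ and the direction-wise comparison just obtained.)

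\emph{The main obstacle.} Everything above except one step is bookkeeping of Taylor expansions of $\rho$; the exception is the reverse inequality $\tau(\zeta,v,\varepsilon)\lesssim\tau(z,v,\varepsilon)$ -- equivalently the symmetry $\zeta\in P_{\varepsilon}(z)\Rightarrow z\in P_{C\varepsilon}(\zeta)$ -- which is genuinely two-sided and is not delivered by the slow variation principle. I would establish it as in \cite{Conrad_lineally_convex} and \cite{CDMb}: writing $\zeta=z+\mu w$ with $\left|\mu\right|\lesssim\tau(z,w,\varepsilon)$, one compares $\rho$ along the common complex line through $z$ and $\zeta$, using $\left|\rho(z)-\rho(\zeta)\right|\lesssim\varepsilon$ (a consequence of \eqref{eq:dist-boundary-in-polydisc} and $\rho\simeq\pm\delta_{\Omega}$ near $\partial\Omega$) and the near-invariance of $\tau(\cdot,v,\varepsilon)$ under moving the base point along a line that remains inside the pseudo-ball.
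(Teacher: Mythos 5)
The paper gives no proof of this lemma: it is recalled verbatim from \cite{Conrad_lineally_convex}, \cite{CDMb} and \cite{ChDu18}, so your remark that the economical proof is a citation is exactly what the authors do. Your self-contained sketch follows the standard route of those references (Taylor-coefficient description of $\tau(\zeta,v,\varepsilon)$, derivative estimates of $\rho$ in $(z,\varepsilon)$-extremal coordinates, propagation of these bounds over $P_{\varepsilon}(z)$ with $c_{0}$ small, and the symmetry of the pseudodistance as the one genuinely two-sided ingredient), and I see no gap in it.
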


\begin{rem*}
In (1) above $\tau\left(\zeta,v_{i}\left(z,\varepsilon\right),\varepsilon\right)$
is not $\tau_{i}\left(\zeta,\varepsilon\right)$ because the extremal
basis at $z$ and $\zeta$ are different but (2) implies that these
quantities are equivalent.
\end{rem*}

We also define
\[
d(\zeta,z)=\inf\left\{ \varepsilon\mbox{ such that }z\in P_{\varepsilon}(\zeta)\right\} .
\]

The fundamental result here is that $d$ is a pseudo-distance.

Moreover the pseudo-balls $B_{\varepsilon}$ and the polydiscs $P_{\varepsilon}$
are equivalent in the sense that there exists a constant $K>0$ depending
only on $\Omega$ and $\rho$ such that
\begin{equation}
\frac{1}{K}P_{\varepsilon}(\zeta)\subset B_{\varepsilon}(\zeta)\subset KP_{\varepsilon}(\zeta)\label{eq:equivalence-pseudoballs-polydisk}
\end{equation}

so

\[
d(\zeta,z)\simeq\inf\left\{ \varepsilon\mbox{ such that }z\in B_{\varepsilon}(\zeta)\right\} 
\]
and 
\begin{equation}
d(\zeta,z)\simeq d(z,\zeta).\label{eq:symetry-pseudo-distance}
\end{equation}

Moreover 
\begin{equation}
\delta(z)+d(z,\zeta)\simeq\delta(\zeta)+d(z,\zeta).\label{eq:delta+pseudo-dist-symetric}
\end{equation}

\begin{sllem}
\label{lem:Lemma-5-1} For $z\in\Omega$, close to $\partial\Omega$,
$\delta$ small, $k=0,\ldots,n-1$ and $-1<\mu<1$,
\begin{equation}
\int_{P_{\delta}(z)}\frac{d\lambda(\zeta)}{\left|z-\zeta\right|^{2(n-k)-1+\mu}}\apprle_{\mu}\tau_{k+1}(z,\delta)^{1-\mu}\prod_{j=1}^{k}\tau_{j}^{2}(z,\delta).\label{eq:integral-mod(z-zeta)-inverse-power-1+mu}
\end{equation}

For $0<\mu<2$, $k=0,\ldots,n-1$, $\mathcal{T}$ being the real tangent
space to $\rho$ at the point $z$
\begin{multline}
\int_{P_{\delta}(z)\cap\mathcal{T}}\frac{d\sigma(\zeta)}{\left|z-\zeta\right|^{2(n-k-1)+\mu}}\apprle_{\mu}\\
\left\{ \begin{array}{cc}
\tau_{1}(z,\delta)\tau_{k+1}(z,\delta)^{2-\mu}\prod_{j=2}^{k}\tau_{j}(z,\delta)^{2} & \text{if }k\geq2\\
\tau_{1}(z,\delta)\tau_{2}(z,\delta)^{2-\mu} & \text{if }k=1\\
\tau_{1}(z,\delta)^{1-\mu} & \text{if }k=0\text{ and }0<\mu<1.
\end{array}\right.\label{eq:integral-mod(z-zeta)-inverse-power-1+mu-tangent}
\end{multline}

For $-1<\beta\leq0$, $k=1,\ldots,n-1$, $0\leq\mu<1$ and $\delta\geq\delta_{\Omega}(\zeta)$
\begin{equation}
\int_{P_{\delta}(\zeta)}\delta_{\Omega}(z)^{\beta}\frac{d\lambda(z)}{\left(\left|z-\zeta\right|^{2(n-k)-1+\mu}\right)}\lesssim_{\beta,\mu}\delta^{\beta}\tau_{k+1}\left(\zeta,\delta\right)^{1-\mu}\prod_{j=1}^{k}\tau_{j}^{2}\left(\zeta,\delta\right).\label{eq:3.16}
\end{equation}
\end{sllem}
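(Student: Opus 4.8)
The plan is to reduce all three inequalities to one‑variable power integrals by computing in a $(z,\delta)$‑extremal (resp.\ $(\zeta,\delta)$‑extremal) coordinate system and decomposing the polydisc dyadically according to the Euclidean distance to its centre. Fix a $(z,\delta)$‑extremal basis $(v_1,\dots,v_n)$ and write $\zeta=z+\sum_j\lambda_jv_j$, so that $|z-\zeta|^2=\sum_j|\lambda_j|^2$ and $P_\delta(z)=\{|\lambda_j|<c_0\tau_j(z,\delta)\}$ is a genuine polydisc. Intersecting $P_\delta(z)$ with the ball $\{|z-\zeta|<t\}$ factor by factor gives the only geometric input needed,
\[
\bigl|P_\delta(z)\cap\{|z-\zeta|<t\}\bigr|\ \lesssim\ \prod_{j=1}^{n}\min\bigl(\tau_j(z,\delta),t\bigr)^2 ,
\]
which, together with the monotonicity $\tau_1\le\tau_2\le\dots\le\tau_n$ from \eqref{eq:estimation-tau-i-1-n}, drives everything through the layer‑cake formula $\int_{P_\delta(z)}|z-\zeta|^{-s}\,d\lambda=\int_0^\infty s\,t^{-s-1}|P_\delta(z)\cap\{|z-\zeta|<t\}|\,dt$.

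For \eqref{eq:integral-mod(z-zeta)-inverse-power-1+mu} I would take $s=2(n-k)-1+\mu$, bound the layer‑cake integral by $\int_0^{C\tau_n}t^{-s-1}\prod_j\min(\tau_j,t)^2\,dt$, cut the $t$‑integral at the scales $\tau_1,\dots,\tau_n$, and on the band $\tau_\ell\le t\le\tau_{\ell+1}$ replace the numerator by $t^{2(n-\ell)}\prod_{j\le\ell}\tau_j^2$. Each band then contributes $\prod_{j\le\ell}\tau_j^2\int_{\tau_\ell}^{\tau_{\ell+1}}t^{\,2(k-\ell)-\mu}\,dt$, an elementary power integral that converges at $t=0$ (case $\ell=k=0$) because $\mu<1$ and whose value is, via $\tau_i\le\tau_{i+1}$ and $\mu>-1$, dominated by the single term $\tau_{k+1}(z,\delta)^{1-\mu}\prod_{j=1}^{k}\tau_j(z,\delta)^2$; the tail $t>C\tau_n$ is handled identically, and summing the $O(n)$ bands gives \eqref{eq:integral-mod(z-zeta)-inverse-power-1+mu}. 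The tangential estimate \eqref{eq:integral-mod(z-zeta)-inverse-power-1+mu-tangent} follows from the same scheme once one notes that, $v_1$ being the complex normal, $\partial\rho(z)$ has a $v_1$‑component of size $\simeq 1$, so in the extremal coordinates $\mathcal T$ is a Lipschitz graph over $\{\operatorname{Im}\lambda_1\}\times\mathbb C^{\,n-1}$ and the volume bound becomes $\min(\tau_1,t)\prod_{j\ge2}\min(\tau_j,t)^2$; running the layer cake with $s'=2(n-k-1)+\mu$ and cutting at the $\tau_\ell$'s produces exactly the three cases, with $\mu<2$ making the bands above $\tau_{k+1}$ sum, $\mu>0$ preventing a logarithmic divergence, and $\mu<1$ (for $k=0$) giving convergence at $t=0$.

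For \eqref{eq:3.16} I would work in a $(\zeta,\delta)$‑extremal system, use that $\delta_\Omega\simeq|\rho|$ near $\partial\Omega$ and that $\rho$ restricted to the complex normal line $\zeta+\mathbb Cv_1$ is a submersion with gradient $\simeq 1$ (uniformly, by the standing assumption that all the $\Omega_t$ are lineally convex of finite type), together with \eqref{eq:dist-boundary-in-polydisc} which gives $\delta_\Omega(z)\lesssim\delta$ on $P_\delta(\zeta)$. Decompose $P_\delta(\zeta)$ into the dyadic slabs $E_i=\{2^{-i-1}\delta<\delta_\Omega(z)\le2^{-i}\delta\}$; each $E_i$ has normal thickness $\simeq 2^{-i}\delta$, so $|E_i\cap\{|z-\zeta|<t\}|\lesssim\min(2^{-i}\delta,t)\,\min(\delta,t)\prod_{j\ge2}\min(\tau_j,t)^2$. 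Feeding this into the layer cake with $s=2(n-k)-1+\mu$ (here $k\ge1$, so no issue at $t=0$; the band $\tau_k\le t\le\tau_{k+1}$ is summed using $\mu<1$ exactly as before) yields
\[
\int_{E_i}\frac{d\lambda(z)}{|z-\zeta|^{s}}\ \lesssim\ 2^{-i}\,\tau_{k+1}(\zeta,\delta)^{1-\mu}\prod_{j=1}^{k}\tau_j(\zeta,\delta)^2 .
\]
Multiplying by $(2^{-i}\delta)^{\beta}\simeq\delta_\Omega(z)^{\beta}$ on $E_i$ and summing the geometric series $\sum_i 2^{-i(1+\beta)}$, which converges precisely because $\beta>-1$, gives \eqref{eq:3.16}. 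The heart of the argument — and the part most likely to be error‑prone — is the bookkeeping: in each case one must check that every one of the finitely many (for \eqref{eq:3.16}, infinitely many) pieces produced by the dyadic cutting is controlled by the \emph{single} monomial on the right‑hand side, which is exactly where the orderings $\tau_1\le\dots\le\tau_n$ and $\delta\ge\delta_\Omega(\zeta)$, and the sign restrictions on $\mu$ and $\beta$, are used in an essential way; the geometry itself enters only through the elementary volume bounds above and, for \eqref{eq:3.16}, the non‑degeneracy of $\partial\rho/\partial z_1$.
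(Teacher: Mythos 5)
Your argument is correct and is essentially the paper's own proof in a slightly different packaging: the paper also works in extremal coordinates and decomposes $P_\delta(z)$ into the region $E_0$ near $z$ plus dyadic annuli $E_j$ in the Euclidean distance of the last coordinates (which is exactly your layer-cake with cuts at the scales $\tau_\ell$), and for \eqref{eq:3.16} it integrates in the inward normal direction over $[0,2\delta]$ using $\beta>-1$ and then applies the tangential estimate \eqref{eq:integral-mod(z-zeta)-inverse-power-1+mu-tangent}, which is the continuous version of your dyadic slabs $E_i$ and the geometric series $\sum_i 2^{-i(1+\beta)}$. No gap; the two write-ups differ only in bookkeeping.
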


\begin{proof}
We use the coordinate $\xi=\left(\xi_{i}\right)_{1\leq i\leq n}$
system associated to a $\left(z,\delta\right)$-extremal basis and
we denote $\xi'=\left(\xi_{k+1},\ldots,\xi_{n}\right)$ and, if $k<n-1$,
$\xi''=\left(\xi_{k+2},\ldots,\xi_{n}\right)$.
\begin{itemize}
\item \emph{Proof of \eqref{eq:integral-mod(z-zeta)-inverse-power-1+mu}}:
Let
\[
E_{0}=\left\{ \zeta\in P_{\delta}(z)\mathrm{\,such\,that\:}\left|z'-\zeta'\right|\leq2\tau_{k+1}(z,\zeta)\right\} 
\]
and
\[
E_{j}=\left\{ \begin{array}{cc}
\left\{ \zeta\in P_{\delta}(z)\mathrm{\,such\,that\,}\left|z''-\zeta''\right|\in\left[2^{j-1}\tau_{k+1}(z,\delta),2^{j}\tau_{k+1}(z,\delta)\right[\right\} , & \mathrm{if\,}k<n-1\\
\emptyset, & \mathrm{if\,}k=n-1.
\end{array}\right.
\]
Note that (see \eqref{eq:dist-boundary-in-polydisc}) $P_{\delta}(z)=\cup_{j\geq0}E_{j}$.\\
Then
\[
\int_{E_{0}}\frac{d\lambda(\zeta)}{\left|z-\zeta\right|^{2(n-k)-1+\mu}}\apprle_{\mu}\tau_{k+1}(z,\delta)^{1-\mu}\prod_{j=1}^{k}\tau_{j}(z,\delta)^{2}
\]
and, if $k<n-1$,
\[
\int_{E_{j}}\frac{d\lambda(\zeta)}{\left|z-\zeta\right|^{2(n-k)-1+\mu}}\apprle_{\mu}\left(2^{j}\tau_{k+1}(z,\delta)\right)^{-1-\mu}\prod_{j=1}^{k+1}\tau_{j}(z,\delta)^{2},
\]
which proves \emph{\eqref{eq:integral-mod(z-zeta)-inverse-power-1+mu}.}
\item \emph{Proof of \eqref{eq:integral-mod(z-zeta)-inverse-power-1+mu-tangent}}:
analog to the proof of \eqref{eq:integral-mod(z-zeta)-inverse-power-1+mu}.
\item \emph{Proof of \eqref{eq:3.16}}: Let $\mathcal{T}$ be the real tangent
space to $\rho$ at $\zeta$ and $\nu$ the inward real normal to
$\rho$ at $\zeta$.\\
For $z\in P_{\delta}(\zeta)$, let us write $z=Z(z)+t(z)\nu$, $Z(z)\in\mathcal{T}$,
and let $t_{0}$ such that $Z(z)+t_{0}\nu=W(z)\in\partial\Omega$
so $z=W(z)+t\nu$ with $t=t(z)-t_{0}$. Then $\delta_{\Omega}(z)\simeq t$
and $\left|z-\zeta\right|\simeq\left|Z(z)-\zeta\right|+\left|t(z)\right|$
and, by hypothesis on $\delta$ and \eqref{eq:dist-boundary-in-polydisc},
$\delta_{\Omega}(z)\leq2\delta$, thus
\begin{multline*}
\int_{P_{\delta}(\zeta)}\delta_{\Omega}(z)^{\beta}\frac{d\lambda(z)}{\left(\left|z-\zeta\right|^{2(n-k)-1+\mu}\right)}\\
\apprle\int_{P_{\delta}(\zeta)\cap\mathcal{T}}\left(\int_{0}^{2\delta}\frac{t^{\beta}dt}{\left|W-t\nu-\zeta\right|^{2(n-k)-1+\mu}}\right)d\sigma\\
\apprle\int_{P_{\delta}(\zeta)\cap\mathcal{T}}\frac{\delta^{\beta+1}}{\left|Z-\zeta\right|^{2(n-k-1)+1+\mu)}}d\sigma(Z)\\
\apprle\begin{cases}
\delta^{\beta+1}\delta\tau_{k+1}(\zeta,\delta)^{2-(1+\mu)}\prod_{j=2}^{k}\tau_{j}(\zeta,\delta)^{2} & \mathrm{if}\,k\geq1\\
\delta^{\beta}\tau_{k+1}(\zeta,\delta)^{1-\mu} & \mathrm{if}\,k=0
\end{cases}\,\mathrm{by\,\eqref{eq:integral-mod(z-zeta)-inverse-power-1+mu-tangent}}\\
=\begin{cases}
\delta^{\beta}\tau_{k+1}(\zeta,\delta)^{1-\mu}\prod_{j=1}^{k}\tau_{j}(\zeta,\delta) & \mathrm{if}\,k\geq1,\\
\delta^{\beta}\tau_{k+1}(\zeta,\delta)^{1-\mu} & \mathrm{if}\,k=0,
\end{cases}
\end{multline*}
which ends the proof.
\end{itemize}
\end{proof}

\section{\label{sec:Definition-and-properties-solution-d-bar}Definition and
properties of a solution of the \texorpdfstring{$\bar{\partial}$}{\textpartial -bar}-equation}

To solve the $\overline{\partial}$-equation we use the method introduced
in \cite{CDMb}. Let us briefly recall it:

If $f$ is a smooth $\left(0,r\right)$-form $\overline{\partial}$-closed
on $\overline{\Omega}$, then
\[
f(z)=\left(-1\right)^{r+1}\overline{\partial_{z}}\left(\int_{\Omega}f(\zeta)\wedge K_{N}^{r}(z,\zeta)\right)-\int_{\Omega}f(\zeta)\wedge P_{N}^{r}(z,\zeta),
\]
where $K_{N}^{r}$ (resp. $P_{N}^{r}$) is the component of a kernel
$K_{N}$ ($N\geq1$), recalled in formula \eqref{eq:Kernel-KN-general}
below, of bi-degree $\left(0,r-1\right)$ in $z$ and $\left(n,n-r\right)$
in $\zeta$ (resp. the component of $P_{N}$ of be-degree $\left(0,r\right)$
in $z$ and $\left(n,n-r\right)$ in $\zeta$) constructed with the
method of \cite{BA82}, using the Diederich-Fornaess support function
constructed in \cite{Diederich-Fornaess-Support-Func-lineally-cvx}
(see also Theorem 2.2 of \cite{CDMb}) and the function $G(\xi)=\frac{1}{\xi^{N}}$
with a sufficiently large number $N$ (instead of $G(\xi)=\frac{1}{\xi}$
in formula (2.7) of \cite{CDMb}).

Then, the form $\int_{\Omega}f(\zeta)\wedge P_{N}^{r}(z,\zeta)$ is
$\overline{\partial}$-closed and the operator $T$ solving the $\overline{\partial}$-equation
in theorems \ref{thm:Theroem-without-derivative}, \ref{thm:Theorem-Sobolev-Lp-k}
and \ref{thm:Theorem-Sobolev-Lp-k-0-compact-supp} is defined by
\begin{equation}
Tf(z)=\left(-1\right)^{r+1}\int_{\Omega}f(\zeta)\wedge K_{N}^{r}(z,\zeta)-\overline{\partial}^{*}\mathcal{N}\left(\int_{\Omega}f(\zeta)\wedge P_{N}^{r}(z,\zeta)\right),\label{eq:def-op-solv-d-bar}
\end{equation}
where $\overline{\partial}^{*}\mathcal{N}$ is the canonical solution
of the $\overline{\partial}$-equation derived from the theory of
the $\overline{\partial}$-Neumann problem on pseudoconvex domains
of finite type.

By \cite[Lemma 3.2]{ChDu18} and Sobolev lemma we get (see \cite[Lemma 3.3]{ChDu18}):
\begin{sllem}
\label{lem:Sol-d-bar-Neumann-PN}Let $r\geq1$, $-1<\gamma$, $d\in\mathbb{N}$.
Let $f$ be a $\overline{\partial}$-closed $\left(0,r\right)$-form
with coefficients in $L^{1}(\Omega,\delta_{\Omega}^{\gamma})$ and
let $g=\int_{\Omega}f(\zeta)\wedge P_{N}^{r}(z,\zeta)$. Then for
$N\geq N_{0}=N_{0}(d)$ $\overline{\partial}^{*}\mathcal{N}(g)$ is
a solution of the equation $\overline{\partial}u=g$ satisfying $\left\Vert \overline{\partial}^{*}\mathcal{N}(g)\right\Vert _{\mathcal{C}^{d}(\overline{\Omega})}\leq C_{k,N}\left\Vert f\right\Vert _{L^{1}\left(\Omega,\delta^{\gamma}\right)}$.
\end{sllem}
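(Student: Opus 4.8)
The plan is to decouple the statement into two independent ingredients --- the smoothness of $g$ itself up to the boundary, and the global $L^{2}$-Sobolev regularity of the $\overline{\partial}$-Neumann solution operator $\overline{\partial}^{*}\mathcal{N}$ on $\Omega$ --- and then to pass between the Hölder and Sobolev scales through a single high-order Sobolev embedding.

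First I would invoke \cite[Lemma 3.2]{ChDu18}: for each integer $s$ there is $N_{0}(s)$ so that, as soon as $N\geq N_{0}(s)$, all $z$-derivatives of the kernel $P_{N}^{r}(z,\zeta)$ of order $\leq s$ are bounded, uniformly in $z\in\overline{\Omega}$, by $C_{s,N}\,\delta_{\Omega}(\zeta)^{-\gamma}$. Integrating this estimate against $\left|f(\zeta)\right|$ over $\Omega$ then shows that $g=\int_{\Omega}f(\zeta)\wedge P_{N}^{r}(\cdot,\zeta)$ lies in $\mathcal{C}^{s}(\overline{\Omega})$ with $\left\Vert g\right\Vert _{\mathcal{C}^{s}(\overline{\Omega})}\lesssim_{s,N}\left\Vert f\right\Vert _{L^{1}(\Omega,\delta_{\Omega}^{\gamma})}$. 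The mechanism behind that lemma is that, in the kernel recalled in Section \ref{sec:Definition-and-properties-solution-d-bar} (built with $G(\xi)=\xi^{-N}$ and the Diederich--Fornaess support function of \cite{Diederich-Fornaess-Support-Func-lineally-cvx}), a $z$-derivative hitting $P_{N}^{r}$ either lands on the support function --- harmless, since it is essentially holomorphic in $z$ to high order --- or raises the power in the denominator, the latter being absorbed by enlarging $N$, so that the surviving singularity near the boundary diagonal stays integrable.

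Next I would bring in the $\overline{\partial}$-Neumann theory. Since a lineally convex domain of finite type $\leq m$ is in particular pseudoconvex of finite type, Catlin's subelliptic estimates hold, hence $\mathcal{N}$ is continuous on the $L^{2}$-Sobolev spaces $W^{s}_{(0,\bullet)}(\Omega)$ for every $s\geq0$, and therefore so is the canonical solution operator $\overline{\partial}^{*}\mathcal{N}\colon W^{s}_{(0,r)}(\Omega)\to W^{s}_{(0,r-1)}(\Omega)$. As recalled in Section \ref{sec:Definition-and-properties-solution-d-bar} the form $g$ is $\overline{\partial}$-closed, and it is square-integrable by the previous step, so $u=\overline{\partial}^{*}\mathcal{N}(g)$ is indeed the canonical solution of $\overline{\partial}u=g$. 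Finally I would fix an integer $s>d+n$ (so that $W^{s}(\Omega)\hookrightarrow\mathcal{C}^{d}(\overline{\Omega})$, the real dimension of $\Omega$ being $2n$) and set $N_{0}=N_{0}(d):=N_{0}(s)$; then for $N\geq N_{0}$,
\[
\left\Vert \overline{\partial}^{*}\mathcal{N}(g)\right\Vert _{\mathcal{C}^{d}(\overline{\Omega})}\lesssim\left\Vert \overline{\partial}^{*}\mathcal{N}(g)\right\Vert _{W^{s}(\Omega)}\lesssim_{s}\left\Vert g\right\Vert _{W^{s}(\Omega)}\lesssim\left\Vert g\right\Vert _{\mathcal{C}^{s}(\overline{\Omega})}\lesssim_{d,N}\left\Vert f\right\Vert _{L^{1}(\Omega,\delta_{\Omega}^{\gamma})},
\]
which is the asserted estimate.

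The only genuinely technical input is the first step --- controlling \emph{all} $z$-derivatives of $P_{N}^{r}$, including the normal ones at the boundary diagonal, so that they become integrable against $\delta_{\Omega}^{-\gamma}$ once $N$ is large --- and this is exactly the content of \cite[Lemma 3.2]{ChDu18}, so in the present paper it can simply be quoted. Granting it, together with the (classical) global Sobolev regularity of $\mathcal{N}$ on finite type pseudoconvex domains, the remainder of the argument is just the displayed chain of inequalities.
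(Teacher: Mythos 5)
Your proposal is correct and follows essentially the same route as the paper, which dispatches this lemma in one line by citing \cite[Lemma 3.2]{ChDu18} for the $\mathcal{C}^{s}(\overline{\Omega})$ bound on $g$ in terms of $\left\Vert f\right\Vert _{L^{1}(\Omega,\delta_{\Omega}^{\gamma})}$ and then the Sobolev lemma together with the global $W^{s}$ regularity of $\overline{\partial}^{*}\mathcal{N}$ on pseudoconvex domains of finite type (see \cite[Lemma 3.3]{ChDu18}). One small slip in your first step: to dominate $\int_{\Omega}\left|f(\zeta)\right|\left|D_{z}^{s}P_{N}^{r}(z,\zeta)\right|d\lambda(\zeta)$ by $\int_{\Omega}\left|f\right|\delta_{\Omega}^{\gamma}d\lambda$ you need $\left|D_{z}^{s}P_{N}^{r}(z,\zeta)\right|\lesssim\delta_{\Omega}(\zeta)^{+\gamma}$, i.e.\ that $\left|D_{z}^{s}P_{N}^{r}(z,\zeta)\right|\delta_{\Omega}(\zeta)^{-\gamma}$ is bounded --- not that the derivative itself is bounded by $\delta_{\Omega}(\zeta)^{-\gamma}$, which for $\gamma>0$ would be too weak since $L^{1}(\Omega,\delta_{\Omega}^{\gamma})$ then strictly contains $L^{1}(\Omega)$; this is exactly the vanishing supplied by the factor $\rho(\zeta)^{N+k}$ for $N$ large.
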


\medskip{}

Finally the proofs of our theorems are reduced to the proofs of estimates
for the operator $T_{K}$ defined by
\begin{equation}
T_{K}(f)=\int_{\Omega}f(\zeta)\wedge K_{N}^{r}(z,\zeta).\label{eq:operator-Tk}
\end{equation}

Let us now recall (from \cite{CDMb}) the explicit formula for the
kernel $K_{N}$:

Let $S_{0}(z,\zeta)$ be the holomorphic support function of Diederich-Fornaess
\cite{Diederich-Fornaess-Support-Func-lineally-cvx} (see also Theorem
2.2 of \cite{CDMb}). Define two $\mathcal{C}^{\infty}$ functions
$\chi_{1}(z,\zeta)=\hat{\chi}\left(\left|z-\zeta\right|\right)$ and
$\chi_{2}(z)=\tilde{\chi}\left(\delta_{\partial\Omega}(z)\right)$
($\delta_{\partial\Omega}$ denoting the distance to the boundary
of $\Omega$) where $\hat{\chi}$ and $\tilde{\chi}$ are $\mathcal{C}^{\infty}$
functions, $0\leq\hat{\chi},\,\tilde{\chi}\leq1$, such that $\hat{\chi}\equiv1$
on $\left[0,\nicefrac{R}{2}\right]$ and $\hat{\chi}\equiv0$ on $\left[R,+\infty\right[$
and $\tilde{\chi}\equiv1$ on $\left[0,\nicefrac{\eta_{1}}{2}\right]$
and $\tilde{\chi}\equiv0$ on $\left[\eta_{0},+\infty\right[$, where
$R$ and $\eta_{1}$ are small enough. Then we define 
\[
\chi(z,\zeta)=\chi_{1}(z,\zeta)\chi_{2}(\zeta)
\]
and
\[
S(z,\zeta)=\chi(z,\zeta)S_{0}(z,\zeta)-\left(1-\chi(z,\zeta)\right)\left|z-\zeta\right|^{2}=\sum_{i=1}^{n}Q_{i}(z,\zeta)\left(z_{i}-\zeta_{i}\right).
\]

Define the two forms $s$ and $Q$ used in \cite{BA82} in the construction
of the Kopelman formula by
\[
s(z,\zeta)=\sum_{i=1}^{n}\left(\overline{\zeta_{i}}-\overline{z_{i}}\right)d\left(\zeta_{i}-z_{i}\right)
\]
and
\[
Q(z,\zeta)=\frac{1}{K_{0}\rho(\zeta)}\sum_{i=1}^{n}Q_{i}(z,\zeta)d\left(\zeta_{i}-z_{i}\right),
\]
where $K_{0}$ is a large constant chosen so that \cite[formula (3.12) p. 204]{ChDu18}
\[
\Re\mathrm{e}\left(\rho(\zeta)+\frac{1}{K_{0}}S(z,\zeta)\right)<\frac{\rho(\zeta)}{2}.
\]

The $Q_{i}(z,\zeta)$ are $\mathcal{C}^{\infty}$ in $\overline{\Omega}\times\overline{\Omega}$
and satisfy the following estimates (\cite[Lemma 3.6]{ChDu18}):
\begin{sllem}
\label{lem:Estimates-Qi-and-derivates}For $z_{0}$ close to $\partial\Omega$,
$\varepsilon$ small and $z,\,\zeta\in P_{\varepsilon}(z_{0})$, in
the coordinate system $\left(\zeta_{i}\right)$ associated to a $\left(z_{0},\varepsilon\right)$-extremal
basis, we have:

\begin{enumerate}
\item $\left|Q_{i}(z,\zeta)\right|+\left|Q_{i}(\zeta,z)\right|\lesssim\frac{\varepsilon}{\tau_{i}}$,
\item $\left|\frac{\partial Q_{i}(z,\zeta)}{\partial\overline{\zeta_{j}}}\right|+\left|\frac{\partial Q_{i}(z,\zeta)}{\partial\zeta_{j}}\right|+\left|\frac{\partial Q_{i}(z,\zeta)}{\partial z_{j}}\right|\lesssim\frac{\varepsilon}{\tau_{i}\tau_{j}}$,
\end{enumerate}
where $\tau_{i}$ are either $\tau_{i}(z,\varepsilon)$, $\tau_{i}(\zeta,\varepsilon)$
or $\tau_{i}\left(z_{0},\varepsilon\right)$.
\end{sllem}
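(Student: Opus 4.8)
The plan is to reduce the whole statement to derivative bounds for the Diederich--Fornaess support function $S_{0}$ in the extremal frame, since the $Q_{i}$ are nothing but its Hefer coefficients. First I would check that, under the hypotheses $z_{0}$ close to $\partial\Omega$, $\varepsilon$ small, $z,\zeta\in P_{\varepsilon}(z_{0})$, the cutoffs are inert: by \eqref{eq:estimation-tau-i-1-n} the radii of $P_{\varepsilon}(z_{0})$ are $\lesssim\varepsilon^{1/m}$, so $|z-\zeta|\lesssim\varepsilon^{1/m}\le R/2$ for $\varepsilon$ small, whence $\chi_{1}(z,\zeta)=1$; and by \eqref{eq:dist-boundary-in-polydisc} $\delta_{\partial\Omega}(\zeta)\le\delta_{\partial\Omega}(z_{0})+\varepsilon/2\le\eta_{1}/2$, whence $\chi_{2}(\zeta)=1$. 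Thus $\chi\equiv1$, so $S=S_{0}$ and $Q_{i}=Q_{i}^{0}$ on $P_{\varepsilon}(z_{0})\times P_{\varepsilon}(z_{0})$, the $Q_{i}^{0}$ being the Hefer coefficients of $S_{0}$ alone; in particular neither a derivative of a cutoff nor the term $(1-\chi)|z-\zeta|^{2}$ is ever produced in the region of interest.

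Next I would invoke the construction of $S_{0}$ recalled in \cite[Theorem 2.2]{CDMb} (following \cite{Diederich-Fornaess-Support-Func-lineally-cvx}): $z\mapsto S_{0}(z,\zeta)$ is holomorphic, $S_{0}(\zeta,\zeta)=0$, and $S_{0}$ is a controlled second-order modification of $\rho$, so its Hefer coefficients may be written $Q_{i}^{0}(z,\zeta)=\int_{0}^{1}\frac{\partial S_{0}}{\partial z_{i}}\bigl(\zeta+t(z-\zeta),\zeta\bigr)\,dt$. The analytic core is the claim that, in the $(z_{0},\varepsilon)$-extremal coordinates and for every $w\in CP_{\varepsilon}(z_{0})$,
\[
\Bigl|\frac{\partial S_{0}}{\partial z_{i}}(w,\zeta)\Bigr|\lesssim\frac{\varepsilon}{\tau_{i}},\qquad\Bigl|\partial_{j}\frac{\partial S_{0}}{\partial z_{i}}(w,\zeta)\Bigr|\lesssim\frac{\varepsilon}{\tau_{i}\tau_{j}},
\]
where $\partial_{j}$ is any of $\partial/\partial z_{j}$, $\partial/\partial\zeta_{j}$, $\partial/\partial\overline{\zeta_{j}}$. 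The first bound mirrors Lemma~\ref{lem:3.4-maj-deriv-rho-equiv-tho-i-z-zeta}(3) for $\rho$ itself; the quadratic corrections in $S_{0}$ cost a second derivative of $\rho$ (of size $\lesssim\varepsilon/(\tau_{i}\tau_{j})$) against a factor $|w_{j}-\zeta_{j}|\lesssim\tau_{j}$, hence are again $\lesssim\varepsilon/\tau_{i}$, and the second bound is obtained the same way one order higher.

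Granting the display, part (1) follows by integrating the first inequality over $t\in[0,1]$ and part (2) by differentiating under the Hefer integral (the chain-rule factors from $w=\zeta+t(z-\zeta)$ contributing only a bounded $t\le1$) and using the second inequality; the estimate for $Q_{i}(\zeta,z)$ is analogous since the hypotheses are symmetric in $z$ and $\zeta$. The freedom to read $\tau_{i}$ as $\tau_{i}(z,\varepsilon)$, $\tau_{i}(\zeta,\varepsilon)$ or $\tau_{i}(z_{0},\varepsilon)$ is then justified geometrically: in extremal coordinates a polydisc is a genuine product of discs, hence convex, so $[z,\zeta]\subset CP_{\varepsilon}(z_{0})$ and Lemma~\ref{lem:3.4-maj-deriv-rho-equiv-tho-i-z-zeta}(2) makes all the $\tau_{j}$'s that occur comparable along the segment.

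I expect the main obstacle to be exactly the displayed derivative estimates for $S_{0}$ in the extremal frame: this is where the finite-type geometry of lineally convex domains genuinely enters, and it has to be read off from the Diederich--Fornaess construction of \cite{CDMb} together with the second-order analogue of Lemma~\ref{lem:3.4-maj-deriv-rho-equiv-tho-i-z-zeta}(3). Everything after that is routine bookkeeping with the polydisc geometry.
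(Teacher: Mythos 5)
Your proposal is sound, but note that the paper itself offers no proof of this lemma: it is quoted verbatim from \cite[Lemma 3.6]{ChDu18}, which in turn rests on the constructions in \cite{CDMb} and \cite{Diederich-Fornaess-Support-Func-lineally-cvx} and on Conrad's derivative estimates for $\rho$ in extremal coordinates. Your reconstruction is consistent with how those references actually establish the estimates: the cutoff-inertness step is correct (on $P_{\varepsilon}(z_{0})\times P_{\varepsilon}(z_{0})$ one has $\chi\equiv1$, so $S=S_{0}$ and no derivative ever falls on $\chi$ or on the $(1-\chi)\left|z-\zeta\right|^{2}$ term), and the reduction of the $\tau_{i}(z,\varepsilon)\simeq\tau_{i}(\zeta,\varepsilon)\simeq\tau_{i}(z_{0},\varepsilon)$ freedom to Lemma \ref{lem:3.4-maj-deriv-rho-equiv-tho-i-z-zeta}(2) is exactly right. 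Two caveats. First, you assume $Q_{i}$ is the Hefer integral $\int_{0}^{1}\partial_{z_{i}}S_{0}(\zeta+t(z-\zeta),\zeta)\,dt$; the decomposition $S_{0}=\sum Q_{i}(z_{i}-\zeta_{i})$ is not unique, and in \cite{CDMb} the $Q_{i}$ are read off from the explicit polynomial form of the Diederich--Fornaess function (a sum $\sum_{\left|\alpha\right|\leq2m}c_{\alpha}D^{\alpha}\rho(\zeta)(z-\zeta)^{\alpha}$-type expression), so you must either check that your integral representation coincides with their choice or run the same estimate monomial by monomial --- the bounds are identical either way. Second, the corrections to $\partial\rho(\zeta)\cdot(z-\zeta)$ in $S_{0}$ are not merely quadratic but go up to order comparable to the type $m$, so the ``second-order analogue'' of Lemma \ref{lem:3.4-maj-deriv-rho-equiv-tho-i-z-zeta}(3) you invoke must really be the full anisotropic estimate $\left|D^{\alpha}\rho\right|\lesssim\varepsilon\prod\tau_{j}^{-\alpha_{j}}$ for all multi-indices; each extra derivative is then absorbed against a factor $\left|w_{j}-\zeta_{j}\right|\lesssim\tau_{j}$ exactly as you describe. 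With those two points made precise, your argument is the standard one and fills in what the paper leaves to the citation.
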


Moreover we have (\cite[Formula (3.12) and Lemma 3.5]{ChDu18})
\begin{equation}
\left|\rho(\zeta)+\frac{1}{K_{0}}S(z,\zeta)\right|\gtrsim\left|\rho(\zeta)\right|+\left|\rho(z)\right|+d(z,\zeta).\label{eq:min_denom_weight_bd+dist}
\end{equation}

Recall that if $\zeta\in\partial\Omega$, $\Re\mathrm{e}S(z,\zeta)<0$
for $z\in\Omega$, and, that $s$ satisfies
\[
\left|z-\zeta\right|^{2}=\left|\left\langle s,z-\zeta\right\rangle \right|\leq C\left|z-\zeta\right|,\,z,\zeta\in\Omega.
\]

Then the kernel $K_{N}$ is (formula (2.7) of \cite{CDMb})
\begin{equation}
K_{N}(z,\zeta)=C_{n}\sum_{k=0}^{n-1}\frac{(n-1)!}{k!}\left(\frac{\rho(\zeta)}{\frac{1}{K_{0}}S(z,\zeta)+\rho(\zeta)}\right)^{N+k}\frac{s(z,\zeta)}{\left|\zeta-z\right|^{2(n-k)}}\wedge\left(dQ\right)^{k}\wedge\left(ds\right)^{n-k-1}.\label{eq:Kernel-KN-general}
\end{equation}

\bigskip{}

The main articulations of the proofs are as follows:

The proof of Theorem \ref{thm:Theroem-without-derivative} is given
in Section \ref{sec:Proof-of-Theorem-2-1} and the proofs of Theorems
\ref{thm:Theorem-Sobolev-Lp-k} and \ref{thm:Theorem-Sobolev-Lp-k-0-compact-supp}
are given in next sections.

To estimate derivatives of $T_{K}(f)$ we note that, for $z\in\Omega$,
the coefficients of $\zeta\rightarrow K_{N}(z,\zeta)$ and their derivatives
up to order $N-1$ are smooth in $\overline{\Omega}\setminus\left\{ z\right\} $
and equal to $0$ when $\zeta\in\partial\Omega$. Thus we define $\widetilde{K}_{N}(z,\zeta)$
the kernel obtained extending the coefficients of $K_{N}$ by $0$
if $(z,\zeta)\in\Omega\times\left(\mathbb{C}^{n}\setminus\overline{\Omega}\right)$.
Then $T_{K}(f)=\int_{\mathbb{C}^{n}}f(\zeta)\wedge\widetilde{K}_{N}^{r}(z,\zeta)$
and, making the change of variable $\zeta-z=\xi$ we can derivate
$T_{K}(f)$ (less than $N-1$ times) without derivating the expression
$\frac{1}{\left|z-\zeta\right|^{2(n-k)}}$. Moreover we use an other
trick (inspired by \cite{wu-convex-98}) on the derivatives of $\frac{1}{K_{0}}S(z,\zeta)+\rho(\zeta)$.

Finally the proofs are done using the following Lemma which is easily
obtained reading the proof of \cite[Lemma 3.10]{ChDu18} (see also
\cite[Appendix B]{RM86}):
\begin{sllem}
\label{lem:kernel-operator-weighted-estimate-Lp-Lq}Let $\Omega$
be a smoothly bounded domain in $\mathbb{C}^{n}$. Let $\mu$ and
$\nu$ be two positive measures on $\Omega$. Let $K$ be a measurable
function on $\Omega\times\Omega$ and let us consider the linear operator
$T$ defined by
\[
Tf(z)=\int_{\Omega}K(z,\zeta)f(\zeta)d\mu(\zeta).
\]

Denote $\delta_{\Omega}$ the distance to the boundary of $\Omega$.
Assume that there exists a real number $s\geq1$ such that:

\begin{enumerate}
\item If there exists a positive number $\kappa_{0}>0$ and, for all $0<\kappa\leq\kappa_{0}$,
a positive constant $C_{\kappa}$ such that,
\[
\int_{\Omega}\left|K(z,\zeta)\right|^{s}\delta_{\Omega}(\zeta)^{-\kappa}d\mu(\zeta)\leq C_{\kappa}\delta_{\Omega}(z)^{-\kappa},\,z\in\Omega
\]
 and
\[
\int_{\Omega}\left|K(z,\zeta)\right|^{s}\delta_{\Omega}(z)^{-\kappa}d\nu(z)\leq C_{\kappa}\delta_{\Omega}(\zeta)^{-\kappa},\,\zeta\in\Omega,
\]
then $T$ is bounded from $L^{p}\left(\Omega,\mu\right)$ to $L^{q}\left(\Omega,\nu\right)$
for all $1<p<\frac{s}{s-1}$ with $\frac{1}{q}=\frac{1}{p}-\frac{s-1}{s}$.
\item If there exists a positive constant $C$ such that $\int_{\Omega}\left|K(z,\zeta)\right|^{s}d\nu(z)\leq C$,
$\zeta\in\Omega$, then $T$ is bounded from $L^{1}\left(\Omega,\mu\right)$
to $L^{q}\left(\Omega,\nu\right)$ with $q=s$.
\end{enumerate}
\medskip{}
\end{sllem}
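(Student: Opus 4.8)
The plan is to treat the two items separately: item (2) is immediate, and item (1) follows by interpolating two \emph{weighted} endpoint estimates extracted from the two hypotheses. Since $\left|Tf(z)\right|\le\int_{\Omega}\left|K(z,\zeta)\right|\left|f(\zeta)\right|\,d\mu(\zeta)$, it is enough to prove every estimate for the operator with nonnegative kernel $\left|K\right|$ acting on $\left|f\right|$, so I assume $K\ge0$, $f\ge0$. Item (2) is then just Minkowski's integral inequality: $\left\Vert Tf\right\Vert _{L^{s}(d\nu)}=\left\Vert \int_{\Omega}K(\cdot,\zeta)f(\zeta)\,d\mu(\zeta)\right\Vert _{L^{s}(d\nu)}\le\int_{\Omega}f(\zeta)\left\Vert K(\cdot,\zeta)\right\Vert _{L^{s}(d\nu)}\,d\mu(\zeta)\le C^{1/s}\left\Vert f\right\Vert _{L^{1}(d\mu)}$, using $\int_{\Omega}\left|K(z,\zeta)\right|^{s}d\nu(z)\le C$.

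For item (1), write $s'=s/(s-1)$, so the admissible range is $1<p<s'$. From the hypotheses I read off two endpoint estimates, each carrying a free parameter in $(0,\kappa_{0}]$. First, applying the hypothesis on the $\nu$-integral with parameter $\kappa_{1}$ together with Minkowski's inequality exactly as above gives $\left\Vert Tf\right\Vert _{L^{s}(\delta_{\Omega}^{-\kappa_{1}}d\nu)}\le C_{\kappa_{1}}^{1/s}\left\Vert f\right\Vert _{L^{1}(\delta_{\Omega}^{-\kappa_{1}/s}d\mu)}$. Second, applying the hypothesis on the $\mu$-integral with parameter $\kappa_{2}$, after writing $Kf=(K\,\delta_{\Omega}(\zeta)^{-\kappa_{2}/s})(f\,\delta_{\Omega}(\zeta)^{\kappa_{2}/s})$ and using H\"older with exponents $(s,s')$, gives the weighted $L^{s'}$--$L^{\infty}$ bound $\delta_{\Omega}(z)^{\kappa_{2}/s}\left|Tf(z)\right|\le C_{\kappa_{2}}^{1/s}\left\Vert f\right\Vert _{L^{s'}(\delta_{\Omega}^{\kappa_{2}s'/s}d\mu)}$ for every $z\in\Omega$.

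Now I fix $p\in(1,s')$, set $\theta:=s/p'\in(0,1)$ (where $p'=p/(p-1)$), so that $\frac{1}{p}=(1-\theta)+\frac{\theta}{s'}$, and choose $\kappa_{1}:=\kappa_{0}\theta$, $\kappa_{2}:=\kappa_{0}(1-\theta)$; these lie in $(0,\kappa_{0}]$ and satisfy $\kappa_{1}(1-\theta)=\kappa_{2}\theta$. Passing to the auxiliary operator $Sf:=\delta_{\Omega}^{\kappa_{2}/s}Tf$, the second endpoint becomes $\left\Vert Sf\right\Vert _{L^{\infty}(d\nu)}\le C_{\kappa_{2}}^{1/s}\left\Vert f\right\Vert _{L^{s'}(\delta_{\Omega}^{\kappa_{2}s'/s}d\mu)}$ with an \emph{unweighted} $L^{\infty}$ target, while the first becomes $\left\Vert Sf\right\Vert _{L^{s}(\delta_{\Omega}^{-\kappa_{1}-\kappa_{2}}d\nu)}\le C_{\kappa_{1}}^{1/s}\left\Vert f\right\Vert _{L^{1}(\delta_{\Omega}^{-\kappa_{1}/s}d\mu)}$. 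I then apply the Stein--Weiss interpolation theorem (Riesz--Thorin with change of densities) to $S$ between these two endpoints. The interpolated exponents are $\frac{1}{p}=(1-\theta)+\frac{\theta}{s'}$ and $\frac{1}{q}=\frac{1-\theta}{s}$, which, using $(s-1)s'=s$, satisfy $\frac{1}{q}=\frac{1}{p}-\frac{s-1}{s}$; and, once the substitution $S\mapsto T$ is undone, both the interpolated domain density and the interpolated target density equal $\delta_{\Omega}$ raised to the power $\bigl(\kappa_{2}\theta-\kappa_{1}(1-\theta)\bigr)/s=0$. Hence the interpolated estimate is precisely the unweighted bound $T\colon L^{p}(\mu)\to L^{q}(\nu)$, and as $\theta$ runs over $(0,1)$, $p$ runs over all of $(1,s')$, which is (1). (When $s=1$ one reads $L^{s'}=L^{\infty}$ throughout; that case can alternatively be obtained from the classical weighted Schur test with Schur weight $\delta_{\Omega}^{-\beta}$, $\beta>0$ small, applied with the two hypotheses for $\kappa=\beta p'$ and $\kappa=\beta p$.)

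The computations behind the two endpoints and the bookkeeping of exponents are routine. The main obstacle I anticipate is the $L^{\infty}$ endpoint in the interpolation: a \emph{weighted} $L^{\infty}$ space is not of the form $L^{\infty}(v\,d\nu)$, which is why I interpolate the auxiliary operator $S=\delta_{\Omega}^{\kappa_{2}/s}T$ (whose $L^{\infty}$ target is honestly unweighted) and translate back afterwards. The structural point that makes the scheme close is that the small exponent $\kappa_{0}$ may be split between the two endpoints in the ratio $\theta:(1-\theta)$ dictated by $p$, and that for exactly this ratio the weight exponents produced by interpolation on the two sides coincide and cancel; this is also why the conclusion is the \emph{open} interval $1<p<s'$, with $p=1$ and $p=s'$ excluded.
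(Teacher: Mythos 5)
Your proposal is correct. Note that the paper does not prove this lemma at all: it refers to the proof of Lemma~3.10 of \cite{ChDu18} (and to Appendix~B of \cite{RM86}), where the argument is the classical generalized Schur test, i.e.\ a direct application of H\"older's inequality after splitting $\left|K\right|=\left|K\right|^{s/q}\left|K\right|^{s/p'}$ (using $\frac{s}{q}+\frac{s}{p'}=1$, which is exactly the relation $\frac{1}{q}=\frac{1}{p}-\frac{s-1}{s}$) and inserting compensating powers $\delta_{\Omega}(\zeta)^{\pm a}$ so that the two $\kappa$-weighted hypotheses close the estimate. Your route is a repackaging of the same mechanism as a Stein--Weiss interpolation with change of densities between the two weighted endpoints $L^{1}\to L^{s}$ and $L^{s'}\to L^{\infty}$; the exponent bookkeeping is right ($\theta=s/p'$, the relation $\kappa_{1}(1-\theta)=\kappa_{2}\theta$ is exactly what kills the weights on both sides, and $\frac{1}{q_\theta}=\frac{1}{p_\theta}-\frac{s-1}{s}$), the two endpoints are correctly extracted (Minkowski for the $L^{1}\to L^{s}$ side, H\"older with $(s,s')$ for the $L^{s'}\to L^{\infty}$ side), and your device of passing to $S=\delta_{\Omega}^{\kappa_{2}/s}T$ so that the upper endpoint is an honest unweighted $L^{\infty}(\nu)$ correctly sidesteps the only delicate point (a weighted $L^{\infty}$ target is not covered by the change-of-density theorem); since $q_{0}=s\neq q_{1}=\infty$ the interpolation theorem applies. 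What the interpolation phrasing buys is a cleaner separation of the two hypotheses and transparent constants; what the direct Schur-test proof buys is self-containedness (it is, in effect, the by-hand proof of the very interpolation step you invoke) and it handles $s=1$ without the separate remark you need. Item (2) via Minkowski is exactly the expected argument.
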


For $0\leq k\leq n-1$ and $f$ a $\left(0,r\right)$ form ($1\leq r\leq n-1$),
let us denote 
\begin{multline*}
T_{k}f(z)=\frac{(n-1)!}{k!}\int_{\Omega}\left(\frac{\rho(\zeta)}{\frac{1}{K_{0}}S(z,\zeta)+\rho(\zeta)}\right)^{N+k}\\
\frac{s(z,\zeta)}{\left|\zeta-z\right|^{2(n-k)}}\wedge\left(dQ\right)^{k}\wedge\left(ds\right)^{n-k-1}\wedge f(\zeta)=\int_{\Omega}K_{k}(z,\zeta)\wedge f(\zeta)
\end{multline*}
so that $T_{K}f=\sum_{k=0}^{n-1}C_{n}T_{k}f$.

\section{\label{sec:Proof-of-Theorems}Proofs of Theorems}

As sharp estimates of the coefficients of $K_{N}$ using the finite
type hypothesis are made only when $\zeta$ and $z$ are close and
close to $\partial\Omega$, we consider a recovering of $\Omega$
as follows: let $\varepsilon_{0}$ be a small positive number, $P_{l}$,
$1\leq l\leq M$, points in $\partial\Omega$, $V_{l}=B\left(P_{l},\varepsilon_{0}\right)$,
$1\leq l\leq M$, the euclidean balls centered at $P_{l}$ and radius
$\varepsilon_{0}$ and $V_{0}$ an open set relatively compact in
$\Omega$ such that $\left\{ V_{l},\,0\leq l\leq M\right\} $ is an
open covering of $\overline{\Omega}$. Precisely $\varepsilon_{0}$
is chosen sufficiently small so that:
\begin{itemize}
\item For $z$ and $\zeta$ in the balls $2V_{l}=B\left(P_{l},2\varepsilon_{0}\right)$,
$1\leq l\leq M$, all the properties and estimates associated to the
finite type hypothesis are valid (see \eqref{eq:definition-tau-general}
to \eqref{eq:3.16} and Lemma \ref{lem:Estimates-Qi-and-derivates})
including Lemmas \ref{lem:estimates-derivatives-Qi-in-kernel} to
\ref{lem:Normal-derivative-weight-Wu};
\item For $z$ and $\zeta$ in $2V_{l}\cap\Omega$, $1\leq l\leq M$, $Q(z,\zeta)$
is holomorphic in the $z$ variable.
\end{itemize}
Then we choose smooth cut-off functions $\chi_{l}\in\mathcal{C}_{0}^{\infty}\left(V_{l}\right)$,
$\sum_{l=0}^{M}\chi_{l}\equiv1$ on $\overline{\Omega}$ and we write
\[
T_{k}f(z)=\sum_{l=0}^{M}\int_{\Omega}K_{k}(z,\zeta)\chi_{l}(\zeta)\wedge f(\zeta)=\sum_{l=0}^{M}T_{k}^{l}f(z)
\]
so
\[
T_{K}f(z)=\sum_{l=0}^{M}\sum_{k=0}^{n-1}C_{n}T_{k}^{l}f(z).
\]

By standard regularization procedure, to prove Theorem \ref{thm:Theroem-without-derivative},
Theorem \ref{thm:Theorem-Sobolev-Lp-k} and Theorem \ref{thm:Theorem-Sobolev-Lp-k-0-compact-supp}
under condition (2) or (3) (resp. Theorem \ref{thm:Theorem-Sobolev-Lp-k-0-compact-supp}
under condition (1)) we can assume that the coefficients of $f$ are
$\mathcal{C}^{\infty}$-smooth in $\overline{\Omega}$ (resp. in $\mathcal{D}(\Omega)$).
So in all the proofs below the coefficients of the form $f$ are in
$\mathcal{C}^{\infty}\left(\overline{\Omega}\right)$.

Moreover, for $l\geq1$, if $z\in2V_{l}$, using the holomorphic property
of $Q$, an elementary calculus shows that, in the above expression
of $T_{K}f(z)$, $T_{k}^{l}f(z)$ appears only for $k\leq n-r$ and
the kernel $K_{k}(z,\zeta)\chi_{l}(\zeta)$ is equal to
\begin{multline}
K_{k}(z,\zeta)\chi_{l}(\zeta)=\chi_{l}(\zeta)C_{n,k}\left(\frac{\rho(\zeta)}{\frac{1}{K_{0}}S(z,\zeta)+\rho(\zeta)}\right)^{N+k}\\
\frac{s(z,\zeta)}{\left|\zeta-z\right|^{2(n-k)}}\wedge\left(\overline{\partial}_{\zeta}Q(z,\zeta)\right)^{k}\wedge\left(\overline{\partial}_{\zeta}s(z,\zeta)\right)^{n-k-r}\wedge\left(\overline{\partial}_{z}s(z,\zeta)\right)^{r-1}\label{eq:K-l-l-z-zeta-closes}
\end{multline}
with $s(z,\zeta)=\sum_{i}\left(\overline{\zeta}_{i}-\overline{z}_{i}\right)d\zeta_{i}$
and $Q(z,\zeta)=\frac{1}{K_{0}\rho(\zeta)}\sum_{i}Q_{i}(z,\zeta)d\zeta_{i}$.
\begin{sllem}
\label{lem:estimates-derivatives-Qi-in-kernel}Let $D$ be a derivative
of order $j$ in $z$ and $\zeta$. Denote
\[
F(z,\zeta)=\rho(\zeta)\partial_{\overline{\zeta}_{a_{s}}}Q_{b_{s}}(z,\zeta)+Q_{b_{s}}(z,\zeta)\partial_{\overline{\zeta}_{a_{s}}}\rho(\zeta).
\]

Then for $z$ and $\zeta$ close and close to the boundary of $\Omega$
\begin{enumerate}
\item $\left|F(z,\zeta)\right|\apprle\frac{\alpha^{2}}{\tau_{a_{s}}\tau_{b_{s}}}$;
\item $\left|DF(z,\zeta)\right|\apprle\frac{1}{\left|\rho(\zeta)\right|}\frac{\alpha^{2}}{\tau_{a_{s}}\tau_{b_{s}}}$
if $j=1$;
\item $\left|DF(z,\zeta)\right|\apprle1\apprle\frac{1}{\left|\rho(\zeta)\right|^{2}}\frac{\alpha^{2}}{\tau_{a_{s}}\tau_{b_{s}}}$
for $j\geq2$;
\end{enumerate}
where $\alpha=\delta_{\Omega}(z)+d(z,\zeta)\simeq\delta_{\Omega}(\zeta)+d(z,\zeta)$
and $\tau_{i}=\tau_{i}(z,\alpha)\simeq\tau_{i}(\zeta,\alpha)$.
\end{sllem}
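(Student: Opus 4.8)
The plan is to exploit the structure of $F(z,\zeta)=\rho(\zeta)\partial_{\overline{\zeta}_{a_s}}Q_{b_s}(z,\zeta)+Q_{b_s}(z,\zeta)\partial_{\overline{\zeta}_{a_s}}\rho(\zeta)$, which is precisely $\partial_{\overline{\zeta}_{a_s}}\bigl(\rho(\zeta)Q_{b_s}(z,\zeta)\bigr)$, the antiholomorphic $\zeta$-derivative of the coefficient of the form $K_0\rho(\zeta)Q(z,\zeta)=\sum_i Q_i(z,\zeta)\rho(\zeta)\,d\zeta_i$ appearing in \eqref{eq:K-l-l-z-zeta-closes}; the key point is that multiplying $Q_{b_s}$ by $\rho(\zeta)$ cancels the singularity on $\partial\Omega$, so that $\rho Q_{b_s}$ and all its derivatives are bounded on $\overline{\Omega}\times\overline{\Omega}$. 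First I would prove (1): by Lemma~\ref{lem:Estimates-Qi-and-derivates}(1) applied with $\varepsilon=\alpha$ (legitimate since $z,\zeta$ close and close to $\partial\Omega$, using \eqref{eq:delta+pseudo-dist-symetric} and Lemma~\ref{lem:3.4-maj-deriv-rho-equiv-tho-i-z-zeta} so that $\tau_i(z,\alpha)\simeq\tau_i(\zeta,\alpha)$) one has $|Q_{b_s}|\lesssim \alpha/\tau_{b_s}$ and $|\partial_{\overline{\zeta}_{a_s}}Q_{b_s}|\lesssim \alpha/(\tau_{a_s}\tau_{b_s})$ by part (2) of that lemma, while $|\rho(\zeta)|=\delta_\Omega(\zeta)\le\alpha$ and $|\partial_{\overline{\zeta}_{a_s}}\rho(\zeta)|\lesssim \alpha/\tau_{a_s}$ by Lemma~\ref{lem:3.4-maj-deriv-rho-equiv-tho-i-z-zeta}(3) (with $\varepsilon=\alpha\ge\delta_\Omega(\zeta)$). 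Each of the two terms of $F$ is then bounded by $\alpha^2/(\tau_{a_s}\tau_{b_s})$, giving (1).

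For (2), with $D$ a first-order derivative in $z$ or $\zeta$, I would apply $D$ to each factor of each term via the Leibniz rule. The factors $\rho(\zeta)$, $\partial_{\overline{\zeta}_{a_s}}\rho(\zeta)$ contribute a further gain of $1/\tau_j$ when differentiated in the $\zeta_j$ direction and are harmless (or bounded) otherwise; the factors $Q_{b_s}$, $\partial_{\overline{\zeta}_{a_s}}Q_{b_s}$ are controlled by Lemma~\ref{lem:Estimates-Qi-and-derivates}, noting that $\partial_{\overline{\zeta}_{a_s}}Q_{b_s}$ is itself a derivative of $Q_{b_s}$ so one more derivative still falls under that lemma's scope in the regime where all quantities are comparable. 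The worst case is when $D$ hits a factor of $\rho(\zeta)$ or $\partial_{\overline\zeta}\rho$ in the \emph{normal} ($\tau_1=\alpha$) direction, producing no gain in $\alpha$; combined with the estimate $|\rho(\zeta)|\le\alpha$ replaced by $1$ (or $|\partial\rho|\lesssim 1$), one picks up $\alpha/(\tau_{a_s}\tau_{b_s})$ rather than $\alpha^2/(\tau_{a_s}\tau_{b_s})$, i.e.\ exactly a loss of one power of $\alpha$. Since $\alpha\gtrsim\delta_\Omega(z)+\delta_\Omega(\zeta)\gtrsim|\rho(\zeta)|$, dividing the (1)-bound by $|\rho(\zeta)|$ absorbs this loss, which gives (2).

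For (3) with $j\ge 2$, the point is no longer quantitative sharpness but mere boundedness: since $\rho Q_{b_s}\in\mathcal{C}^\infty(\overline{\Omega}\times\overline{\Omega})$ (the $Q_i$ are smooth up to the boundary by the remark following \eqref{eq:Kernel-KN-general}, and $\rho$ is smooth), $F=\partial_{\overline{\zeta}_{a_s}}(\rho Q_{b_s})$ and all its derivatives $DF$ are bounded on $\overline{\Omega}\times\overline{\Omega}$ by a uniform constant; then one simply notes $1\lesssim \alpha^2/(\tau_{a_s}\tau_{b_s}|\rho(\zeta)|^2)$ because $\tau_{a_s},\tau_{b_s}\lesssim \alpha^{1/m}$ hence $\le\text{diam}(\Omega)\lesssim 1$ while $|\rho(\zeta)|^2\lesssim\alpha^2$, so the desired inequality holds trivially (with a constant depending on $\Omega$ and on the diameter). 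The only genuine obstacle is bookkeeping in part (2): one must check every Leibniz term, and in particular verify that the two "raw" terms of $F$ do not develop a larger singularity than $\alpha^2/(\tau_{a_s}\tau_{b_s}|\rho(\zeta)|)$ after one differentiation; this is where the cancellation structure $F=\partial_{\overline\zeta}(\rho Q)$ — equivalently, that the singular factor $1/\rho$ is absent from $F$ — is essential and must be invoked to avoid a spurious $1/|\rho|^2$.
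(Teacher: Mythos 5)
Your proof is correct and follows essentially the same route as the paper, whose entire proof is the one-line citation of (3) of Lemma \ref{lem:3.4-maj-deriv-rho-equiv-tho-i-z-zeta} and Lemma \ref{lem:Estimates-Qi-and-derivates} that you are expanding. The only imprecision is your claim that a derivative of $\partial_{\overline{\zeta}_{a_{s}}}Q_{b_{s}}$ "still falls under that lemma's scope": Lemma \ref{lem:Estimates-Qi-and-derivates} controls only first derivatives of the $Q_{i}$, so the Leibniz term $\rho(\zeta)\,D\partial_{\overline{\zeta}_{a_{s}}}Q_{b_{s}}$ in part (2) must instead be bounded using the smoothness of the $Q_{i}$ on $\overline{\Omega}\times\overline{\Omega}$, which gives $\apprle\left|\rho(\zeta)\right|\apprle\alpha\apprle\frac{1}{\left|\rho(\zeta)\right|}\frac{\alpha^{2}}{\tau_{a_{s}}\tau_{b_{s}}}$ and so still suffices.
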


\begin{proof}
Comes from (3) of Lemma \ref{lem:3.4-maj-deriv-rho-equiv-tho-i-z-zeta}
and Lemma \ref{lem:Estimates-Qi-and-derivates}.
\end{proof}

\subsection{\label{sec:Proof-of-Theorem-2-1}Proof of Theorem \ref{thm:Theroem-without-derivative}}

The result stated here is slightly stronger than the one given in
\cite{ChDu18} and we give a detailed proof. We assume $N>2\left|\gamma\right|+1+N_{0}$.
\begin{sllem}
\label{lem:pointwise-estimate-kernel-Kk}If $z$ and $\zeta$ close
to the boundary and close together
\[
\left|K_{k}(z,\zeta)\right|\apprle\left|\frac{\rho(\zeta)}{\alpha}\right|^{N-1}\frac{1}{\left|z-\zeta\right|^{2(n-k)-1}}\prod_{i=1}^{k}\tau_{i}^{-2}
\]
where $\tau_{i}=\tau_{i}(z,\alpha)$ or $\tau_{i}=\tau_{i}(\zeta,\alpha)$
with $\alpha=\delta(z)+d(z,\zeta)\simeq\delta(\zeta)+d(z,\zeta)$,
otherwise
\[
\left|K_{k}(z,\zeta)\right|\apprle\frac{1}{\left|z-\zeta\right|^{2(n-k)-1}}.
\]
\end{sllem}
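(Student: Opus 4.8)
The plan is to split the estimate into two regimes. First, \emph{the complementary regime}, where $z$ and $\zeta$ are not both close to $\partial\Omega$ and close together: then $\alpha:=\delta_{\Omega}(z)+d(z,\zeta)$ is bounded below by a fixed positive constant (if $|z-\zeta|$ stays bounded below so does $d(z,\zeta)$; if $\delta_{\Omega}(z)$ or, using \eqref{eq:delta+pseudo-dist-symetric}, $\delta_{\Omega}(\zeta)$ stays bounded below then $\alpha\gtrsim 1$ directly). By \eqref{eq:min_denom_weight_bd+dist}, $\bigl|\tfrac{1}{K_{0}}S(z,\zeta)+\rho(\zeta)\bigr|\gtrsim|\rho(\zeta)|+|\rho(z)|+d(z,\zeta)\gtrsim\alpha\gtrsim 1$, so the weight factor is $\lesssim|\rho(\zeta)|^{N+k}$. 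Since the $Q_{i}$ and $\rho$ are $\mathcal{C}^{\infty}$ on $\overline{\Omega}\times\overline{\Omega}$, the only unbounded factors of $dQ=-\tfrac{1}{K_{0}}\tfrac{d\rho(\zeta)}{\rho(\zeta)^{2}}\wedge\sum_{i}Q_{i}\,d(\zeta_{i}-z_{i})+\tfrac{1}{K_{0}\rho(\zeta)}\sum_{i}dQ_{i}\wedge d(\zeta_{i}-z_{i})$ are powers of $1/\rho(\zeta)$, whence $|(dQ)^{k}|\lesssim|\rho(\zeta)|^{-2k}$; also $|s|\lesssim|z-\zeta|$ and $|(ds)^{n-k-1}|\lesssim 1$. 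As $N$ is large and $k\le n-1$, $|\rho(\zeta)|^{N+k}|\rho(\zeta)|^{-2k}=|\rho(\zeta)|^{N-k}\lesssim 1$, and multiplying the factors yields $|K_{k}(z,\zeta)|\lesssim|z-\zeta|^{-(2(n-k)-1)}$.

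For \emph{the main regime} --- $z,\zeta$ close to $\partial\Omega$ and close together --- I would first use the partition of unity $\{\chi_{l}\}$ of Section \ref{sec:Proof-of-Theorems} to reduce to the representation \eqref{eq:K-l-l-z-zeta-closes} of $K_{k}(z,\zeta)\chi_{l}(\zeta)$ (the parameter $r$ there drops out of the final estimate), fix a $(z,\alpha)$-extremal basis --- equivalent to a $(\zeta,\alpha)$-extremal one by Lemma \ref{lem:3.4-maj-deriv-rho-equiv-tho-i-z-zeta}(2), so one may take $\tau_{i}=\tau_{i}(z,\alpha)$ --- and note that $\zeta\in CP_{\alpha}(z)$ since $d(z,\zeta)<\alpha$, so Lemmas \ref{lem:Estimates-Qi-and-derivates} and \ref{lem:3.4-maj-deriv-rho-equiv-tho-i-z-zeta} apply with $\varepsilon\simeq\alpha$. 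Now estimate the factors of \eqref{eq:K-l-l-z-zeta-closes} one at a time. By \eqref{eq:min_denom_weight_bd+dist}, $\bigl|\tfrac{1}{K_{0}}S+\rho(\zeta)\bigr|\gtrsim|\rho(\zeta)|+|\rho(z)|+d(z,\zeta)\simeq\alpha$, so the weight factor is $\lesssim\bigl(\tfrac{|\rho(\zeta)|}{\alpha}\bigr)^{N+k}$. In $s=\sum_{i}(\overline{\zeta}_{i}-\overline{z}_{i})\,d\zeta_{i}$ one has $|\overline{\zeta}_{i}-\overline{z}_{i}|\le|z-\zeta|$, which against $|z-\zeta|^{-2(n-k)}$ gives $|z-\zeta|^{-(2(n-k)-1)}$, and $\overline{\partial}_{\zeta}s=\sum_{i}d\overline{\zeta}_{i}\wedge d\zeta_{i}$, $\overline{\partial}_{z}s=-\sum_{i}d\overline{z}_{i}\wedge d\zeta_{i}$ have coefficients $\pm1$. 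So it remains to bound $(\overline{\partial}_{\zeta}Q)^{k}$, which is the heart of the matter.

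Write $\overline{\partial}_{\zeta}Q=A+B$ with $A=-\tfrac{1}{K_{0}}\tfrac{\overline{\partial}\rho}{\rho(\zeta)^{2}}\wedge\sum_{i}Q_{i}\,d\zeta_{i}$ and $B=\tfrac{1}{K_{0}\rho(\zeta)}\sum_{i}\overline{\partial}_{\zeta}Q_{i}\wedge d\zeta_{i}$. Both $A$ and $B$ are of even degree, hence commute, and $\overline{\partial}\rho\wedge\overline{\partial}\rho=0$ forces $A\wedge A=0$; therefore $(\overline{\partial}_{\zeta}Q)^{k}=B^{k}+kA\wedge B^{k-1}$, so $1/\rho(\zeta)$ occurs to the power $k$, or $k+1$ in the monomials containing $A$ --- never $2k$. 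Using $|Q_{i}|\lesssim\alpha/\tau_{i}$ and $|\partial_{\overline{\zeta}_{j}}Q_{i}|\lesssim\alpha/(\tau_{i}\tau_{j})$ from Lemma \ref{lem:Estimates-Qi-and-derivates}, and $|\partial_{\overline{\zeta}_{j}}\rho|\lesssim\alpha/\tau_{j}$ from Lemma \ref{lem:3.4-maj-deriv-rho-equiv-tho-i-z-zeta}(3), a nonzero monomial of $(\overline{\partial}_{\zeta}Q)^{k}$ carrying holomorphic $\zeta$-index set $I$ and antiholomorphic $\zeta$-index set $J$ is bounded by $\dfrac{\alpha^{k}}{|\rho(\zeta)|^{k}\prod_{i\in I}\tau_{i}\prod_{j\in J}\tau_{j}}$ (a $B^{k}$-monomial) or $\dfrac{\alpha^{k+1}}{|\rho(\zeta)|^{k+1}\prod_{i\in I}\tau_{i}\prod_{j\in J}\tau_{j}}$ (an $A\wedge B^{k-1}$-monomial), and $|I|=|J|=k$ because in \eqref{eq:K-l-l-z-zeta-closes} the $n$ holomorphic $\zeta$-differentials ($1$ from $s$, $k$ from $(\overline{\partial}_{\zeta}Q)^{k}$, $n-k-r$ from $(\overline{\partial}_{\zeta}s)^{n-k-r}$, $r-1$ from $(\overline{\partial}_{z}s)^{r-1}$), and similarly the antiholomorphic ones, must be pairwise distinct for the monomial not to vanish. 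Since $i\mapsto\tau_{i}$ is increasing, $\prod_{i\in I}\tau_{i}\prod_{j\in J}\tau_{j}\ge\bigl(\prod_{j=1}^{k}\tau_{j}\bigr)^{2}$, and as $\alpha/|\rho(\zeta)|\ge 1$ the $A\wedge B^{k-1}$-monomials dominate, so $|(\overline{\partial}_{\zeta}Q)^{k}|\lesssim\dfrac{\alpha^{k+1}}{|\rho(\zeta)|^{k+1}}\prod_{j=1}^{k}\tau_{j}^{-2}$. Multiplying the four estimates and using $|\rho(\zeta)|\simeq\delta_{\Omega}(\zeta)$,
\[
|K_{k}(z,\zeta)|\lesssim\Bigl(\tfrac{|\rho(\zeta)|}{\alpha}\Bigr)^{N+k}\tfrac{\alpha^{k+1}}{|\rho(\zeta)|^{k+1}}\,\frac{1}{|z-\zeta|^{2(n-k)-1}}\prod_{j=1}^{k}\tau_{j}^{-2}=\Bigl(\tfrac{|\rho(\zeta)|}{\alpha}\Bigr)^{N-1}\frac{1}{|z-\zeta|^{2(n-k)-1}}\prod_{j=1}^{k}\tau_{j}^{-2},
\]
which is the claim.

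The step I expect to be the real obstacle is precisely the cancellation $\overline{\partial}\rho\wedge\overline{\partial}\rho=0$ exploited above: bounding each factor $\partial_{\overline{\zeta}_{a}}\!\bigl(Q_{i}/\rho(\zeta)\bigr)$ crudely by $\alpha^{2}/(|\rho(\zeta)|^{2}\tau_{i}\tau_{a})$ would give $|\rho(\zeta)|^{-2k}$, hence only the exponent $N-k$ on $|\rho(\zeta)|/\alpha$, which is weaker since $|\rho(\zeta)|/\alpha\le 1$ and $N-k\le N-1$; the decomposition $\overline{\partial}_{\zeta}Q=A+B$ instead caps the power of $1/\rho(\zeta)$ at $k+1$ and so converts the $\rho(\zeta)^{N+k}$ furnished by the kernel into the sharp $\rho(\zeta)^{N-1}$. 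Everything else is routine: keeping track of which differentials occur in each monomial of \eqref{eq:K-l-l-z-zeta-closes}, and invoking Lemmas \ref{lem:3.4-maj-deriv-rho-equiv-tho-i-z-zeta} and \ref{lem:Estimates-Qi-and-derivates}.
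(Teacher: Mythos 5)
Your proof is correct and follows the route the paper intends: its entire proof of this lemma is the single sentence that it ``follows easily'' from formula \eqref{eq:K-l-l-z-zeta-closes} and Lemma \ref{lem:estimates-derivatives-Qi-in-kernel}, i.e.\ one bounds each factor of \eqref{eq:K-l-l-z-zeta-closes} exactly as you do --- \eqref{eq:min_denom_weight_bd+dist} for the weight, $|s|\le|z-\zeta|$, and Lemmas \ref{lem:3.4-maj-deriv-rho-equiv-tho-i-z-zeta} and \ref{lem:Estimates-Qi-and-derivates} for $\left(\overline{\partial}_{\zeta}Q\right)^{k}$, together with the distinctness of the differentials and the monotonicity of the $\tau_{i}$; your treatment of the complementary regime is also fine. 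The one point where you go beyond the paper is the cancellation $A\wedge A=0$. Applying the paper's Lemma \ref{lem:estimates-derivatives-Qi-in-kernel}(1) blindly to each of the $k$ factors $F_{s}/\rho(\zeta)^{2}$ gives $\alpha^{2k}/|\rho(\zeta)|^{2k}$ and hence only the exponent $N-k$ on $\left|\rho(\zeta)/\alpha\right|$, which for $k\geq2$ is strictly weaker than the stated $N-1$ (since $|\rho(\zeta)|/\alpha\lesssim1$); your decomposition $\overline{\partial}_{\zeta}Q=A+B$, which caps the power of $1/\rho(\zeta)$ at $k+1$, is what actually delivers the exponent $N-1$ as written. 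You correctly diagnose this as the only non-routine step. The discrepancy is harmless for the paper --- every subsequent use of the lemma only requires the exponent to exceed some fixed quantity like $2|\gamma|+1$, so $N-k$ would serve just as well --- but your version is the one that literally proves the statement.
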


This Lemma follows easily formula \eqref{eq:K-l-l-z-zeta-closes}
and Lemma \ref{lem:estimates-derivatives-Qi-in-kernel}.
\begin{proof}[Proof of (1) of Theorem \ref{thm:Theroem-without-derivative}]
Let us consider the operators $T_{k}^{l}$, $0\leq l\leq M$.

We use Lemma \ref{lem:kernel-operator-weighted-estimate-Lp-Lq}, with
$s=1+\mu$ and $0\leq\mu<1/(2n-1)$, writing
\[
T_{k}^{l}f(z)=\int_{\Omega}\widetilde{K}_{k}^{l}(z,\zeta)\left(f(\zeta)\delta(\zeta)^{\frac{\gamma-\gamma'}{p}}\right)d\mu(\zeta)
\]
with $\widetilde{K}_{k}^{l}(z,\zeta)=\chi_{l}(\zeta)K_{k}(z,\zeta)\delta(\zeta)^{\frac{-\gamma+\gamma'}{p}-\gamma'}$
and $d\mu(\zeta)=\delta(\zeta)^{\gamma'}d\lambda(\zeta)$:

Thus to prove that $T_{k}$ maps $L^{p}\left(\delta^{\gamma}d\lambda\right)$
into $L^{q}\left(\delta^{\gamma'}d\lambda\right)$ (that is $\widetilde{T}_{k}^{l}$
defined by $\widetilde{T}_{k}^{l}g(z)=\int_{\Omega}\widetilde{K}_{k}^{l}(z,\zeta)g(\zeta)d\mu(\zeta)$
maps $L^{p}(d\mu)$ into $L^{q}(d\nu)$ with $d\nu(z)=\delta(z)^{\gamma'}d\lambda(z)$)
we have to estimate the two following integrals:
\[
I_{1}(z)=\int_{\Omega}\left|\widetilde{K}_{k}^{l}(z,\zeta)\right|^{1+\mu}\delta(\zeta)^{\gamma'-\varepsilon}d\lambda(\zeta),\,0\leq l\leq M
\]
and
\[
I_{2}(\zeta)=\int_{\Omega}\left|\widetilde{K}_{k}^{l}(z,\zeta)\right|^{1+\mu}\delta(z)^{\gamma'-\varepsilon}d\lambda(z),\,0\leq l\leq M,
\]
for $0\leq\varepsilon<\min\left\{ \gamma'+1,1\right\} $.

If $l=0$ the estimate of $I_{2}(z)$ is trivial because
\begin{itemize}
\item if $\left|z-\zeta\right|\leq\mathrm{dist}\left(V_{0},\partial\Omega\right)/2$,
$\delta_{\Omega}(z)\simeq\delta_{\Omega}(\zeta)$ and $\delta_{\Omega}(\zeta)^{-(1+\mu)\left(\frac{-\gamma+\gamma'}{p}-\gamma'\right)}\delta_{\Omega}(z)^{\gamma'-\varepsilon}$
is bounded,
\item if $\left|z-\zeta\right|>\mathrm{dist}\left(V_{0},\partial\Omega\right)/2$,
$\left|K_{k}(z,\zeta)\right|^{1+\mu}\delta_{\Omega}(\zeta)^{-(1+\mu)\left(\frac{-\gamma+\gamma'}{p}-\gamma'\right)}$
is bounded and $\gamma'-\varepsilon>-1$,
\end{itemize}
so $I_{2}(\zeta)\apprle_{\varepsilon,\mu}1\apprle\delta_{\Omega}(\zeta)^{-\varepsilon}$.

If $l\geq1$ and $z\notin2V_{l}$then:

$\left|K_{k}(z,\zeta)\right|^{1+\mu}\delta_{\Omega}(\zeta)^{-(1+\mu)\left(\frac{-\gamma+\gamma'}{p}-\gamma'\right)}$
is bounded, trivially if $\frac{-\gamma+\gamma'}{p}-\gamma'\leq0$
and otherwise by Lemma \ref{lem:pointwise-estimate-kernel-Kk}, and
we obtain the same result because $\gamma'-\varepsilon>-1$.

Similar result is easily obtained for $I_{1}(z)$.

So we only have to estimate these integrals when $l\geq1$ and $z\in2V_{l}$.

For $-1<\gamma'\leq\gamma$, $\gamma-\gamma'<p/m$, $\gamma<m+n+1$
let
\[
\mu_{0}(k)=\left\{ \begin{array}{cc}
\frac{1-\frac{m}{p}\left(\gamma-\gamma'\right)}{2\left(n-k\right)-1+m(k+1+\gamma'+\frac{\gamma-\gamma'}{p})} & \mathrm{if}\,k\neq0\\
\frac{1-\frac{\text{\ensuremath{\gamma}-\ensuremath{\gamma}'}}{p}}{2n-1+\gamma'+\frac{\gamma-\gamma'}{p}} & \mathrm{if}\,k=0
\end{array}\right.,
\]
\begin{spprop}
For $l\geq1$, $z\in2V_{l}$ and $\mu\leq\mu_{0}(k)$, $I_{1}(z)\apprle_{\varepsilon}\delta(z)^{-\varepsilon}$
if $0<\varepsilon<\gamma'+1$.
\end{spprop}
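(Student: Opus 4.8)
The plan is to fix $l\geq 1$ and $z\in 2V_l$, and work in the coordinate system $(\zeta_i)$ associated to a $(z,\alpha)$-extremal basis where $\alpha=\delta(z)+d(z,\zeta)$. First I would split the integral $I_1(z)$ according to the dyadic size of the pseudodistance: writing $\alpha\simeq\delta(z)+d(z,\zeta)$, partition the region $\{\zeta\in\mathrm{supp}\,\chi_l\}$ into the pieces $\{\zeta : d(z,\zeta)\simeq 2^j\delta(z)\}$ for $j\geq 0$ together with the core piece $\{d(z,\zeta)\lesssim\delta(z)\}$, on which $\alpha\simeq\delta(z)$ throughout. On each such piece $\zeta$ ranges essentially over a shell of the polydisc $P_{2^j\delta(z)}(z)$, $|\rho(\zeta)|\lesssim\alpha\simeq 2^j\delta(z)$, and $\delta(\zeta)\simeq\delta(z)+d(z,\zeta)$ on the part of the shell away from the boundary — but one must also keep the subregion where $\delta(\zeta)$ is genuinely small, which is precisely where the weight $\delta(\zeta)^{\gamma'-\varepsilon}$ is integrated using $\gamma'-\varepsilon>-1$.

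Next I would insert the pointwise bound of Lemma~\ref{lem:pointwise-estimate-kernel-Kk}: on each dyadic shell,
\[
\left|\widetilde K_k^l(z,\zeta)\right|^{1+\mu}\lesssim \left|\frac{\rho(\zeta)}{\alpha}\right|^{(N-1)(1+\mu)}\frac{\delta(\zeta)^{(1+\mu)\left(\gamma'-\frac{\gamma-\gamma'}{p}\right)}}{\left|z-\zeta\right|^{(2(n-k)-1)(1+\mu)}}\prod_{i=1}^k\tau_i(z,\alpha)^{-2(1+\mu)}.
\]
The factor $|\rho(\zeta)/\alpha|^{(N-1)(1+\mu)}$ is harmless — it is bounded by $1$ and, after integrating the normal direction against $\delta(\zeta)^{\gamma'-\varepsilon+(1+\mu)\gamma'-\cdots}$, it only contributes constants provided $N$ is large (this is why we assumed $N>2|\gamma|+1+N_0$). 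The spatial integral $\int |z-\zeta|^{-(2(n-k)-1)(1+\mu)}\,d\lambda(\zeta)$ over $P_\delta(z)$ is then controlled by \eqref{eq:integral-mod(z-zeta)-inverse-power-1+mu} of Lemma~\ref{lem:Lemma-5-1} with $\mu$ replaced by $(2(n-k)-1)\mu$ (legitimate since $\mu<1/(2n-1)$ keeps the exponent $<2$), giving a power of $\tau_{k+1}(z,\alpha)$ times $\prod_{j=1}^k\tau_j(z,\alpha)^2$; the latter cancels the $\prod_{i=1}^k\tau_i^{-2(1+\mu)}$ up to a residual negative power $\prod_{i=1}^k\tau_i^{-2\mu}$, which by \eqref{eq:estimation-tau-i-1-n} is bounded by a negative power of $\alpha$.

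Collecting the surviving powers of $\alpha\simeq\delta(z)$ and of $\delta(\zeta)$, the $j$-th dyadic shell contributes $(2^j\delta(z))^{\,a(k,\mu)}$ where the exponent $a(k,\mu)$ is an affine function of $\mu$ that vanishes exactly at $\mu=\mu_0(k)$ — this is the whole point of the definition of $\mu_0(k)$, with the two cases $k\neq 0$ and $k=0$ reflecting whether the $\tau_{k+1}$ factor appears. For $\mu<\mu_0(k)$ we get $a(k,\mu)>0$, so the geometric series $\sum_{j\geq 0}(2^j\delta(z))^{a(k,\mu)}$ is dominated by its first terms and summing against the core contribution yields $I_1(z)\lesssim_\varepsilon \delta(z)^{-\varepsilon}$; for $\mu=\mu_0(k)$ exactly the residual $\varepsilon$-room in the exponent of $\delta(\zeta)$ (using $0<\varepsilon<\gamma'+1$) keeps the series summable and still produces $\delta(z)^{-\varepsilon}$. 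The main obstacle is purely bookkeeping: one must track the exponents of $\delta(z)$, $\delta(\zeta)$, $|\rho(\zeta)|$ and each $\tau_i$ through the substitution into Lemma~\ref{lem:Lemma-5-1}, handle separately the boundary portion of each polydisc shell where $\delta(\zeta)\to 0$ (there the $\delta(\zeta)^{\gamma'-\varepsilon}$ integration in the normal variable is what makes $\mu_0(k)$ sharp rather than strict), and verify the case distinction $k=0$ versus $k\geq 1$ — beyond that the estimate is routine.
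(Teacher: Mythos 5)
Your proposal follows essentially the same route as the paper's proof: decompose into the dyadic polydisc shells $P_{j}(z)=P\left(z,2^{j}\delta(z)\right)\setminus P\left(z,2^{j-1}\delta(z)\right)$, insert the pointwise bound of Lemma \ref{lem:pointwise-estimate-kernel-Kk}, integrate with \eqref{eq:integral-mod(z-zeta)-inverse-power-1+mu}, absorb the leftover $\prod\tau_{i}^{-2\mu}$ via \eqref{eq:estimation-tau-i-1-n}, and note that $\mu_{0}(k)$ is defined precisely so that the resulting exponent of $2^{j}\delta(z)$ is $\geq-\varepsilon$. One slip in the summation step: when $a(k,\mu)>0$ the terms $\left(2^{j}\delta(z)\right)^{a(k,\mu)}$ \emph{increase} with $j$, so the series is not "dominated by its first terms"; it is controlled either because $2^{j}\delta(z)\lesssim1$ on $\operatorname{supp}\chi_{l}$ so there are only finitely many relevant shells, or, as in the paper, by retaining the $-\varepsilon$ from the weight in every shell to get the convergent series $\sum_{j}\left(2^{j}\delta(z)\right)^{-\varepsilon}\lesssim\delta(z)^{-\varepsilon}$.
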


\begin{proof}
Denote $P_{o}(z)=P(z,\delta(z))$ and $P_{i}(z)=P\left(z,2^{i}\delta(z)\right)\setminus P\left(z,2^{i-1}\delta(z)\right)$
for $i\geq1$. Then, using Lemma \ref{lem:pointwise-estimate-kernel-Kk}
or a direct calculus ($N>2\left|\gamma\right|$), if $\zeta\in P_{i}(z)$,
by \eqref{eq:dist-boundary-in-polydisc}
\[
\left|\widetilde{K}_{k}^{l}(z,\zeta)\right|^{1+\mu_{0}}\delta(\zeta)^{\gamma'-\varepsilon}\apprle\left|K_{k}(z,\zeta)\right|^{1+\mu_{0}}\text{\ensuremath{\left(2^{i}\delta(z)\right)}}^{\left(1+\mu_{0}\right)\frac{\gamma'-\gamma}{p}-\mu_{0}\gamma'-\varepsilon}.
\]
 \begin{enumerate}\item Case $k\neq0$\\
By Lemma \ref{lem:Lemma-5-1}, \eqref{eq:integral-mod(z-zeta)-inverse-power-1+mu},
properties \eqref{eq:estimation-tau-i-1-n}
\begin{eqnarray*}
\int_{P_{i}(z)}\left|K_{k}(z,\zeta)\right|^{1+\mu_{0}}d\lambda(\zeta) & \apprle & \tau_{k+1}\left(z,2^{i}\delta(z)\right)^{1-\mu_{0}(2(n-k)-1)}\prod_{j=1}^{k}\left(\tau_{j}^{-2}\left(z,2^{i}\delta(z)\right)\right)^{\mu_{0}}\\
 & \apprle & \left(2^{i}\delta(z)\right)^{\frac{1}{m}\left(1-\mu_{0}(2(n-k)-1)\right)-\mu_{0}(k+1)}.
\end{eqnarray*}
Then
\begin{multline*}
\int_{P_{i}(z)}\left|\widetilde{K}_{k}^{l}(z,\zeta)\right|^{1+\mu_{0}}\delta(\zeta)^{\gamma'-\varepsilon}d\lambda(\zeta)\\
\apprle\left(2^{i}\delta(z)\right)^{\frac{1}{m}\left(1-\mu_{0}(2(n-k)-1)\right)-\mu_{0}(k+1)+(1+\mu_{0})\frac{\gamma'-\gamma}{p}-\mu_{0}\gamma'-\varepsilon}\\
\apprle\left(2^{i}\delta(z)\right)^{-\varepsilon-\mu_{0}\left(\frac{2(n-k)-1}{m}+k+1+\frac{\gamma-\gamma'}{p}+\gamma'\right)+\frac{1}{m}+\frac{\gamma'-\gamma}{p}}\\
\apprle\left(2^{i}\delta(z)\right)^{-\varepsilon}.
\end{multline*}
\\
\item Case $k=0$\\
Similarly
\begin{eqnarray*}
\int_{P_{i}(z)}\left|\widetilde{K}_{0}^{l}(z,\zeta)\right|^{1+\mu_{0}}\delta(\zeta)^{\gamma'-\varepsilon}d\lambda(\zeta) & \apprle & \left(2^{i}\delta(z)\right)^{-\varepsilon-\mu_{0}\gamma'-(2n-1)\mu_{0}+\left(1+\mu_{0}\right)\frac{\gamma'-\gamma}{p}}\\
 & \apprle & \left(2^{i}\delta(z)\right)^{-\varepsilon}.
\end{eqnarray*}
\\
\end{enumerate}Thus $I_{1}(z)\apprle_{\varepsilon}\delta(z)^{-\varepsilon}$
which ends the proof of the Proposition.\vspace{1em}
\end{proof}
\begin{spprop}
For $l\geq1$, $z\in2V_{l}$ $I'_{2}(\zeta)=\int_{\Omega\cap2V_{l}}\left|\widetilde{K}_{k}^{l}(z,\zeta)\right|^{1+\mu}\delta(z)^{\gamma'-\varepsilon}d\lambda(z)\apprle_{\varepsilon}\delta(\zeta)^{-\varepsilon}$
with $0<\varepsilon<\min\left\{ \gamma'+1,1\right\} $ and $\mu\leq\mu_{0}(k)$
or $\varepsilon=0$ with $\mu\leq\mu_{0}(k)$ if $\gamma<0$ and $\mu<\mu_{0}(k)$
otherwise.
\end{spprop}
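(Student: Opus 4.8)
The plan is to run exactly the argument that estimated $I_1(z)$ in the previous Proposition, but with the roles of $z$ and $\zeta$ interchanged: here $\zeta$ is frozen and we integrate in $z$. Since $\widetilde{K}_k^l(z,\zeta)=\chi_l(\zeta)K_k(z,\zeta)\delta(\zeta)^{\frac{-\gamma+\gamma'}{p}-\gamma'}$, the whole factor $\delta(\zeta)^{(1+\mu)\left(\frac{-\gamma+\gamma'}{p}-\gamma'\right)}$ pulls out of the integral, so (after discarding the trivial case $\zeta\notin V_l$, where $\chi_l(\zeta)=0$; note that then $\zeta$, and every point of $2V_l$, is close to $\partial\Omega$) it is enough to prove
\[
\int_{\Omega\cap2V_l}|K_k(z,\zeta)|^{1+\mu}\,\delta(z)^{\gamma'-\varepsilon}\,d\lambda(z)\lesssim\delta(\zeta)^{-\varepsilon+(1+\mu)\left(\frac{\gamma-\gamma'}{p}+\gamma'\right)}.
\]

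First I would decompose $\Omega\cap2V_l$ into $P_o(\zeta)=P(\zeta,\delta(\zeta))$ and the shells $P_i(\zeta)=P(\zeta,2^i\delta(\zeta))\setminus P(\zeta,2^{i-1}\delta(\zeta))$, $i\geq1$ (only $i\lesssim\log(1/\delta(\zeta))$ of them meet $2V_l$, which has bounded pseudo-diameter). On $P_i(\zeta)$ one has $\alpha:=\delta(z)+d(z,\zeta)\simeq2^i\delta(\zeta)$ and $|\rho(\zeta)|/\alpha\simeq2^{-i}$, so Lemma \ref{lem:pointwise-estimate-kernel-Kk} gives
\[
|K_k(z,\zeta)|^{1+\mu}\lesssim 2^{-i(N-1)(1+\mu)}\,|z-\zeta|^{-(2(n-k)-1)(1+\mu)}\prod_{j=1}^{k}\tau_j\!\left(\zeta,2^i\delta(\zeta)\right)^{-2(1+\mu)}.
\]
The remaining factor $\delta(z)^{\gamma'-\varepsilon}|z-\zeta|^{-(2(n-k)-1)(1+\mu)}$ I would integrate over $P_i(\zeta)$ in two cases: if $\gamma'-\varepsilon\leq0$ then $\beta:=\gamma'-\varepsilon\in(-1,0]$ (this is where $\varepsilon<\gamma'+1$ is used) and \eqref{eq:3.16} with $\delta=2^i\delta(\zeta)$ applies; if $\gamma'-\varepsilon>0$ I would first replace $\delta(z)^{\gamma'-\varepsilon}$ by $C(2^i\delta(\zeta))^{\gamma'-\varepsilon}$ using \eqref{eq:dist-boundary-in-polydisc} and then apply \eqref{eq:integral-mod(z-zeta)-inverse-power-1+mu}. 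Either way one gets the same bound, of the form $(2^i\delta(\zeta))^{\gamma'-\varepsilon}\tau_{k+1}(\zeta,2^i\delta(\zeta))^{1-\mu(2(n-k)-1)}\prod_{j=1}^{k}\tau_j(\zeta,2^i\delta(\zeta))^2$ (for $k=0$ one uses the $k=0$ lines of \eqref{eq:3.16}, resp. \eqref{eq:integral-mod(z-zeta)-inverse-power-1+mu}, which is why $\mu<1/(2n-1)$ is imposed).

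Multiplying the two bounds and estimating the $\tau$-powers by their extremal exponents exactly as in the proof of the previous Proposition — $\tau_1(\zeta,2^i\delta(\zeta))=2^i\delta(\zeta)$, $(2^i\delta(\zeta))^{1/2}\lesssim\tau_j\lesssim(2^i\delta(\zeta))^{1/m}$ for $j\geq2$, hence $\prod_{j\leq k}\tau_j^{-2\mu}\lesssim(2^i\delta(\zeta))^{-\mu(k+1)}$ and $\tau_{k+1}^{1-\mu(2(n-k)-1)}\lesssim(2^i\delta(\zeta))^{(1-\mu(2(n-k)-1))/m}$ — the integral over $P_i(\zeta)$ is $\lesssim2^{-i(N-1)(1+\mu)}(2^i\delta(\zeta))^{E}$ with $E=\gamma'-\varepsilon+\tfrac1m-\mu\big(\tfrac{2(n-k)-1}{m}+k+1\big)$ (and the analogous expression when $k=0$). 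Since $N$ is large the series $\sum_i 2^{i(E-(N-1)(1+\mu))}$ converges, so the whole integral is $\lesssim\delta(\zeta)^{E}$, and the inequality $E\geq-\varepsilon+(1+\mu)\left(\frac{\gamma-\gamma'}{p}+\gamma'\right)$ is, after clearing $m$, precisely $\mu\leq\mu_0(k)$ — the very same constraint that governed $I_1$. This proves the statement for $\varepsilon>0$. For the endpoint $\varepsilon=0$ the identical computation works; when $\gamma<0$ the pulled-out factor $\delta(\zeta)^{(1+\mu)\left(\frac{-\gamma+\gamma'}{p}-\gamma'\right)}$ stays bounded and $\mu=\mu_0(k)$ (equality in $E\geq$ target) is still admissible, whereas when $\gamma\geq0$ that factor blows up as $\delta(\zeta)\to0$ and one needs $E$ strictly larger than the target exponent, i.e. $\mu<\mu_0(k)$.

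The hard part will be the bookkeeping of the weight $\delta(z)^{\gamma'-\varepsilon}$: unlike for $I_1$ this weight now sits on the integration variable and its exponent may be positive, so \eqref{eq:3.16} is then unavailable and one has to pass through the coarser estimate $\delta(z)^{\gamma'-\varepsilon}\lesssim(2^i\delta(\zeta))^{\gamma'-\varepsilon}$; checking that this still yields an exponent $E$ meeting the target under $\mu\leq\mu_0(k)$, together with the borderline case $\varepsilon=0$, $\gamma\geq0$, is the only delicate point. The contributions from $l=0$, from $z\notin2V_l$ and from $\zeta\notin V_l$ are routine and are handled exactly as the corresponding pieces of $I_1$ and $I_2$ above.
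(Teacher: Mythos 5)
Your proposal follows essentially the same route as the paper's proof: the dyadic decomposition of $\Omega\cap 2V_{l}$ into the shells $P_{i}(\zeta)$ centred at $\zeta$, the pointwise bound of Lemma \ref{lem:pointwise-estimate-kernel-Kk}, the integral estimate \eqref{eq:3.16}, and the same exponent bookkeeping that turns the requirement on the total power of $2^{i}\delta(\zeta)$ into exactly $\mu\leq\mu_{0}(k)$. You are in fact slightly more careful than the paper at the one delicate point you flagged: the paper invokes \eqref{eq:3.16} citing only $\gamma'-\varepsilon>-1$, although that estimate is stated for $-1<\beta\leq0$, whereas your fallback for $\gamma'-\varepsilon>0$ via $\delta(z)\lesssim 2^{i}\delta(\zeta)$ (from \eqref{eq:dist-boundary-in-polydisc}) and \eqref{eq:integral-mod(z-zeta)-inverse-power-1+mu} closes that small gap and yields the same exponent.
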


\begin{proof}
Assume $k\neq0$. By Lemma \ref{lem:pointwise-estimate-kernel-Kk}
or trivially if $\frac{-\gamma+\gamma'}{p}-\gamma'\leq0$, for $z\in P_{i}(\zeta)$
\[
\left|\widetilde{K}_{k}^{l}(z,\zeta)\right|^{1+\mu}\delta(z)^{\gamma'-\varepsilon}\apprle\left|K_{k}(z,\zeta)\right|^{1+\mu}\left(2^{i}\delta_{\Omega}(\zeta)\right)^{\frac{-\gamma+\gamma'}{p}-\gamma'}\delta(z)^{\gamma'-\varepsilon}
\]
and, by \eqref{eq:3.16}, $\gamma'-\varepsilon>-1$ implies
\[
I'_{2}(\zeta)\apprle\left(2^{i}\delta_{\Omega}(\zeta)\right)^{-\varepsilon-\mu\left(\frac{2(n-k)-1}{m}+k+1+\frac{\gamma-\gamma'}{p}+\gamma'\right)+\frac{1}{m}+\frac{\gamma'-\gamma}{p}}.
\]

The case $k=0$ is easily treated as in the previous proposition which
ends the proof.
\end{proof}
The proof of (1) of Theorem \ref{thm:Theroem-without-derivative}
is complete.
\end{proof}
\begin{proof}[Proof of (2) of Theorem \ref{thm:Theroem-without-derivative}]
We give two different methods for the study of $T_{0}f$ and $T_{k}f$,
$k\geq1$.
\begin{itemize}
\item \emph{Case $k\geq1$. }As in \cite{ChDu18} we use the Hardy-Littlewood
lemma, so, we have to prove the following inequality 
\[
\left|\nabla_{z}T_{k}f\right|\apprle\delta_{\Omega}(z)^{\alpha-1},
\]
if $p>m(\gamma+n)+2-(m-2)(r-1)$ with $\alpha=\frac{1}{m}\left[1-\frac{m(\gamma+n)+2-(m-2)(r-1)}{p}\right]$.\\
By Hölder inequality it follows from
\begin{equation}
\int_{\Omega}\left|\nabla_{z}K_{k}(z,\zeta)\chi_{l}(\zeta)\right|^{\frac{p}{p-1}}\delta_{\Omega}(\zeta)^{-\frac{\gamma}{p-1}}\apprle\delta_{\Omega}(z)^{\frac{p(\alpha-1)}{p-1}}.\label{eq:Lipschtz-est-kernel}
\end{equation}
\\
If $l=0$
\begin{eqnarray*}
\left|\nabla_{z}K_{k}(z,\zeta)\chi_{l}(\zeta)\right|^{\frac{p}{p-1}}\delta_{\Omega}(\zeta)^{-\frac{\gamma}{p-1}} & \apprle & \frac{1}{\left|z-\zeta\right|^{(2n-1)\frac{p}{p-1}}}\\
 & \apprle & \frac{1}{\left|z-\zeta\right|^{2n\frac{2n-2}{2n-1}}}
\end{eqnarray*}
and \eqref{eq:Lipschtz-est-kernel} is trivial.\\
If $l\geq1$ and $z\notin2V_{l}$ $\left|\nabla_{z}K_{k}(z,\zeta)\chi_{l}(\zeta)\right|$
is bounded (by Lemma \ref{lem:pointwise-estimate-kernel-Kk}) and
\eqref{eq:Lipschtz-est-kernel} is clear because $\frac{\gamma}{p-1}<1$.\\
In the last case, the $Q_{i}$ being holomorphic in $z$, it remains
only the terms corresponding to $k\leq n-r$ and by Lemma \ref{lem:pointwise-estimate-kernel-Kk},
Lemma \ref{lem:estimates-derivatives-Qi-in-kernel}, the fact that
for $s\neq s'$ $a_{s}\neq a_{s'}$ and $b_{s}\neq b_{s'}$ and the
increasing property of the $\tau_{i}$, if $z\in2V_{l}$ we have
\begin{multline*}
\left|\nabla_{z}K_{k}(z,\zeta)\chi_{l}(\zeta)\delta_{\Omega}(\zeta)^{-\gamma/p}\right|\apprle\\
\frac{1}{\left|z-\zeta\right|^{2(n-k)-1}}\left(\frac{1}{\delta(z)+d(z,\zeta)}\right)^{1+\frac{\gamma}{p}}\prod_{i=1}^{k}\tau_{i}(z,\delta(z)+d(z,\zeta))^{-2}\\
+\frac{1}{\left|z-\zeta\right|^{2(n-k)}}\left(\frac{1}{\delta(z)+d(z,\zeta)}\right)^{\frac{\gamma}{p}}\prod_{i=1}^{k}\tau_{i}(z,\delta(z)+d(z,\zeta))^{-2}\\
=W_{1}+W_{2}.
\end{multline*}
\\
Let $P_{0}(z)=P(z,\delta(z))$ and $P_{j}(z)=P\left(z,2^{j}\delta(z)\right)\setminus P\left(z,2^{j-1}\delta(z)\right)$.
Then, if $\zeta\in P_{j}(z)$, by \eqref{eq:dist-boundary-in-polydisc},
\[
\left|W_{1}\right|^{\frac{p}{p-1}}\apprle\frac{1}{\left(\left|z-\zeta\right|^{2(n-k)-1}\right)^{\frac{p}{p-1}}}\left(2^{j}\delta(z)\right)^{-\frac{p}{p-1}\left(1+\frac{\gamma}{p}\right)}\prod_{i=1}^{k}\tau_{i}(z,\delta(z)+d(z,\zeta))^{-2\frac{p}{p-1}}
\]
and by Lemma \ref{lem:Lemma-5-1}
\[
\int_{P_{j}(z)}\frac{1}{\left(\left|z-\zeta\right|^{2(n-k)-1}\right)^{\frac{p}{p-1}}}d\lambda(\zeta)\apprle\tau_{k+1}(z,2^{i}\delta_{\Omega}(z))^{1-(2(n-k)-1)\frac{1}{p-1}}\prod_{i=1}^{k}\tau_{i}(z,2^{j}\delta_{\Omega}(z))^{2}.
\]
\\
Then
\begin{eqnarray*}
\int_{P_{j}(z)}\left|W_{1}\right|^{\frac{p}{p-1}} & \apprle & \tau_{k+1}(z,2^{j}\delta_{\Omega}(z))^{1-\frac{(2(n-k)-1)}{p-1}}\left(2^{j}\delta(z)\right)^{1-\frac{(2(n-k)-1)}{p-1}}\prod_{i=1}^{k}\tau_{i}(z,2^{j}\delta_{\Omega}(z))^{\frac{-2}{p-1}}\\
 & \apprle & \left(2^{j}\delta(z)\right)^{\frac{1}{m}\left(1-\frac{(2(n-k)-1)}{p-1}\right)-\frac{p+\gamma+k+1}{p-1}}\,\mathrm{by\,\eqref{eq:estimation-tau-i-1-n}}\\
 & \apprle & \left(2^{j}\delta(z)\right)^{\frac{p}{m(p-1)}\left[1-m-\frac{m(\gamma+n-r+1)+2r}{p}\right]}\mathrm{\,(because}\,k\leq n-r\mathrm{)}\\
 & = & \left(2^{j}\delta(z)\right)^{\frac{p(\alpha-1)}{p-1}}.
\end{eqnarray*}
\\
Similarly, if $\zeta\in P_{j}(z)$,
\[
\left|W_{2}\right|^{\frac{p}{p-1}}\apprle\frac{1}{\left(\left|z-\zeta\right|^{2(n-k)}\right)^{\frac{p}{p-1}}}\left(2^{j}\delta(z)\right)^{\frac{-\gamma}{p-1}}\prod_{i=1}^{k}\tau_{i}\left(z,2^{j}\delta(z)\right)^{-\frac{2p}{p-1}}
\]
and as Lemma \ref{lem:Lemma-5-1} implies
\[
\int_{P_{j}(z)}\frac{1}{\left|z-\zeta\right|^{\frac{2p(n-k)}{p-1}}}\apprle\tau_{k}\left(z,2^{j}\delta(z)\right)^{2-\frac{2(n-k)}{p-1}}\prod_{i=1}^{k-1}\tau_{i}\left(z,2^{j}\delta(z)\right)^{2}
\]
we get
\begin{eqnarray*}
\int_{P_{j}(z)}\left|W_{2}\right|^{\frac{p}{p-1}} & \apprle & \left\{ \begin{array}{cc}
\left(2^{j}\delta(z)\right)^{-\frac{1}{p-1}\left(2n-2+\gamma\right)} & \mathrm{if\,}k>1\\
\left(2^{j}\delta(z)\right)^{-\frac{1}{p-1}(2n+\gamma)} & \mathrm{if\,}k=1
\end{array}\right.\\
 & \apprle & \left(2^{j}\delta(z)\right)^{\frac{p\left(\alpha'-1\right)}{p-1}}
\end{eqnarray*}
with $\alpha'=1-\frac{2n+\gamma}{p}>\alpha$.\vspace{1em}
\item \emph{Case $k=0$.} In that case as we cannot derive $T_{0}f$ we
make a direct computation for kernels of the form
\[
K_{0}(z,\zeta)=\frac{\zeta_{t}-z_{t}}{\left|z-\zeta\right|^{2n}}\left(\frac{\rho(\zeta)}{\frac{1}{K_{0}}S(z,\zeta)+\rho(\zeta)}\right)^{N}\chi_{l}(\zeta).
\]
\\
We have to estimate the difference
\begin{multline*}
\left|u_{0}(z_{0})-u_{0}(z_{1})\right|\leq\int_{\Omega}\left|K_{0}(z_{0},\zeta)-K_{0}(z_{1},\zeta)\right|\left|f(\zeta)\right|d\lambda(\zeta)\\
\apprle\left(\int_{\Omega}\left|f(\zeta)\right|^{p}\delta_{\Omega}(\zeta)^{\gamma}d\lambda(\zeta)\right)^{1/p}\\
\left(\int_{\Omega}\left|K_{0}(z_{0},\zeta)-K_{0}(z_{1},\zeta)\right|^{\frac{p}{p-1}}\delta_{\Omega}(\zeta)^{-\frac{\gamma}{p-1}}d\lambda(\zeta)\right)^{\frac{p-1}{p}}.
\end{multline*}
\\
Let us write, $z_{0}$ and $z_{1}$ being fixed,
\begin{multline*}
K_{0}(z_{0},\zeta)-K_{0}(z_{1},\zeta)=\\
\left(\frac{\zeta_{t}-z_{0,t}}{\left|z_{0}-\zeta\right|^{2n}}-\frac{\zeta_{t}-z_{1,t}}{\left|z_{1}-\zeta\right|^{2n}}\right)\left(\frac{\rho(\zeta)}{\frac{1}{K_{0}}S(z_{0},\zeta)+\rho(\zeta)}\right)^{N}\chi_{l}(\zeta)\\
+\left(\left(\frac{\rho(\zeta)}{\frac{1}{K_{0}}S(z_{0},\zeta)+\rho(\zeta)}\right)^{N}-\left(\frac{\rho(\zeta)}{\frac{1}{K_{0}}S(z_{1},\zeta)+\rho(\zeta)}\right)^{N}\right)\frac{\zeta_{t}-z_{1,t}}{\left|z_{1}-\zeta\right|^{2n}}\chi_{l}(\zeta)\\
=K_{0}^{1}(\zeta)+K_{0}^{2}(\zeta).
\end{multline*}
\\
Let us denote $T_{0}$ the real tangent space to $\rho$ and $\eta_{0}$
the inward real normal to $\rho$ at the point $z_{0}$. Let $Z\in T_{0}$
and $W\in\partial\Omega$ such that $\zeta=W+t\eta_{0}$ and $W=Z+t_{1}\eta_{0}$.\\
Let us first estimate $I\left(z_{0},z_{1}\right)=\int_{\Omega}\left|K_{0}^{1}(\zeta)\right|^{\frac{p}{p-1}}\delta_{\Omega}(\zeta)^{-\frac{\gamma}{p-1}}d\lambda(\zeta)$.\\
Let $P_{0}\left(z_{0}\right)=B\left(z_{0},3\left|z_{0}-z_{1}\right|\right)$,
$P_{j}\left(z_{0}\right)=B\left(z_{0},3^{j+1}\left|z_{0}-z_{1}\right|\right)\setminus B\left(z_{0},3^{j}\left|z_{0}-z_{1}\right|\right)$
and $Q_{j}\left(z_{0}\right)=P_{j}\left(z_{0}\right)\cap T_{0}$ where
$B(z,r)$ denotes the euclidean ball of center $z$ and radius $r$
in $\mathbb{C}^{n}$.\\
Let us first consider $I_{0}\left(z_{0},z_{1}\right)=\int_{P_{0}\left(z_{0}\right)}\left|K_{0}^{1}(\zeta)\right|^{\frac{p}{p-1}}\delta_{\Omega}(\zeta)^{-\frac{\gamma}{p-1}}d\lambda(\zeta)$:
\begin{multline*}
I_{0}\left(z_{0},z_{1}\right)\apprle\int_{P_{0}\left(z_{0}\right)}\frac{\delta_{\Omega}(\zeta)^{-\frac{\gamma}{p-1}}}{\left|z_{0}-\zeta\right|^{(2n-1)\frac{p}{p-1}}}d\lambda(\zeta)\\
+\int_{P_{0}\left(z_{0}\right)}\frac{\delta_{\Omega}(\zeta)^{-\frac{\gamma}{p-1}}}{\left|z_{1}-\zeta\right|^{(2n-1)\frac{p}{p-1}}}d\lambda(\zeta)=I_{0}^{1}\left(z_{0},z_{1}\right)+I_{0}^{2}\left(z_{0},z_{1}\right).
\end{multline*}
To calculate the integral $I_{0}^{1}\left(z_{0},z_{1}\right)$ we
use the coordinate system $\left(Z,t\right)\in T\times\mathbb{R}\eta_{0}$:
$\delta(\zeta)\simeq t$, $\left|z_{0}-\zeta\right|\simeq\left|Z-z_{0}\right|+\left|t+t_{1}\right|$
(note that $\frac{\gamma}{p-1}<1$)
\begin{eqnarray*}
I_{0}^{1}\left(z_{0},z_{1}\right) & \apprle & \int_{\left|Z-z_{0}\right|\leq3\left|z_{0}-z_{1}\right|}\left|Z-z_{0}\right|^{-\frac{1}{p-1}\left(\gamma+(2n-1)p\right)+1}\\
 & \apprle & \left|z_{0}-z_{1}\right|^{\frac{1}{p-1}\left(p-(\gamma+2n)\right)}.
\end{eqnarray*}
\\
Now it is easy to see that the same estimate is valid for $I_{0}^{2}\left(z_{0},z_{1}\right)$.\\
Let us now look at $I_{j}\left(z_{0},z_{1}\right)=\int_{P_{j}\left(z_{0}\right)}\left|K_{0}^{1}(\zeta)\right|^{\frac{p}{p-1}}\delta_{\Omega}(\zeta)^{-\frac{\gamma}{p-1}}d\lambda(\zeta)$,
$j\geq1$: using that
\[
\left|\frac{\zeta_{t}-z_{0,t}}{\left|z_{0}-\zeta\right|^{2n-1}}-\frac{\zeta_{t}-z_{1,t}}{\left|z_{1}-\zeta\right|^{2n-1}}\right|\apprle\left|z_{0}-z_{1}\right|\left(3^{j}\left|z_{0}-z_{1}\right|\right)^{-2n}
\]
we get
\begin{eqnarray*}
I_{j}\left(z_{0},z_{1}\right) & \apprle & \left|z_{0}-z_{1}\right|^{\frac{p}{p-1}}\left(3^{j}\left|z_{0}-z_{1}\right|\right)^{-\frac{2np}{p-1}+2n-1-\frac{\gamma}{p-1}+1}\\
 & \apprle & 3^{\frac{-j(2n+\gamma)}{p-1}}\left|z_{0}-z_{1}\right|^{\frac{p-(2n+\gamma)}{p-1}}
\end{eqnarray*}
\\
Thus $\sum_{j}I_{j}\left(z_{0},z_{1}\right)\apprle\left|z_{0}-z_{1}\right|^{\frac{p-(2n+\gamma)}{p-1}}$.
Finally we have
\[
\left(\int_{\Omega}\left|K_{0}^{1}(\zeta)\right|^{\frac{p}{p-1}}\delta_{\Omega}(\zeta)^{-\frac{\gamma}{p-1}}d\lambda(\zeta)\right)^{\frac{p-1}{p}}\apprle\left|z_{0}-z_{1}\right|^{1-\frac{2n+\gamma}{p}}.
\]
\\
To estimate 
\[
\int_{\Omega}\left|K_{0}^{2}(\zeta)\right|^{\frac{p}{p-1}}\delta_{\Omega}(\zeta)^{-\frac{\gamma}{p-1}}d\lambda(\zeta)
\]
 we use the pseudo-balls $P_{0}\left(z_{1}\right)=P\left(z_{1},\left|z_{0}-z_{1}\right|\right)$
and $P_{k}\left(z_{1}\right)=P\left(z_{1},2^{k}\left|z_{0}-z_{1}\right|\right)\setminus P\left(z_{1},2^{k-1}\left|z_{0}-z_{1}\right|\right)$.\\
For $\int_{P_{0}\left(z_{1}\right)}\left|K_{0}^{2}(\zeta)\right|^{\frac{p}{p-1}}\delta_{\Omega}(\zeta)^{-\frac{\gamma}{p-1}}d\lambda(\zeta)$
we argue as for $I_{0}^{1}\left(z_{0},z_{1}\right)$ using Lemma \ref{lem:Lemma-5-1}\@.\\
This ends the proof because $\alpha<1-\frac{2n+\gamma}{p}$.
\end{itemize}
\end{proof}

\subsection{Proofs of Theorems \ref{thm:Theorem-Sobolev-Lp-k} and \ref{thm:Theorem-Sobolev-Lp-k-0-compact-supp}}

Denotes $f_{l}=\chi_{l}f$, so
\[
T_{K}f(z)=\sum_{l=0}^{M}\sum_{k=0}^{n-1}\int_{\Omega}K_{k}(z,\zeta)\wedge f_{l}(\zeta).
\]

As explained in Section \ref{sec:Definition-and-properties-solution-d-bar}
we extend the kernels $K_{k}$ to $\left(z,\zeta\right)\in\Omega\times\mathbb{C}^{n}$
with
\[
K_{k}(z,\zeta)=\left\{ \begin{array}{cc}
K_{k}(z,\zeta) & \text{if }(z,\zeta)\in\Omega\times\Omega,\\
0 & \text{if }(z,\zeta)\in\Omega\times\left(\mathbb{C}^{n}\setminus\Omega\right),
\end{array}\right.
\]
and we also extend the coefficients of the $\left(0,r\right)$-form
$f$ in $\mathcal{C}_{0}^{\infty}\left(\mathbb{C}^{n}\right)$. Thus,
for $0\leq k\leq n-r$, making the change of variable $\xi=z-\zeta$
we get
\begin{equation}
T_{k}^{l}f(z)=-\int_{\mathbb{C}^{n}}K_{k}(z,z-\xi)\wedge f_{l}(z-\xi).\label{eq:Tkf-integrate-Cn}
\end{equation}

Note that, $f$ being assumed smooth, the estimate \eqref{eq:min_denom_weight_bd+dist}
implies that $T_{k}^{l}f(z)$ is smooth in $\Omega$ and it's derivatives
are given derivating under the sign of integration and does not depend
on the order of derivation.

As $\overline{\partial}\left(\sum_{k=0}^{n-1}T_{k}f\right)=f-P_{N}f$,
by Lemma \ref{lem:Sol-d-bar-Neumann-PN}, all derivatives of $\sum_{k=0}^{n-r}T_{k}f$
involving an anti-holomorphic derivative are derivatives of a coefficient
of $f$ up to a function smooth in $\overline{\Omega}$, so we need
to estimate derivatives of the $T_{k}f$ involving only holomorphic
derivatives (This is not necessarily but useful to simplify the notation
in the proofs that follow).

To fix notations, for any integer $J$ let $D_{J}=\BCo_{j=1}^{J}\frac{\partial}{\partial z_{i_{j}}}=\frac{\partial^{J}}{\partial z_{i_{1}}\ldots\partial z_{i_{J}}}$,
$i_{j}\in\left\{ 1,\ldots,n\right\} $, a derivative of order $J$
in the $z$ variable and denote $\Delta_{J}=\BCo_{j=1}^{J}\left(\frac{\partial}{\partial z_{i_{j}}}+\frac{\partial}{\partial\zeta_{i_{j}}}\right)$
the derivative obtained replacing $\frac{\partial}{\partial z_{i_{j}}}$
by $\frac{\partial}{\partial z_{i_{j}}}+\frac{\partial}{\partial\zeta_{i_{j}}}$,
$1\leq j\leq J$, in $D_{J}$. Then calculating $D_{J}T_{k}^{l}f$
with \eqref{eq:Tkf-integrate-Cn} and making the change of variable
$z-\xi=\zeta$ we obtain that $D_{J}T_{k}^{l}f$ is a sum of integrals
of the form
\begin{multline}
I_{J,k}^{l}=\int_{\Omega}\frac{z_{t}-\zeta_{t}}{\left|z-\zeta\right|^{2(n-k)}}\\
\Delta_{J}\left(\rho(\zeta)^{N-k}\left(\frac{1}{\frac{1}{K_{0}}S(z,\zeta)+\rho(\zeta)}\right)^{N+k}\right.\\
\left.\prod_{s=1}^{k}\left(\rho(\zeta)\partial_{\overline{\zeta}_{a_{s}}}Q_{b_{s}}(z,\zeta)+\left(\partial\rho/\partial\overline{\zeta}_{a_{s}}\right)Q_{b_{s}}(z,\zeta)\right)\widetilde{f}\right).\label{eq:derivatrives-of-Tk(f)}
\end{multline}
where $\widetilde{f}$ is a coefficient of $\chi_{l}f$.

\subsubsection{Proof of Theorem \ref{thm:Theorem-Sobolev-Lp-k}}

We assume $N>2m(n+1)+5n+2\left|\gamma\right|+N_{0}(1)+1$. By \eqref{eq:derivatrives-of-Tk(f)}
we have to estimate the operators associated to the integrals
\begin{multline*}
I_{1,k}^{l}(z)=\int_{\Omega}\frac{z_{t}-\zeta_{t}}{\left|z-\zeta\right|^{2(n-k)}}\\
\Delta\left(\rho(\zeta)^{N-k}\left(\frac{1}{\frac{1}{K_{0}}S(z,\zeta)+\rho(\zeta)}\right)^{N+k}\right.\\
\left.\prod_{s=1}^{k}\left(\rho(\zeta)\partial_{\overline{\zeta}_{a_{s}}}Q_{b_{s}}(z,\zeta)+\left(\partial\rho/\partial\overline{\zeta}_{a_{s}}\right)Q_{b_{s}}(z,\zeta)\right)\widetilde{f}(\zeta)\right)d\lambda(\zeta)
\end{multline*}
where $\Delta=\frac{\partial}{\partial z_{j}}+\frac{\partial}{\partial\zeta_{j}}$.

If $\Delta$ acts on $\widetilde{f}$ the integral corresponds to
an operator treated in Section \ref{sec:Proof-of-Theorem-2-1}, so
we assume that $\Delta$ does not act on $\widetilde{f}$.

By a straightforward calculus we are lead to consider the integral
operators of the following type:
\begin{multline}
I=\int_{\Omega}*\frac{z_{t}-\zeta_{t}}{\left|z-\zeta\right|^{2(n-k)}}\\
\left(\rho(\zeta)^{N'-k}\left(\frac{1}{\frac{1}{K_{0}}S(z,\zeta)+\rho(\zeta)}\right)^{N'+k+1}\right.\\
\left.\prod_{s=1}^{k}\left(\rho(\zeta)\partial_{\overline{\zeta}_{a_{s}}}Q_{b_{s}}(z,\zeta)+\left(\partial\rho/\partial\overline{\zeta}_{a_{s}}\right)Q_{b_{s}}(z,\zeta)\right)\right)\widetilde{f}(\zeta)d\lambda(\zeta)\label{eq:one-derivative-1}
\end{multline}
where $N'=N\mathrm{\,or\,}N-1$ and $*$ is a smooth function or
\begin{multline}
I=\int_{\Omega}\frac{z_{t}-\zeta_{t}}{\left|z-\zeta\right|^{2(n-k)}}\\
\left(\rho(\zeta)^{N-k}\left(\frac{1}{\frac{1}{K_{0}}S(z,\zeta)+\rho(\zeta)}\right)^{N+k}\right.\\
\left.\Delta\prod_{s=1}^{k}\left(\rho(\zeta)\partial_{\overline{\zeta}_{a_{s}}}Q_{b_{s}}(z,\zeta)+\left(\partial\rho/\partial\overline{\zeta}_{a_{s}}\right)Q_{b_{s}}(z,\zeta)\right)\right)\widetilde{f}(\zeta)d\lambda(\zeta).\label{eq:one-derivative-2}
\end{multline}

If $l=0$ the kernels in $I$ are, by Lemma \ref{lem:pointwise-estimate-kernel-Kk}
of the form $\frac{*}{\left|z-\zeta\right|^{2(n-k)-1}}$, and, if
$l\geq1$ and $z\notin2V_{l}$ the kernels are smooth. Thus the integrals
corresponding are easy to estimate so we consider only the cases $l\geq1$,
$z\in2V_{l}$ and $k\leq n-r$ to study $I$ because of the holomorphic
property of the $Q_{i}$.

The following Lemma is elementary:
\begin{sllem}
\label{lem:weight-est-Lp1}Let $p\in[1,+\infty[$ and $\gamma>-1$.
There exists a constant $C>0$ such that, for any function $g\in\mathcal{C}^{1}(\Omega)$,
$\left\Vert g\right\Vert _{L^{p}\left(\delta_{\Omega}^{\nu}d\lambda\right)}\leq C_{\nu}\left\Vert g\right\Vert _{L_{1}^{p}\left(\delta_{\Omega}^{\gamma}d\lambda\right)}$
where $\delta_{\Omega}$ denotes the distance to the boundary of $\Omega$
and
\[
\nu>\left\{ \begin{array}{cc}
-1 & \text{if }\gamma<p-1,\\
\gamma-p & \text{if }\gamma\geq p-1.
\end{array}\right.
\]
\end{sllem}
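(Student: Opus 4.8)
The plan is to prove Lemma \ref{lem:weight-est-Lp1} by a one-dimensional Hardy-type inequality applied in the direction normal to the boundary, reducing everything to a well-known weighted estimate on an interval. First I would localize: using a partition of unity subordinate to the covering $\{V_l\}$ of $\overline\Omega$, it suffices to treat a neighborhood $2V_l\cap\Omega$ of a boundary point, where (after a smooth change of coordinates straightening $\partial\Omega$) the distance $\delta_\Omega$ is comparable to the last real coordinate $t\in(0,\eta)$, the remaining coordinates $y$ ranging over a bounded set; away from the boundary the estimate is trivial since all weights are bounded above and below and $\nu>-1$ guarantees local integrability. So the claim is reduced to showing, for fixed $y$,
\[
\int_0^\eta |g(y,t)|^p\, t^{\nu}\,dt \;\le\; C\int_0^\eta\Big(|g(y,t)|^p + \big|\tfrac{\partial g}{\partial t}(y,t)\big|^p\Big)\, t^{\gamma}\,dt,
\]
and then integrating in $y$; note $\partial g/\partial t$ is controlled by $|\nabla g|$, whose $L^p(\delta^\gamma)$ norm is part of $\|g\|_{L^p_1(\delta^\gamma)}$.

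The heart of the matter is the scalar inequality above, which is a form of Hardy's inequality with weights. I would split according to the two cases. When $\gamma\ge p-1$ and $\nu>\gamma-p$: write $g(y,t)=g(y,\eta)-\int_t^\eta \partial_s g(y,s)\,ds$; the boundary term is harmless since $g(y,\eta)$ is controlled by the full norm over the fixed compact piece, and for the integral term one applies the classical Hardy inequality $\int_0^\eta t^{\nu}\big|\int_t^\eta h(s)\,ds\big|^p dt \lesssim \int_0^\eta s^{\nu+p}|h(s)|^p ds$, valid precisely when $\nu+p>p-1$, i.e. $\nu>-1$ — and here $\nu+p\ge\gamma$ combined with $t\le\eta$ bounded lets us replace $s^{\nu+p}$ by $s^{\gamma}$ up to a constant. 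When $\gamma<p-1$ and $\nu>-1$: now $t^\gamma$ is \emph{less} integrable, so one anchors at $t=0$ instead, writing $g(y,t)=g(y,t_0)+\int_{t_0}^t\partial_s g\,ds$ for a suitable reference point, or more directly uses that $\gamma<p-1$ forces $g(y,\cdot)$ to have an $L^p(t^\gamma)$-limit-free behavior near $0$ and applies the dual Hardy inequality $\int_0^\eta t^{\nu}\big|\int_0^t h(s)\,ds\big|^p dt\lesssim \int_0^\eta s^{\nu+p}|h(s)|^p ds$ together with an elementary bound $\int_0^\eta|g|^p t^\gamma\,dt$ for the constant-of-integration part; since $\gamma<p-1$ one checks $\nu+p>\gamma$ is automatic from $\nu>-1$ and $p>\gamma-\nu$... more carefully, here one simply needs $\int_0^\eta t^\nu dt<\infty$ and absorbs using $\gamma<p-1$.

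The main obstacle — really the only subtle point — is handling the \emph{critical threshold} correctly and making sure the constant depends only on $\nu,\gamma,p$ and $\Omega$, not on the localization chart: one must verify that in the case $\gamma\ge p-1$ the hypothesis $\nu>\gamma-p$ is exactly what makes the shifted weight $\gamma=\nu+p$ (the borderline Hardy exponent) lie in the admissible range $>p-1$, and in the case $\gamma<p-1$ that no boundary term at $t=0$ appears because membership in $L^p_1(\delta^\gamma)$ with $\gamma<p-1$ already excludes a nonzero trace. I would also note that the smooth flattening of the boundary distorts $\delta_\Omega$ and Lebesgue measure only by bounded factors, so comparability of the weights is preserved; and the finitely many charts plus the interior piece combine by the triangle inequality to give the global constant $C_\nu$. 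No geometry of finite type is used here — the lemma is purely a boundary-regularity fact — so the argument is self-contained given the classical weighted Hardy inequality on a half-line.
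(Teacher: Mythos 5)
The paper offers no proof of this lemma at all---it is introduced only with the sentence ``The following Lemma is elementary''---so your argument can only be judged on its own internal soundness. Your route (localize near the boundary, flatten it so that $\delta_{\Omega}\simeq t$, and reduce to a one-dimensional weighted Hardy-type inequality in the normal variable, the interior piece being trivial) is correct and is the natural way to make the word ``elementary'' precise. Three points should be tightened. First, in the case $\gamma\geq p-1$ you cannot bound the pointwise anchor value $g(y,\eta)$ by an $L^{p}$ norm; write instead $g(y,t)=g(y,t_{1})-\int_{t}^{t_{1}}\partial_{s}g\,ds$ and average over $t_{1}\in[\eta/2,\eta]$, so that the anchor contributes $\int_{\eta/2}^{\eta}\left|g(y,s)\right|^{p}ds$, taken over a region where $t^{\gamma}\simeq1$. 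With that fix, the classical Hardy inequality $\int_{0}^{\eta}t^{\nu}\bigl|\int_{t}^{\eta}h\bigr|^{p}dt\lesssim\int_{0}^{\eta}s^{\nu+p}\left|h\right|^{p}ds$ (valid for $\nu>-1$, which holds here because $\nu>\gamma-p\geq-1$) together with $\nu+p>\gamma$ and $s\leq\eta$ closes this case exactly as you say. Second, in the case $\gamma<p-1$ the one-weight ``dual'' Hardy inequality you quote, with $s^{\nu+p}$ on the right-hand side, is the version valid for $\nu<-1$, not $\nu>-1$; but, as you yourself note, no Hardy inequality is needed there: $\gamma<p-1$ and H\"older give $\int_{0}^{\eta}\left|\partial_{s}g\right|ds\lesssim\bigl(\int_{0}^{\eta}\left|\partial_{s}g\right|^{p}s^{\gamma}ds\bigr)^{1/p}$, hence $g(y,\cdot)$ is bounded near $t=0$ with a bound controlled by the $L_{1}^{p}(t^{\gamma})$ norm, and $\int_{0}^{\eta}t^{\nu}dt<\infty$ finishes. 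Third, your parenthetical claim that $\gamma<p-1$ ``excludes a nonzero trace'' is backwards: in that range the normal trace exists and is merely controlled, whereas it is the range $\gamma\geq p-1$ in which no trace need exist; nothing in your actual estimate uses this remark, so the slip is harmless.
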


\begin{proof}[Proof of (1) of Theorem \ref{thm:Theorem-Sobolev-Lp-k}]
 By Lemma \ref{lem:estimates-derivatives-Qi-in-kernel} the factors
of $\widetilde{f}$ in \eqref{eq:one-derivative-1} and \eqref{eq:one-derivative-2}
are bounded by
\[
\frac{1}{\left|z-\zeta\right|^{2(n-k)-1}}\left(\frac{\rho(\zeta)}{\frac{1}{K_{0}}S(z,\zeta)+\rho(\zeta)}\right)^{M}\frac{1}{\left|\rho(\zeta)\right|}\prod_{i=1}^{k}\tau_{i}\left(z,\delta_{\Omega}(z)+d(z,\zeta)\right)^{-2}.
\]

By Lemma \ref{lem:weight-est-Lp1} $\delta_{\Omega}(\zeta)f_{p}\in L^{p}\left(\delta_{\Omega}^{\gamma+\varepsilon}\right)$,
for all $\varepsilon>0$, and we can apply the last result of the
proof of Theorem \ref{thm:Theroem-without-derivative} to conclude.
\end{proof}

\begin{proof}[Proof of (2) of Theorem \ref{thm:Theorem-Sobolev-Lp-k}]
If $\gamma<p-1$ we use a transformation similar to one used by Deyun
Wu in \cite{wu-convex-98} for the integrals with $k\geq1$:
\begin{sllem}
\label{lem:Normal-derivative-weight-Wu}Let $\eta_{z}$ be the complex
normal at the point $z$ and let $\eta_{\zeta}^{*}=\frac{\partial}{\partial\eta_{z}}-\frac{\partial}{\partial\overline{\eta_{z}}}$
acting on the variable $\zeta$. Then for $\zeta\in V_{l}$ and $z\in2V_{l}$:
\begin{enumerate}
\item $\left|\eta_{\zeta}^{*}\rho(\zeta)\right|\apprle\left|z-\zeta\right|\apprle d(z,\zeta)^{1/m}\apprle\left|\frac{1}{K_{0}}S(z,\zeta)+\rho(\zeta)\right|^{1/m}$
\item $\left|\eta_{\zeta}^{*}S(z,\zeta)\right|\gtrsim1$
\item $\frac{1}{\left(\frac{1}{K_{0}}S(z,\zeta)+\rho(\zeta)\right)^{M}}=*\eta_{\zeta}^{*\alpha}\left(\frac{1}{\left(\frac{1}{K_{0}}S(z,\zeta)+\rho(\zeta)\right)^{M-\alpha}}\right)$
where $*$ is smooth in $\overline{\Omega}$.
\end{enumerate}
\end{sllem}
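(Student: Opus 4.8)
The plan is to prove the three assertions of Lemma~\ref{lem:Normal-derivative-weight-Wu} essentially by a direct computation, exploiting that near a boundary point $P_l$ the support function $S_0(z,\zeta)$ of Diederich--Fornaess is, up to the first order in $\zeta-z$, a holomorphic reproduction of $\rho$.

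\textbf{Part (1).} First I would recall from the construction of $S$ that $S(z,\zeta)=\sum_i Q_i(z,\zeta)(z_i-\zeta_i)$ with the $Q_i$ smooth on $\overline\Omega\times\overline\Omega$, and that on $2V_l\cap\Omega$ the coefficients $Q_i(z,\zeta)$ are holomorphic in $z$. Since $\eta^*_\zeta$ is a first-order differential operator in the $\zeta$ variable with bounded smooth coefficients, $\eta^*_\zeta\rho(\zeta)$ is simply a bounded linear combination of the first derivatives $\partial\rho/\partial\zeta_i(\zeta)$ and $\partial\rho/\partial\overline\zeta_i(\zeta)$; but $\eta^*_\zeta=\partial/\partial\eta_z-\partial/\partial\overline{\eta_z}$ is designed so that, at $\zeta=z$, the derivative of $\rho$ in the complex-normal real direction that appears is the \emph{real} normal derivative minus itself, i.e. vanishes at $\zeta=z$. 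Hence $\eta^*_\zeta\rho(\zeta)$ vanishes to first order on the diagonal, which gives $|\eta^*_\zeta\rho(\zeta)|\lesssim|z-\zeta|$. The chain $|z-\zeta|\lesssim d(z,\zeta)^{1/m}$ is \eqref{eq:estimation-tau-i-1-n} applied to the radius in the complex-normal direction together with the definition of $d$, and $d(z,\zeta)\lesssim|\frac1{K_0}S(z,\zeta)+\rho(\zeta)|$ is exactly \eqref{eq:min_denom_weight_bd+dist}. So Part~(1) reduces to the Taylor-expansion observation plus two already-proved estimates.

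\textbf{Part (2).} Here I would compute $\eta^*_\zeta S(z,\zeta)$ at $\zeta$ close to $z$. Differentiating $S(z,\zeta)=\sum_i Q_i(z,\zeta)(z_i-\zeta_i)$ in $\zeta$, the terms where $\eta^*_\zeta$ falls on $Q_i$ carry a factor $(z_i-\zeta_i)$ and are therefore $O(|z-\zeta|)$; the leading term is $-\sum_i Q_i(z,\zeta)\,\eta^*_\zeta(\zeta_i)$. Since $Q_i(z,z)$ equals (up to normalization) $\partial\rho/\partial z_i(z)$ by the defining property of the Diederich--Fornaess support function, and $\eta^*_\zeta(\zeta_i)$ picks out precisely the components of the complex normal $\eta_z$, this leading term is comparable to $|\partial\rho(z)|^2\neq0$, hence bounded below by a constant. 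The main point is that the ``$-\partial/\partial\overline{\eta_z}$'' half of $\eta^*_\zeta$ contributes the antiholomorphic derivative that does \emph{not} cancel against the holomorphic one (whereas on $\rho$ alone, being real, they did cancel to first order). This asymmetry between Part~(1) and Part~(2) is the heart of Wu's trick and the step I expect to require the most care in bookkeeping of the normalizations of $S_0$ and the $Q_i$.

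\textbf{Part (3).} This is the operational consequence. Write $g=\frac1{K_0}S(z,\zeta)+\rho(\zeta)$. Then $\eta^*_\zeta\big(g^{-(M-\alpha)}\big)=-(M-\alpha)\,g^{-(M-\alpha+1)}\,\eta^*_\zeta g$, and $\eta^*_\zeta g=\frac1{K_0}\eta^*_\zeta S+\eta^*_\zeta\rho$. By Part~(2), $\frac1{K_0}\eta^*_\zeta S$ is bounded away from $0$; by Part~(1), $\eta^*_\zeta\rho=O(|z-\zeta|)=O(|g|^{1/m})$, hence is a bounded smooth factor times $|g|^{1/m}$ which is harmless (it only improves the estimate). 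Therefore $\eta^*_\zeta g = a(z,\zeta)$ with $a$ smooth and $|a|\simeq1$ on $V_l$, so $\eta^*_\zeta\big(g^{-(M-\alpha)}\big)=-(M-\alpha)\,a\,g^{-(M-\alpha+1)}$, i.e. $g^{-(M-\alpha+1)}=\big(-(M-\alpha)a\big)^{-1}\eta^*_\zeta\big(g^{-(M-\alpha)}\big)$. Iterating $\alpha$ times (each step lowering the exponent by one and producing a new bounded smooth factor $1/a$) yields
\[
\frac{1}{g^{M}} = *\,\eta_\zeta^{*\,\alpha}\!\left(\frac{1}{g^{M-\alpha}}\right),
\]
with $*$ smooth in $\overline\Omega$ (a product of the bounded factors $1/a$ and finitely many derivatives thereof, all smooth since $S$, $\rho$ and $1/a$ are smooth where $|a|\simeq1$). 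The only thing to check in the iteration is that differentiating the already-produced smooth factors does not spoil smoothness, which is immediate, and that no extra negative power of $g$ is created, which is exactly guaranteed by $|a|\simeq 1$ — this is why Parts~(1) and (2) had to be established first.
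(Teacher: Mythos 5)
Your proposal is correct and follows essentially the same route as the paper's (very terse) proof: the key facts are $\eta_{\zeta}^{*}\rho(z)=0$ (so Taylor expansion gives $|\eta_{\zeta}^{*}\rho(\zeta)|\lesssim|z-\zeta|$, the rest of (1) being \eqref{eq:estimation-tau-i-1-n} and \eqref{eq:min_denom_weight_bd+dist}), the non-vanishing $\eta_{\zeta}^{*}S(z,z)\gtrsim1$ coming from the defining property of $S_{0}$ together with shrinking $V_{l}$, and the resulting integration-by-parts identity for (3). The only caveat, which affects the paper's loose statement of (3) as much as your argument, is that for $\alpha\geq2$ the iteration produces additional lower-order terms (when $\eta_{\zeta}^{*}$ hits the smooth factors $1/a$), so the identity should be read as holding modulo less singular remainders --- exactly as it is used in the proof of Theorem \ref{thm:Theorem-Sobolev-Lp-k}.
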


\begin{proof}
$\rho(\zeta)$ and $S(z,\zeta)$ are $\mathcal{C}^{\infty}$ functions
in the variable $\zeta$ on $\overline{\Omega}$ and $\eta_{\zeta}^{*}\rho($$z)=0$,
and, by the definition of the support function $S_{0}$, $\eta_{\zeta}^{*}S(z,z)\gtrsim1$
so (2) is valid if the balls $V_{l}$ are sufficiently small and,
for (1) we use \eqref{eq:min_denom_weight_bd+dist}.
\end{proof}
The two integrals \eqref{eq:one-derivative-1} and \eqref{eq:one-derivative-2}
are treated similarly so we only treat the first one.

Applying the preceding Lemma the integral \eqref{eq:one-derivative-1}
becomes
\begin{multline*}
I=\int_{\Omega}*\frac{z_{l}-\zeta_{l}}{\left|z-\zeta\right|^{2(n-k)}}\rho(\zeta)^{N'-k}\prod_{s=1}^{k}\left(\rho(\zeta)\partial_{\overline{\zeta}_{a_{s}}}Q_{b_{s}}(z,\zeta)+\left(\partial\rho/\partial\overline{\zeta}_{a_{s}}\right)Q_{b_{s}}(z,\zeta)\right)\\
\eta_{\zeta}^{*}\left(\left(\frac{1}{\frac{1}{K_{0}}S(z,\zeta)+\rho(\zeta)}\right)^{N'+k}\right)\widetilde{f}(\zeta)d\lambda(\zeta).
\end{multline*}

Now we integrate by parts the derivative $\eta_{\zeta}^{*}$. As the
integrant is identically zero on the boundary of $\Omega\cap V_{l}$
we get
\[
I=\int_{\Omega}K_{1}^{1}(z,\zeta)\widetilde{f}(\zeta)d\lambda(\zeta)+\int_{\Omega}K_{1}^{2}(z,\zeta)\eta_{\zeta}^{*}\widetilde{f}(\zeta)d\lambda(\zeta)=I_{1}^{1}+I_{1}^{2}
\]
and iterating this procedure for $I_{1}^{1}$ (which is possible because
$2(n-k)-1\leq2n-3$)
\[
I_{1}^{1}=\int_{\Omega}K_{2}^{1}(z,\zeta)\widetilde{f}(\zeta)d\lambda(\zeta)+\int_{\Omega}K_{2}^{2}(z,\zeta)\eta_{\zeta}^{*}\widetilde{f}(\zeta)d\lambda(\zeta)=I_{2}^{1}+I_{2}^{2}.
\]

If possible (integrability of a derivative of $\frac{z_{l}-\zeta_{l}}{\left|z-\zeta\right|^{2(n-k)}}$)
we iterate this procedure to $I_{2}^{1}$ and so on. Finally we get
a family of integrals
\[
I_{\alpha}^{1}=\int_{\Omega}K_{\alpha}^{1}(z,\zeta)\widetilde{f}d\lambda(\zeta)
\]
for $\alpha=\left(\alpha_{1},\alpha_{2},\alpha_{3}\right)$
\begin{multline*}
K_{\alpha}^{1}(z,\zeta)=*\left(\eta_{\zeta}^{*}\right)^{\alpha_{1}}\left(\frac{z_{l}-\zeta_{l}}{\left|z-\zeta\right|^{2(n-k)}}\right)\left(\eta_{\zeta}^{*}\right)^{\alpha_{2}}\left(\rho(\zeta)^{N'-k}\right)\left(\eta_{\zeta}^{*}\right)^{\alpha_{3}}\\
\left(\prod_{s=1}^{k}\left(\rho(\zeta)\partial_{\overline{\zeta}_{a_{s}}}Q_{b_{s}}(z,\zeta)+\left(\partial\rho/\partial\overline{\zeta}_{a_{s}}\right)Q_{b_{s}}(z,\zeta)\right)\right)\\
\left(\frac{1}{\frac{1}{K_{0}}S(z,\zeta)+\rho(\zeta)}\right)^{N'+k+1-|\alpha|},
\end{multline*}
the procedure being stopped when $\alpha_{1}=2k$ or $\alpha_{2}$
or $\alpha_{3}$ ``great enough'' (i. e. $\alpha_{2}=2m(n+1)$,
$\alpha_{3}=3n-1$).

By Lemma \ref{lem:estimates-derivatives-Qi-in-kernel}
\[
\left|\left(\eta_{\zeta}^{*}\right)^{\beta}\left(\rho(\zeta)\partial_{\overline{\zeta}_{a_{s}}}Q_{b_{s}}(z,\zeta)+\left(\partial\rho/\partial\overline{\zeta}_{a_{s}}\right)Q_{b_{s}}(z,\zeta)\right)\right|\apprle\left\{ \begin{array}{cc}
\frac{\epsilon^{2}}{\tau_{a_{s}}(\zeta,\epsilon)\tau_{b_{s}}(\zeta,\epsilon)} & \mathrm{\,if\,}\beta=0\\
\frac{\epsilon}{\tau_{a_{s}}(\zeta,\epsilon)\tau_{b_{s}}(\zeta,\epsilon)} & \mathrm{\,if\,}\beta=1\\
1 & \mathrm{\,if\,}\beta\geq2
\end{array}\right.
\]
where $\epsilon=\delta_{\Omega}(\zeta)+d(z,\zeta)$ which implies,
using that the $\left\{ a_{s}\right\} $ and $\left\{ b_{s}\right\} $
are distinct and the increasing property of the $\tau_{i}$,
\[
\left|\left(\eta_{\zeta}^{*}\right)^{\alpha_{3}}\left(\prod_{s=1}^{k}\left(\rho(\zeta)\partial_{\overline{\zeta}_{a_{s}}}Q_{b_{s}}(z,\zeta)+\left(\partial\rho/\partial\overline{\zeta}_{a_{s}}\right)Q_{b_{s}}(z,\zeta)\right)\right)\right|\apprle\epsilon^{2k-\alpha'_{3}}\prod_{i=1}^{k}\tau_{i}(\zeta,\epsilon)^{-2}
\]
where $\alpha'_{3}=\left\{ \begin{array}{cc}
\alpha_{3} & \mathrm{\,if\,}\alpha_{3}\leq2k\\
2k & \mathrm{\,if\,}\alpha_{3}\geq2k
\end{array}\right.$.

By (1) of Lemma \ref{lem:Normal-derivative-weight-Wu}
\[
\left|\left(\eta_{\zeta}^{*}\right)^{\alpha_{2}}\left(\rho(\zeta)^{N'-k}\right)\right|\apprle\left|\rho(\zeta)\right|^{N'-k-\alpha_{2}}A\left(\alpha_{2}\right)
\]
where
\begin{eqnarray*}
A\left(\alpha_{2}\right) & \apprle & \left|\eta_{\zeta}^{*}\rho(\zeta)\right|^{\alpha_{2}}+\left|\rho(\zeta)^{\frac{\alpha_{2}-1}{2}}\right|\\
 & \apprle & \left|\frac{1}{K_{0}}S(z,\zeta)+\rho(\zeta)\right|^{\frac{\alpha_{2}}{2m}}.
\end{eqnarray*}

Finally
\begin{multline*}
\left|K_{\alpha}^{1}(z,\zeta)\right|\apprle\frac{1}{\left|z-\zeta\right|^{2(n-k)-1+\alpha_{1}}}\left|\frac{\rho(\zeta)}{\frac{1}{K_{0}}S(z,\zeta)+\rho(\zeta)}\right|^{N'-k-\alpha_{2}}\\
\left|\frac{1}{K_{0}}S(z,\zeta)+\rho(\zeta)\right|^{\alpha_{1}-1+\alpha_{3}-\alpha'_{3}+\frac{\alpha_{2}}{2m}}\prod_{i=1}^{k}\tau_{i}(\zeta,\epsilon)^{-2}.
\end{multline*}

Note first that
\[
\left|K_{\alpha}^{1}(z,\zeta)\right|\apprle\frac{1}{\left|z-\zeta\right|^{2(n-k)-1+\alpha_{1}}}\left|\rho(\zeta)\right|^{\alpha_{1}-1}\left|\frac{\rho(\zeta)}{\frac{1}{K_{0}}S(z,\zeta)+\rho(\zeta)}\right|^{M}\prod_{i=1}^{k}\tau_{i}(\zeta,\epsilon)^{-2}
\]
for $M=N'-k-\alpha_{2}-\alpha_{1}+1>2\left|\gamma\right|+1$.

On the other hand if $\alpha_{2}$ or $\alpha_{3}$ is large then,
by Lemma \ref{lem:pointwise-estimate-kernel-Kk},
\[
\left|K_{\alpha}^{1}(z,\zeta)\right|\apprle\frac{1}{\left|z-\zeta\right|^{2n-1}}\left|\frac{\rho(\zeta)}{\frac{1}{K_{0}}S(z,\zeta)+\rho(\zeta)}\right|^{M}.
\]

If $\alpha_{1}=2k$ then, using \eqref{eq:estimation-tau-i-1-n},
\begin{eqnarray*}
\left|K_{\alpha}^{1}(z,\zeta)\right| & \apprle & \frac{1}{\left|z-\zeta\right|^{2n-1}}\left|\frac{1}{K_{0}}S(z,\zeta)+\rho(\zeta)\right|^{2k-1}\left|\frac{\rho(\zeta)}{\frac{1}{K_{0}}S(z,\zeta)+\rho(\zeta)}\right|^{M}\prod_{i=1}^{k}\tau_{i}(\zeta,\epsilon)^{-2}\\
 & \apprle & \frac{1}{\left|z-\zeta\right|^{2n-1}}\frac{1}{\left|\frac{1}{K_{0}}S(z,\zeta)+\rho(\zeta)\right|}\left|\frac{\rho(\zeta)}{\frac{1}{K_{0}}S(z,\zeta)+\rho(\zeta)}\right|^{M}=:W_{1}(z,\zeta).
\end{eqnarray*}

Now $K_{\alpha}^{2}(z,\zeta)=*K_{\alpha'}^{1}(z,\zeta)\left(\frac{1}{K_{0}}S(z,\zeta)+\rho(\zeta)\right)$
with $\left|\alpha'\right|=\left|\alpha\right|-1$, so
\[
\left|K_{\alpha}^{2}(z,\zeta)\right|\apprle\frac{1}{\left|z-\zeta\right|^{2(n-k)-1+\alpha_{1}}}\left|\rho(\zeta)\right|^{\alpha_{1}}\left|\frac{\rho(\zeta)}{\frac{1}{K_{0}}S(z,\zeta)+\rho(\zeta)}\right|^{M}\prod_{i=1}^{k}\tau_{i}(\zeta,\epsilon)^{-2}.
\]

If $\alpha_{1}$ is even ($\alpha_{1}=2\left(k-k_{1}\right)$), using
\eqref{eq:estimation-tau-i-1-n},
\begin{multline*}
\left|K_{\alpha}^{2}(z,\zeta)\right|\apprle\\
\frac{1}{\left|z-\zeta\right|^{2(n-k_{1})-1}}\left|\rho(\zeta)\right|^{2\left(k-k_{1}\right)}\left|\frac{\rho(\zeta)}{\frac{1}{K_{0}}S(z,\zeta)+\rho(\zeta)}\right|^{M}\prod_{i=1}^{k_{1}}\tau_{i}(\zeta,\epsilon)^{-2}\prod_{i=k_{1}+1}^{k}\tau_{i}(\zeta,\epsilon)^{-2}\\
\apprle\frac{1}{\left|z-\zeta\right|^{2(n-k_{1})-1}}\left|\frac{\rho(\zeta)}{\frac{1}{K_{0}}S(z,\zeta)+\rho(\zeta)}\right|^{M}\prod_{i=1}^{k_{1}}\tau_{i}(\zeta,\epsilon)^{-2}=:W_{2}(z,\zeta).
\end{multline*}

If $\alpha_{1}$ is odd then $\left|K_{\alpha}^{2}(z,\zeta)\right|\apprle\left|K_{\alpha'}^{2}(z,\zeta)\right|+\left|K_{\alpha''}^{2}(z,\zeta)\right|$
with $\alpha'=\left(\alpha_{1}-1,\alpha_{2},\alpha_{3}\right)$ and
$\alpha''=\left(\alpha_{1}+1,\alpha_{2},\alpha_{3}\right)$ so there
exists $k_{1}'$ and $k_{1}''$ such that
\begin{multline*}
\left|K_{\alpha}^{2}(z,\zeta)\right|\apprle\frac{1}{\left|z-\zeta\right|^{2(n-k_{1}')-1}}\prod_{i=1}^{k_{1}'}\tau_{i}(\zeta,\epsilon)^{-2}+\frac{1}{\left|z-\zeta\right|^{2(n-k_{1}'')-1}}\\
\left|\frac{\rho(\zeta)}{\frac{1}{K_{0}}S(z,\zeta)+\rho(\zeta)}\right|^{M}\prod_{i=1}^{k_{1}''}\tau_{i}(\zeta,\epsilon)^{-2}.
\end{multline*}

Finally $I$ is a sum of integrals applied to $\widetilde{f}$ with
kernels controlled by $W_{1}(z,\zeta)$ and applied to $\eta_{\zeta}^{*}\widetilde{f}$
with kernels controlled by $W_{2}(z,\zeta)$.

\medskip{}

Theorem \ref{thm:Theroem-without-derivative} has been proved applying
Lemma \ref{lem:kernel-operator-weighted-estimate-Lp-Lq}. If the conditions
of Lemma \ref{lem:kernel-operator-weighted-estimate-Lp-Lq}, for parameter
$s=1+\mu$, are verified then the operator $T$ maps $L^{p}\left(\delta^{\gamma}\right)$
into $L^{q}\left(\delta^{\gamma'}\right)$ for $\frac{1}{q}=\frac{1}{p}-\frac{\mu}{1+\mu}$.

For all $p\geq1$ under hypothesis of Theorem \ref{thm:Theroem-without-derivative}
we have proved the result if
\[
\mu\in\left[0,\frac{1-\frac{m}{p}(\gamma-\gamma')}{m(\gamma'+n)+2-(m-2)(r-1)}\right[=\left[0,\mu_{o}\right[
\]
(note that $\mu_{0}<\frac{1}{2n-1}$).

The integrals applied to $\eta_{\zeta}^{*}\widetilde{f}$ have kernels
bounded by those appearing in the proof of Theorem \ref{thm:Theroem-without-derivative}
and are well controlled when $\mu<\mu_{0}$.

For the integrals involving only $\widetilde{f}$ we have to estimate
the operator $T$ given by $Tf(z)=\int_{\Omega}K(z,\zeta)f(\zeta)d\lambda(\zeta)$
with
\begin{equation}
K(z,\zeta)=\frac{1}{\left|z-\zeta\right|^{2n-1}}\left[\frac{\rho(\zeta)}{\rho(\zeta)+\frac{1}{K_{0}}S(z,\zeta)}\right]^{M}\frac{1}{\left(\rho(\zeta)+\frac{1}{K_{0}}S(z,\zeta)\right)}\chi_{l}(\zeta).\label{eq:kernel-one-derivative}
\end{equation}
\end{proof}
Writing 
\[
Tf(z)=\int_{\Omega}K(z,\zeta)\delta(\zeta)^{-\alpha-\gamma'}\left(\delta^{\alpha}(\zeta)f(\zeta)\right)d\mu(\zeta)
\]
 with $d\mu(\zeta)=\delta^{\gamma'}(\zeta)d\lambda(\zeta)$ and $\alpha=\frac{\gamma-\gamma'-\beta}{p}$,
by Lemma \ref{lem:weight-est-Lp1} $\left\Vert \delta^{\alpha}f\right\Vert _{L^{p}\left(\delta^{\gamma'}\right)}\leq C_{\alpha}\left\Vert f\right\Vert _{L_{1}^{p}\left(\delta^{\gamma}\right)}$
with
\begin{equation}
\gamma-\gamma'<\beta<\gamma+1.\label{eq:conditions-on-beta}
\end{equation}
and it suffices to prove that the operator $T_{\beta}g(z)=\int_{\Omega}K(z,\zeta)\delta(\zeta)^{-\alpha-\gamma'}\left|g(\zeta)\right|d\mu(\zeta)$
maps $L^{p}\left(\delta^{\gamma'}\right)$ into $L^{q}\left(\delta^{\gamma'}\right)$
under the conditions on $\beta$ given in \eqref{eq:conditions-on-beta}.
\begin{itemize}
\item Case $p>1$. By Lemma \ref{lem:kernel-operator-weighted-estimate-Lp-Lq}
this result will be obtained proving the two following inequalities
\begin{equation}
I_{1}(z)=\int_{\Omega}\left|K(z,\zeta)\right|^{1+\mu}\delta(\zeta)^{\left(-\alpha-\gamma'\right)\left(1+\mu\right)+\gamma'-\varepsilon}d\lambda(\zeta)\apprle_{\varepsilon}\delta(z)^{-\varepsilon}\label{eq:I1-for-one-derivative}
\end{equation}
and
\begin{equation}
I_{2}(\zeta)=\int_{\Omega}\left|K(z,\zeta)\right|^{1+\mu}\delta(\zeta)^{\left(-\alpha-\gamma'\right)\left(1+\mu\right)}\delta(z)^{\gamma'-\varepsilon}d\lambda(z)\apprle_{\varepsilon}\delta(\zeta)^{-\varepsilon},\label{eq:I2-for-one-derivative}
\end{equation}
 with $\mu<\min\left(\frac{\beta-\left(\gamma-\gamma'\right)}{2np-\left(\beta-\left(\gamma-\gamma'\right)\right)+p\gamma'},\mu_{0}\right)=:\mu_{1}(\beta)$.\\
Let us verify first $I_{1}(z)$. If $l=0$, 
\[
\left|K(z,\zeta)\right|^{1+\mu}\delta(\zeta)^{\left(-\alpha-\gamma'\right)\left(1+\mu\right)+\gamma'-\varepsilon}\apprle\frac{1}{\left|z-\zeta\right|^{(2n-1)(1+\mu)}}
\]
and the result is trivial as $\mu<1/2n-1$.\\
If $l>0$: if $z\in\Omega\setminus2V_{l}$ the inequality is trivial
for similar reasons, so, we show it if $z\in2V_{l}$.\\
By \eqref{eq:kernel-one-derivative} and \eqref{eq:min_denom_weight_bd+dist}
\begin{multline*}
\left|K(z,\zeta)\right|^{1+\mu}\delta(\zeta)^{\left(-\alpha-\gamma'\right)\left(1+\mu\right)+\gamma'-\varepsilon}\leq\\
\frac{1}{\left|z-\zeta\right|^{(2n-1)(1+\mu)}}\frac{\left|\rho(\zeta)\right|^{M(1+\mu)}}{\left|\rho(\zeta)+\frac{1}{K_{0}}S(z,\zeta)\right|^{M(1+\mu)+1+\mu}}\delta(\zeta)^{\left(-\alpha-\gamma'\right)\left(1+\mu\right)+\gamma'-\varepsilon}\\
\apprle\frac{1}{\left|z-\zeta\right|^{(2n-1)(1+\mu)}}\left(\frac{1}{\delta(\zeta)+d(z,\zeta)}\right)^{-\gamma'+\left(\alpha+\gamma'\right)(1+\mu)+1+\mu+\varepsilon},
\end{multline*}
thus, if $\zeta\in P_{k}$ (recall that $P_{k}(z)=P\left(z,2^{k}\delta(z)\right)\setminus P\left(z,2^{k-1}\delta(z)\right)$
if $k\geq1$ and $P_{0}(z)=P\left(z,\delta(z)\right)$ and that the
hypothesis on $\mu$ imply $0\leq(2n-1)\mu<1$)
\begin{multline}
\int_{P_{k}(z)}\left|K(z,\zeta)\right|^{1+\mu}\delta(\zeta)^{\left(-\alpha-\gamma'\right)\left(1+\mu\right)+\gamma'-\varepsilon}\leq\\
\left(2^{k}\delta(z)\right)^{\gamma'-\left(\alpha+\gamma'\right)(1+\mu)-1-\mu-\varepsilon}\int_{P_{k}(z)}\frac{1}{\left|z-\zeta\right|^{(2n-1)(1+\mu)}}\\
\leq\left(2^{k}\delta(z)\right)^{1-(2n-1)\mu+\gamma'-\left(\alpha+\gamma'\right)(1+\mu)-1-\mu-\varepsilon}\text{ , by }\eqref{eq:integral-mod(z-zeta)-inverse-power-1+mu}.\label{eq:est_kernel_after_5_10}
\end{multline}
\\
Then $I_{1}(z)\apprle_{\mu}\delta_{\Omega}(z)^{-\varepsilon}$ if
$0\leq\mu<\mu_{1}$.\\
For $I_{2}(\zeta)$, using that $\delta(z)\apprle2^{k}\delta(\zeta)$
if $z\in P_{k}(\zeta)$ and \eqref{eq:3.16} the conclusion follows
as before if $\mu<\mu_{1}$.\\
If $\mu<\frac{\gamma'+1}{2np-(\gamma'+1)+p\gamma'}$ there exists
$\beta$, $\gamma'<\beta<\gamma+1$, such that
\[
\mu<\mu_{1}(\beta)
\]
achieving the proof for $p>1$, the case $k=0$ being trivial.
\item Case $p=1$. Formula \eqref{eq:est_kernel_after_5_10} and Lemma \ref{lem:kernel-operator-weighted-estimate-Lp-Lq}
imply the result.
\end{itemize}

\subsubsection{\label{sec:Proof-of-Theorem-2-3}Proof of Theorem \ref{thm:Theorem-Sobolev-Lp-k-0-compact-supp}}

Now we assume $N>2\left|\gamma\right|+d+2m(n+1)+5n+N_{0}(d)+1$. To
prove the theorem it suffices to prove that there exists a constant
$C$ depending only on $\Omega$ and $N$ such that for all $\left(0,r\right)$-form
$f$ with coefficients in $\mathcal{C}^{\infty}\left(\overline{\Omega}\right)$
with condition (1) and in $\mathcal{D}(\Omega)$ with condition (2)
\[
\left\Vert T_{K}(f)\right\Vert _{L_{k,r-1}^{q}\left(\delta_{\Omega}^{\gamma'}\right)}\leq C\left\Vert f\right\Vert _{L_{k,r}^{p}\left(\delta_{\Omega}^{\gamma}\right)},
\]
with $T_{K}(f)$ given by \eqref{eq:operator-Tk} and $p$, $\gamma$,
$q$ and $\gamma'$ as in the theorem.

\medskip{}

First we have to calculate the derivatives of the $T_{k}f$. As usual
we assume $l>0$ and $z\in2V_{l}$.

Using \eqref{eq:derivatrives-of-Tk(f)} for $J\geq1$, we have 
\[
\Delta_{J}\left(\left(\frac{\rho(\zeta)}{\frac{1}{K_{0}}S(z,\zeta)+\rho(\zeta)}\right)^{N+k}\prod_{s=1}^{k}\left(\frac{\partial_{\overline{\zeta}_{a_{s}}}Q_{b_{s}}(z,\zeta)}{\rho(\zeta)}+\frac{\left(\partial\rho/\partial\overline{\zeta}_{a_{s}}\right)Q_{b_{s}}(z,\zeta)}{\rho^{2}(\zeta)}\right)\widetilde{f}\right)
\]
 is a sum of expressions of the form
\begin{multline*}
\Delta_{J_{1}}\left(\left(\frac{\rho(\zeta)}{\frac{1}{K_{0}}S(z,\zeta)+\rho(\zeta)}\right)^{N+k}\right)\\
\Delta_{J_{2}}\left(\prod_{s=1}^{k}\left(\frac{\partial_{\overline{\zeta}_{a_{s}}}Q_{b_{s}}(z,\zeta)}{\rho(\zeta)}+\frac{\left(\partial\rho/\partial\overline{\zeta}_{a_{s}}\right)Q_{b_{s}}(z,\zeta)}{\rho^{2}(\zeta)}\right)\right)\Delta_{J_{3}}\widetilde{f}
\end{multline*}
where $\Delta_{J_{t}}=\BCo_{j=1}^{J_{t}}\left(\frac{\partial}{\partial z_{i_{j}}}+\frac{\partial}{\partial\zeta_{i_{j}}}\right)$
with $J_{1}+J_{2}+J_{3}=J$.

\begin{spprop}
\label{prop:final-estimate-kernel-J-derivated}Let $D_{J}^{z}=\BCo_{j=1}^{J}\frac{\partial}{\partial z_{i_{j}}}=\frac{\partial^{J}}{\partial z_{i_{j}}\ldots\partial z_{i_{J}}}$,
$i_{j}\in\left\{ 1,\ldots,n\right\} $ be a derivative of order $J$,
$l>0$. Then $D_{J}^{z}T_{K}f$ is controlled by a sum of integrals
$\int_{\Omega\cap2V_{l}}K_{i,l}(z,\zeta)D_{i}^{\zeta}\widetilde{f}(\zeta)$,
$0\leq i\leq J$, where $D_{i}^{\zeta}\widetilde{f}$ is a derivative
of $\widetilde{f}$ of order $J-i$ and
\begin{multline*}
\left|K_{i,l}(z,\zeta)\right|\apprle\left(\sum_{k=0}^{r-1}\frac{1}{\left|z-\zeta\right|^{2(n-k)-1}}\left[\frac{\rho(\zeta)}{\rho(\zeta)+\frac{1}{K_{0}}S(z,\zeta)}\right]^{M}\prod_{j=1}^{k}\tau_{j}\left(z,\alpha\right)^{-2}\right)\\
\frac{1}{\left|\rho(\zeta)+\frac{1}{K_{0}}S(z,\zeta)\right|^{i}}\chi_{l}(\zeta),
\end{multline*}
with $\alpha=\delta_{\Omega}(z)+d(z,\zeta)$ and $M$ large ($N$
being large depending on $J$) and $\delta_{\Omega}$ denotes the
distance to the boundary.
\end{spprop}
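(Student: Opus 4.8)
\emph{The plan.} I would obtain Proposition~\ref{prop:final-estimate-kernel-J-derivated} by differentiating $T_{K}f=\sum_{l}\sum_{k}C_{n}T_{k}^{l}f$ under the integral sign and expanding $D_{J}^{z}$ by the Leibniz rule, the whole point being to arrange the book-keeping so that every power of $\rho(\zeta)$ or of $\alpha:=\delta_{\Omega}(z)+d(z,\zeta)$ produced by the differentiation is swallowed by a single factor $\bigl[\rho(\zeta)/(\rho(\zeta)+\tfrac{1}{K_{0}}S(z,\zeta))\bigr]^{M}$ with $M$ large, while the derivatives that are neither swallowed nor applied to $\widetilde f$ are exactly the ones accounting for the extra power $\bigl|\rho(\zeta)+\tfrac{1}{K_{0}}S(z,\zeta)\bigr|^{-i}$. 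As in the proofs of Theorems~\ref{thm:Theroem-without-derivative}--\ref{thm:Theorem-Sobolev-Lp-k}, the cases $l=0$, and $l\geq1$ with $z\notin2V_{l}$, are trivial: there $\rho(\zeta)$, the quantity $g(z,\zeta):=\tfrac{1}{K_{0}}S(z,\zeta)+\rho(\zeta)$ and the functions $F_{s}(z,\zeta):=\rho(\zeta)\,\partial_{\overline{\zeta}_{a_{s}}}Q_{b_{s}}(z,\zeta)+(\partial\rho/\partial\overline{\zeta}_{a_{s}})\,Q_{b_{s}}(z,\zeta)$ together with all their derivatives are comparable to constants, and by the change of variables recalled in Section~\ref{sec:Definition-and-properties-solution-d-bar} (see \eqref{eq:Tkf-integrate-Cn}) no $z$-derivative falls on $|z-\zeta|^{-2(n-k)}$, so $D_{J}^{z}T_{k}^{l}f$ is trivially of the claimed form. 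Hence I would restrict to $l\geq1$, $z\in2V_{l}$, where by holomorphy of the $Q_{i}$ in $z$ only the indices $k$ of \eqref{eq:K-l-l-z-zeta-closes} occur, and start from \eqref{eq:derivatrives-of-Tk(f)}: integrals with kernel $\tfrac{z_{t}-\zeta_{t}}{|z-\zeta|^{2(n-k)}}\cdot\Delta_{J}\bigl(\rho(\zeta)^{N-k}\,g(z,\zeta)^{-(N+k)}\,\prod_{s=1}^{k}F_{s}(z,\zeta)\,\widetilde f(\zeta)\bigr)$.

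\emph{The Leibniz expansion.} Expanding $\Delta_{J}$, a typical term has $J_{3}$ of the first-order operators acting on $\widetilde f$; since $\widetilde f$ depends on $\zeta$ only this produces a pure $\zeta$-derivative of order $J_{3}$, and I set $i:=J-J_{3}$. Of the remaining $i$ operators, say $a$ fall on $g^{-(N+k)}$ and $b=i-a$ on $\rho(\zeta)^{N-k}\prod_{s}F_{s}$. Since $\partial_{z_{j}}+\partial_{\zeta_{j}}$ annihilates $z_{t}-\zeta_{t}$, $|z-\zeta|^{2}$ and $\widehat\chi(|z-\zeta|)$, every derivative of $g$ is bounded, so the $a$ operators cost at most $|g|^{-a}$, giving $|g|^{-(N+k)-a}$. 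Each of the $b$ operators costs at most $|\rho(\zeta)|^{-1}$: obvious on $\rho(\zeta)^{N-k}$, and on $\prod_{s}F_{s}$ by Lemma~\ref{lem:estimates-derivatives-Qi-in-kernel}; using moreover that the $a_{s}$ are distinct, the $b_{s}$ are distinct and $i\mapsto\tau_{i}$ is increasing one gets $\prod_{s}\tau_{a_{s}}(\zeta,\alpha)\tau_{b_{s}}(\zeta,\alpha)\gtrsim\prod_{j=1}^{k}\tau_{j}(\zeta,\alpha)^{2}$, which by Lemma~\ref{lem:3.4-maj-deriv-rho-equiv-tho-i-z-zeta} is $\simeq\prod_{j=1}^{k}\tau_{j}(z,\alpha)^{2}$. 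Hence the differentiated product of the $F_{s}$ is bounded by $|\rho(\zeta)|^{-b}\,\alpha^{2k}\,\prod_{j=1}^{k}\tau_{j}(z,\alpha)^{-2}$.

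\emph{Folding.} By \eqref{eq:min_denom_weight_bd+dist} and $\delta_{\Omega}\simeq|\rho|$ we have $\alpha\lesssim|\rho(z)|+d(z,\zeta)\lesssim|g(z,\zeta)|$, so $\alpha^{2k}\lesssim|g|^{2k}$, and therefore
\[
\rho(\zeta)^{N-k}\,|g|^{-(N+k)-a}\,\alpha^{2k}\,|\rho(\zeta)|^{-b}
\;\lesssim\;|\rho(\zeta)|^{\,N-k-b}\,|g|^{-(N-k)-a}
\;=\;\Bigl|\tfrac{\rho(\zeta)}{g(z,\zeta)}\Bigr|^{\,N-k-b}\,\frac{1}{|g(z,\zeta)|^{\,a+b}},
\]
with $a+b=i$ and $M:=N-k-b\geq N-(n-1)-d$ large by the standing hypothesis on $N$. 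Together with $|z-\zeta|^{-(2(n-k)-1)}$ from the numerator, with $\prod_{j=1}^{k}\tau_{j}(z,\alpha)^{-2}$ and with $\chi_{l}$, and summed over the finitely many relevant $k$ and over $l$, this is exactly the stated bound for $|K_{i,l}(z,\zeta)|$, paired with the $\zeta$-derivative of $\widetilde f$ of order $J-i$.

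\emph{Main obstacle.} The substance of the argument is the book-keeping just sketched: one must verify that no term consumes more powers of $\rho(\zeta)$ than the reservoir $N-k$ provides (this is what forces $N$ large in terms of $d$), and that the $\alpha$-powers coming from differentiating $\prod_{s}F_{s}$ are exactly absorbed --- via the one-sided comparison $\alpha\lesssim\bigl|\tfrac{1}{K_{0}}S(z,\zeta)+\rho(\zeta)\bigr|$ --- into the $g$-powers, collapsing everything to a single factor $[\rho/g]^{M}$. Once that comparison and the combinatorial inequality $\prod_{s}\tau_{a_{s}}\tau_{b_{s}}\gtrsim\prod_{j=1}^{k}\tau_{j}^{2}$ are in hand, the remaining steps are routine consequences of Lemmas~\ref{lem:3.4-maj-deriv-rho-equiv-tho-i-z-zeta}, \ref{lem:estimates-derivatives-Qi-in-kernel} and \ref{lem:pointwise-estimate-kernel-Kk}.
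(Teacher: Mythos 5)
Your proposal is correct and follows essentially the same route as the paper: the paper's (very terse) proof consists of the Leibniz expansion of $\Delta_{J}$ displayed just before the Proposition together with Lemma \ref{lem:estimates-derivatives-Qi-in-kernel} and Lemma \ref{lem:pointwise-estimate-kernel-Kk}, which is precisely the book-keeping you carry out. Your folding step (converting each $|\rho(\zeta)|^{-1}$ cost into $|g|^{-1}$ at the price of one unit of $M$, and absorbing $\alpha^{2k}\lesssim|g|^{2k}$ via \eqref{eq:min_denom_weight_bd+dist}) and the combinatorial inequality $\prod_{s}\tau_{a_{s}}\tau_{b_{s}}\gtrsim\prod_{j=1}^{k}\tau_{j}^{2}$ are exactly the ingredients the paper relies on elsewhere (e.g.\ in the proof of (2) of Theorem \ref{thm:Theroem-without-derivative}).
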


\begin{proof}
Comes from Lemma \ref{lem:estimates-derivatives-Qi-in-kernel} and
Lemma \ref{lem:pointwise-estimate-kernel-Kk}.
\end{proof}
\begin{sllem}
\label{lem:weight-est-Lpk-0}Let $p\in[1,+\infty[$, $\gamma\in\mathbb{R}$,
$d'\in\mathbb{N}$ and $g\in\mathcal{C}^{\infty}(\Omega)$. Then under
one of the two following condition
\begin{enumerate}
\item $\gamma\geq d'p-1$
\item $g$ has compact support in $\Omega$ (i.e. $g\in\mathcal{D}(\Omega)$)
\end{enumerate}
for $\nu>\gamma-d'p$ there exists a constant $C_{\nu}$ such that
$\left\Vert g\right\Vert _{L^{p}\left(\delta_{\Omega}^{\nu}d\lambda\right)}\leq C_{\nu}\left\Vert g\right\Vert _{L_{d'}^{p}\left(\delta_{\Omega}^{\gamma}d\lambda\right)}$.
\end{sllem}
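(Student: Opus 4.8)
The plan is to reduce the estimate to a one–dimensional weighted Hardy inequality in the direction normal to $\partial\Omega$ and to iterate it $d'$ times, each iteration exchanging one derivative against a factor $\delta_{\Omega}^{p}$ in the weight; the strict inequality $\nu>\gamma-d'p$ is exactly what leaves room for $d'$ such steps.

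Under condition (1), i.e. $\gamma\ge d'p-1$, I would argue by induction on $d'$. The case $d'=0$ is trivial ($\nu>\gamma$ forces $\delta_{\Omega}^{\nu}\lesssim\delta_{\Omega}^{\gamma}$ since $\Omega$ is bounded) and the case $d'=1$ is exactly Lemma \ref{lem:weight-est-Lp1} (note $\gamma\ge p-1\ge-1$, and that $\nu>\gamma-p$ is the hypothesis of that lemma when $\gamma\ge p-1$). For $d'\ge2$, fix $\varepsilon$ with $0<\varepsilon<\nu-(\gamma-d'p)$ and put $\eta=\gamma-(d'-1)p+\varepsilon$; then $\eta\ge p-1+\varepsilon>-1$, so Lemma \ref{lem:weight-est-Lp1} applies with source weight $\eta$ (its hypothesis being $\nu>\eta-p=\gamma-d'p+\varepsilon$, which holds) and yields $\|g\|_{L^{p}(\delta_{\Omega}^{\nu})}\lesssim\|g\|_{L^{p}(\delta_{\Omega}^{\eta})}+\|\nabla g\|_{L^{p}(\delta_{\Omega}^{\eta})}$. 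Since $\eta>\gamma-(d'-1)p$ and $\gamma\ge d'p-1\ge(d'-1)p-1$, applying the induction hypothesis at order $d'-1$ to $g$ and to each $\partial_{i}g\in\mathcal{C}^{\infty}(\Omega)$ bounds the right–hand side by $\|g\|_{L_{d'-1}^{p}(\delta_{\Omega}^{\gamma})}+\sum_{i}\|\partial_{i}g\|_{L_{d'-1}^{p}(\delta_{\Omega}^{\gamma})}\lesssim\|g\|_{L_{d'}^{p}(\delta_{\Omega}^{\gamma})}$, which closes the induction.

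Under condition (2), $g\in\mathcal{D}(\Omega)$, I would first localize. Fix $\eta_{0}>0$ small, $\eta_{0}<1$, and $\psi\in\mathcal{C}^{\infty}(\overline{\Omega})$ with $\psi\equiv1$ on $\{\delta_{\Omega}<\eta_{0}/2\}$ and $\psi\equiv0$ on $\{\delta_{\Omega}\ge3\eta_{0}/4\}$. On the support of $(1-\psi)g$ one has $\delta_{\Omega}\simeq1$, so this part contributes $\lesssim\|g\|_{L^{p}(\delta_{\Omega}^{\gamma})}$; since every derivative of $\psi$ is also supported where $\delta_{\Omega}\simeq1$, it remains to bound $\|\psi g\|_{L^{p}(\delta_{\Omega}^{\nu})}$ by $\|\psi g\|_{L_{d'}^{p}(\delta_{\Omega}^{\gamma})}\lesssim\|g\|_{L_{d'}^{p}(\delta_{\Omega}^{\gamma})}$. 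After a partition of unity on $\partial\Omega$ and flattening of the boundary, $\psi g$ may be taken supported in a slab $\mathbb{R}^{2n-1}\times(0,\eta_{0})$ with coordinates $(y,t)$, $\delta_{\Omega}\simeq t$, $d\lambda\simeq dy\,dt$, $|\partial_{t}^{\,j}(\psi g)|\lesssim\sum_{|\alpha|\le j}|D^{\alpha}(\psi g)|$; crucially $\psi g$ and all its $t$–derivatives vanish for $t$ near $0$ (because $g\in\mathcal{D}(\Omega)$) and for $t$ near $\eta_{0}$ (because $\psi$ does). For $F$ smooth on $(0,\eta_{0})$ vanishing near both endpoints and for $b<a+p$, $b\ne p-1$, one has the elementary uniform Hardy estimate $\int_{0}^{\eta_{0}}|F|^{p}t^{a}\,dt\lesssim\int_{0}^{\eta_{0}}|F'|^{p}t^{b}\,dt$ — via $F(t)=\int_{0}^{t}F'$ and H\"older when $b<p-1$, via $F(t)=-\int_{t}^{\eta_{0}}F'$ when $b>p-1$ (and Fubini when $p=1$). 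Then choose exponents $\mu_{0}=\nu$, $\mu_{i+1}=\mu_{i}+(p-\theta)$ for $0\le i<d'$, with $\theta>0$ small enough that $\mu_{d'}=\nu+d'(p-\theta)>\gamma$ (possible since $\nu+d'p>\gamma$) and that none of $\mu_{1},\dots,\mu_{d'}$ hits the value $p-1$. Applying the one–dimensional estimate once at each order and then Fubini gives $\|\psi g\|_{L^{p}(\delta_{\Omega}^{\nu})}\lesssim\|\partial_{t}^{\,d'}(\psi g)\|_{L^{p}(\delta_{\Omega}^{\mu_{d'}})}\lesssim\|\partial_{t}^{\,d'}(\psi g)\|_{L^{p}(\delta_{\Omega}^{\gamma})}\lesssim\|\psi g\|_{L_{d'}^{p}(\delta_{\Omega}^{\gamma})}$, where the middle step uses $\mu_{d'}>\gamma$ and $\delta_{\Omega}\le\eta_{0}<1$.

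The point requiring care is the second case: the one–dimensional Hardy estimates have to be applied with a constant independent of $g$, in particular of how close $\operatorname{supp}g$ comes to $\partial\Omega$; this is precisely why $\psi$ is inserted to make $\psi g$ vanish also at the inner edge $\{\delta_{\Omega}=\eta_{0}\}$, and why the intermediate exponents $\mu_{i}$ must be kept away from the crossover value $p-1$ — the strict gain $\nu>\gamma-d'p$ being exactly the slack that lets one fit $d'$ increments each strictly below $p$. The first case is by contrast a routine induction on top of Lemma \ref{lem:weight-est-Lp1}, the hypothesis $\gamma\ge d'p-1$ serving only to keep the intermediate weight exponent $\eta$ above $-1$ (indeed above $p-1$), so that that lemma loses exactly $p$ in the exponent at each step rather than giving the weaker conclusion available once the weight drops below $p-1$.
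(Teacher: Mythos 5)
Your argument is correct and follows essentially the same route as the paper: condition (1) by iterating Lemma \ref{lem:weight-est-Lp1} (the hypothesis $\gamma\geq d'p-1$ keeping every intermediate exponent at or above $p-1$), and condition (2) by a one-dimensional Hardy inequality in the normal direction applied $d'$ times. The only difference is presentational: the paper splits the second iteration between its unproved ``Claim'' (the Hardy step for exponents below $p-1$) and Lemma \ref{lem:weight-est-Lp1}, whereas you prove a two-sided Hardy estimate directly after localizing and flattening --- which is why you need the extra cutoff $\psi$ and the genericity of the increments $p-\theta$ --- so your write-up in effect supplies the proof the paper leaves as elementary.
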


\begin{proof}
Assume condition (1) satisfied. Then, by induction on $i=1,\ldots,d'$,
Lemma \ref{lem:weight-est-Lp1} gives $\left\Vert g\right\Vert _{L_{d'-i}^{p}\left(\delta_{\Omega}^{\nu}\right)}\leq C_{\nu}\left\Vert g\right\Vert _{L_{d'}^{p}\left(\delta_{\Omega}^{\gamma}\right)}$
for $\nu>\gamma-ip$.

Assume now condition (2) satisfied. Remark first that the following
Claim is an elementary consequence of Hölder inequality:
\begin{claim*}
Let $p\in[1,+\infty[$ and $\eta<p-1$. Then for $\nu>\eta-p$ and
any function $h\in\mathcal{D}(\Omega)$, $\left\Vert h\right\Vert _{L^{p}\left(\delta_{\Omega}^{\nu}d\lambda\right)}\leq C_{\nu}\left\Vert h\right\Vert _{L_{1}^{p}\left(\delta_{\Omega}^{\eta}d\lambda\right)}$.
\end{claim*}

Then using the Claim and Lemma \ref{lem:weight-est-Lp1} the Lemma
\ref{lem:weight-est-Lpk-0} is easily obtained by induction on $i=1,\ldots,d'$:
for $\nu>\gamma-ip$, $\left\Vert g\right\Vert _{L_{d'-i}^{p}\left(\delta_{\Omega}^{\nu}\right)}\leq C_{\nu}\left\Vert g\right\Vert _{L_{d'}^{p}\left(\delta_{\Omega}^{\gamma}\right)}$.
\end{proof}
\medskip{}

Let us now finish the proof of Theorem \ref{thm:Theorem-Sobolev-Lp-k-0-compact-supp}.

For the two first points of the theorem we use this Lemma (with $d'=d$)
and Proposition \ref{prop:final-estimate-kernel-J-derivated} (recall
$\left|\rho(\zeta)+\frac{1}{K_{0}}S(z,\zeta)\right|\apprge\delta_{\Omega}(\zeta)$).

For the last point we use the above Lemma (with $d'=d-1$) and Theorem
\ref{thm:Theorem-Sobolev-Lp-k}.

\section{\label{sec:Application-to-weighted-Bergman}Application to weighted
Bergman projections}

In view of theorems \ref{thm:Theorem-Sobolev-Lp-k} and \ref{thm:Theorem-Sobolev-Lp-k-0-compact-supp}
we have:
\begin{spprop}
\label{prop:epsilon-gains-for-solutions}Let $d\in\mathbb{N_{*}}$,
$p_{0}\geq1$, $\gamma_{0}$ $\gamma_{1}$, $(d-1)p_{0}-1<\gamma_{0}\leq\gamma_{1}$.
There exists an operator $T$ solving the $\overline{\partial}$-equation
and $\varepsilon>0$ such that for $p\in\left[1,p_{0}\right]$ and
$\gamma\in\left[\gamma_{0},\gamma_{1}\right]$:
\begin{enumerate}
\item $T$ maps continuously $L_{d}^{p}\left(\delta_{\Omega}^{\gamma}\right)$
in $L_{d}^{p+\varepsilon}\left(\delta_{\Omega}^{\gamma}\right)$;
\item $T$ maps continuously $L_{d}^{p}\left(\delta_{\Omega}^{\gamma}\right)$
in $L_{d}^{p}\left(\delta_{\Omega}^{\gamma-\varepsilon}\right)$.
\end{enumerate}
\end{spprop}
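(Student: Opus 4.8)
The plan is to read both assertions off directly from Theorems~\ref{thm:Theorem-Sobolev-Lp-k} and~\ref{thm:Theorem-Sobolev-Lp-k-0-compact-supp}, by specialising in each case the target exponent $q$ and the target weight $\gamma'$, and then exploiting compactness of the parameter rectangle $[1,p_0]\times[\gamma_0,\gamma_1]$ to make all choices uniform. For $d=1$ I use Theorem~\ref{thm:Theorem-Sobolev-Lp-k}; for $d\ge2$ I use Theorem~\ref{thm:Theorem-Sobolev-Lp-k-0-compact-supp}. In both theorems the solution operator depends only on $\rho$, on $\gamma$ and on $d$, and the construction of Section~\ref{sec:Definition-and-properties-solution-d-bar} only requires the integer $N$ to be large in terms of $d$ and of a bound for $|\gamma|$; hence one single $N$, and so one single operator $T$, serves for every $\gamma\in[\gamma_0,\gamma_1]$ and every $p\in[1,p_0]$. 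Also, since $(d-1)p_0-1<\gamma_0$, for all admissible $(p,\gamma)$ one has $\gamma\ge\gamma_0>(d-1)p-1$; this is precisely the inequality that lets these theorems be applied to forms that need not be compactly supported (it is in particular the hypothesis of Lemma~\ref{lem:weight-est-Lpk-0}, with $d'=d-1$, that enters the proof of Theorem~\ref{thm:Theorem-Sobolev-Lp-k-0-compact-supp}), and it forces $\gamma+1\ge\gamma_0+1>0$.

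For assertion~(1) I keep the weight, taking $\gamma'=\gamma$. Then $\gamma-\gamma'=0$, and the gain permitted by Theorem~\ref{thm:Theorem-Sobolev-Lp-k} or~\ref{thm:Theorem-Sobolev-Lp-k-0-compact-supp} is $\tfrac1q=\tfrac1p-\alpha$ for any $\alpha$ strictly below $\bigl(m(\gamma+n)+2-(m-2)(r-1)\bigr)^{-1}$, respectively below the minimum of this with $\tfrac{(\gamma+1)/p}{2n+\gamma}$ in the small-$\gamma$ regime. As $1\le r\le n-1$, $\gamma\in[\gamma_0,\gamma_1]$ and $\gamma+1>0$, these quantities are continuous and bounded below on the rectangle by a positive constant $\alpha_0$, which I fix with the extra requirement $\alpha_0<1/p_0$. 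Applying the relevant theorem with this $\alpha_0$ gives $T\colon L_d^p(\delta_\Omega^\gamma)\to L_d^{q}(\delta_\Omega^\gamma)$ with $\tfrac1q=\tfrac1p-\alpha_0$; since $q-p=\tfrac{\alpha_0p^2}{1-\alpha_0p}\ge\tfrac{\alpha_0}{1-\alpha_0}$ for every $p\in[1,p_0]$, putting $\varepsilon:=\tfrac{\alpha_0}{1-\alpha_0}$ one has $q\ge p+\varepsilon$, and as $\delta_\Omega^\gamma\,d\lambda$ is a finite measure on the bounded domain $\Omega$ (because $\gamma>-1$), H\"older's inequality gives the continuous inclusion $L_d^{q}(\delta_\Omega^\gamma)\hookrightarrow L_d^{p+\varepsilon}(\delta_\Omega^\gamma)$. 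This is~(1).

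For assertion~(2) I keep $p$ and lower the weight, taking $\gamma'=\gamma-\varepsilon$ and $q=p$, i.e.\ $\alpha=0$. Here one only has to check that, for $\varepsilon>0$ small enough independently of $(p,\gamma)\in[1,p_0]\times[\gamma_0,\gamma_1]$, the hypotheses of the relevant theorem hold: $\gamma'>-1$ (valid once $\varepsilon<\gamma_0+1$); $\gamma-\tfrac pm\le\gamma'\le\gamma$ (valid once $\varepsilon\le\tfrac1m$); the admissibility of $\alpha=0$, which requires the pertinent bound to be strictly positive, i.e.\ $1-\tfrac mp\varepsilon>0$ and, in the small-$\gamma$ regime, $\tfrac{\gamma+1}{p}-\varepsilon>0$ (valid once $\varepsilon<\min\{\tfrac1m,\tfrac{\gamma_0+1}{p_0}\}$); and, again in that regime, the extra requirement $\gamma'\ge\gamma-\tfrac{\gamma+1}{p}$ (same threshold). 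Choosing $\varepsilon$ below the minimum of these finitely many thresholds over the compact rectangle, the relevant theorem yields $T\colon L_d^p(\delta_\Omega^\gamma)\to L_d^p(\delta_\Omega^{\gamma-\varepsilon})$ continuously, which is~(2).

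The only genuine point is thus the \emph{uniformity}: producing a single $\varepsilon>0$ and a single $T$ (a single $N$) that serve the entire parameter rectangle at once. This I expect to be routine, because all the exponents occurring in Theorems~\ref{thm:Theorem-Sobolev-Lp-k} and~\ref{thm:Theorem-Sobolev-Lp-k-0-compact-supp} are explicit rational functions of $(p,\gamma,\gamma')$ which, after the normalisations above, stay continuous and bounded away from their degenerate values on $[1,p_0]\times[\gamma_0,\gamma_1]$; no analytic ingredient beyond those two theorems is needed, and this is where the (minor) bookkeeping will concentrate.
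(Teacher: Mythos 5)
Your proposal is correct and follows essentially the same route as the paper, whose entire proof consists of specifying $N>2\left|\gamma_{1}\right|+2m(n+2)+5n+d+1$ in the operator \eqref{eq:def-op-solv-d-bar} and tacitly invoking Theorems \ref{thm:Theorem-Sobolev-Lp-k} and \ref{thm:Theorem-Sobolev-Lp-k-0-compact-supp} with $\gamma'=\gamma$ for the gain in $p$ and with $q=p$, $\gamma'=\gamma-\varepsilon$ for the gain in the weight, exactly as you do. Your remark that for $d\geq2$ the hypothesis $(d-1)p_{0}-1<\gamma_{0}$ is what feeds Lemma \ref{lem:weight-est-Lpk-0} with $d'=d-1$ (rather than the literal conditions (2)--(3) of Theorem \ref{thm:Theorem-Sobolev-Lp-k-0-compact-supp}) is the same reading the paper itself implicitly relies on, and your uniformity/compactness bookkeeping is the intended (and omitted) content of the paper's one-line proof.
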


\begin{proof}
We use the operator $T$ defined in \eqref{eq:def-op-solv-d-bar}
with $N>2\left|\gamma_{1}\right|+2m(n+2)+5n+d+1$.
\end{proof}

The following Proposition is a special case of \cite[Theorem 2.1]{CPDY}:
\begin{spprop}
Let $D$ be a smoothly bounded convex domain of finite type in $\mathbb{C}^{n}$.
Let $\rho$ be a smooth defining function of $D$. Let $r\in\mathbb{R}_{+}$,
be a non negative real number, $\eta\in\mathcal{C}^{\infty}(\overline{D})$
be strictly positive and $\omega=\eta\left|\rho\right|^{r}$. Then,
for any integer $d$, the weighted Bergman projection $P_{\omega}^{\Omega}$
of the Hilbert space $L^{2}\left(\Omega,\omega d\lambda\right)$ maps
continuously the weighted Sobolev space $L_{d}^{2}\left(\Omega,\omega d\lambda\right)$
into itself.
\end{spprop}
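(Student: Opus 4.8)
The plan is to run the $\overline{\partial}$-to-Bergman argument of Section~4 of \cite{ChDu18}, feeding in the $\varepsilon$-gains of Proposition~\ref{prop:epsilon-gains-for-solutions} as the mechanism that drives an induction on $d$. First, since $\eta$ is smooth and strictly positive and $\left|\rho\right|\simeq\delta_{\Omega}$, the spaces $L_{d}^{2}(\Omega,\omega d\lambda)$ and $L_{d}^{2}(\Omega,\delta_{\Omega}^{r}d\lambda)$ coincide with equivalent norms, so I replace $\omega$ by $\delta_{\Omega}^{r}$ throughout. The operator $P_{\omega}$ is, by definition, the orthogonal projection of $L^{2}(\omega)$ onto its closed holomorphic subspace, hence bounded on $L^{2}(\omega)$; this is the case $d=0$. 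For $f$ smooth on $\overline{\Omega}$ (a dense subclass of every $L_{d}^{2}(\omega)$), the function $u:=f-P_{\omega}f$ satisfies $\overline{\partial}u=\overline{\partial}f$ and is $\omega$-orthogonal to holomorphic functions, i.e. it is the $\omega$-canonical solution of $\overline{\partial}u=\overline{\partial}f$. If $T$ is the solution operator given by Proposition~\ref{prop:epsilon-gains-for-solutions}, then $T(\overline{\partial}f)$ is another solution, so $u=(I-P_{\omega})T(\overline{\partial}f)$ and therefore
\[
P_{\omega}f=f-T(\overline{\partial}f)+P_{\omega}\bigl(T(\overline{\partial}f)\bigr).
\]

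Next I would argue by induction on $d$, assuming inductively that the statement, in the strengthened form discussed below, holds with $d$ replaced by $d-1$. Let $f\in L_{d}^{2}(\delta_{\Omega}^{r})$. Then $\overline{\partial}f$ is a $(0,1)$-form whose coefficients lie in $L_{d-1}^{2}(\delta_{\Omega}^{r})$ — one order of smoothness is consumed by $\overline{\partial}$ — so Proposition~\ref{prop:epsilon-gains-for-solutions}, applied with $d$ replaced by $d-1$, puts $T(\overline{\partial}f)$ not merely in $L_{d-1}^{2}(\delta_{\Omega}^{r})$ but in the strictly smaller spaces $L_{d-1}^{2+\varepsilon}(\delta_{\Omega}^{r})$ and $L_{d-1}^{2}(\delta_{\Omega}^{r-\varepsilon})$. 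Since $\Omega$ is bounded, $L_{d-1}^{2+\varepsilon}(\delta_{\Omega}^{r})\subset L_{d-1}^{2}(\delta_{\Omega}^{r})$, and the induction hypothesis gives $P_{\omega}\bigl(T(\overline{\partial}f)\bigr)\in L_{d-1}^{2}(\delta_{\Omega}^{r})$; combined with $f\in L_{d}^{2}(\delta_{\Omega}^{r})$, the displayed identity already yields $P_{\omega}f\in L_{d-1}^{2}(\delta_{\Omega}^{r})$. So all derivatives of $P_{\omega}f$ of order $\le d-1$ are controlled, with the right quantitative bound.

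What remains is to control the top-order derivatives $D^{\alpha}P_{\omega}f$, $\left|\alpha\right|=d$. Here I would exploit that $P_{\omega}f$ is holomorphic: a Cauchy/Hardy-type trading estimate — of the same flavour as Lemmas~\ref{lem:weight-est-Lp1} and~\ref{lem:weight-est-Lpk-0}, but running in the reverse direction and valid only for holomorphic functions — bounds $\left\Vert D^{\alpha}h\right\Vert _{L^{2}(\delta_{\Omega}^{r})}$, for $h$ holomorphic, by $\left\Vert h\right\Vert _{L_{d-1}^{2}(\delta_{\Omega}^{r'})}$ with a suitably smaller power $r'<r$. Applying this with $h=P_{\omega}f=f-(I-P_{\omega})T(\overline{\partial}f)$, one controls $\left\Vert f\right\Vert _{L_{d-1}^{2}(\delta_{\Omega}^{r'})}$ by Lemma~\ref{lem:weight-est-Lpk-0} and $\left\Vert(I-P_{\omega})T(\overline{\partial}f)\right\Vert _{L_{d-1}^{2}(\delta_{\Omega}^{r'})}$ using the weight-gain of $T$ (Proposition~\ref{prop:epsilon-gains-for-solutions}(2)) together with the boundedness of $P_{\omega}$ on $L_{d-1}^{2}(\delta_{\Omega}^{r'})$.

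The main obstacle is precisely to make this last step close. It requires, on one hand, knowing $P_{\omega}$ on the mixed-weight space $L_{d-1}^{2}(\delta_{\Omega}^{r'})$ with $r'<r$, which forces the induction hypothesis to be the slightly stronger statement that each weighted Bergman projection $P_{\sigma}$ is bounded on $L_{d-1}^{2}(\delta_{\Omega}^{\gamma'})$ for $\gamma'$ in a small interval around the exponent of $\sigma$ — itself established by exactly the same scheme — and, on the other hand, arranging the chain of weight and integrability shifts produced by the two $\varepsilon$-gains so that it terminates after finitely many steps while every intermediate space stays within the admissible ranges ($\gamma>-1$, $\gamma\ge(d-1)p-1$, $d(p-1)\le\gamma$, and so on) demanded by Theorems~\ref{thm:Theorem-Sobolev-Lp-k} and~\ref{thm:Theorem-Sobolev-Lp-k-0-compact-supp}. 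Equivalently, one is estimating the commutator $[D^{\alpha},P_{\omega}]$, whose action on the lower-order weighted Sobolev spaces is exactly what the induction hypothesis delivers. Once this bookkeeping is carried out, the displayed identity gives $P_{\omega}f\in L_{d}^{2}(\delta_{\Omega}^{r})=L_{d}^{2}(\Omega,\omega d\lambda)$ with the required bound, completing the induction and hence the proof.
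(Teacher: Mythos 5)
The paper does not actually prove this proposition: it is imported wholesale as a special case of \cite[Theorem 2.1]{CPDY}, an external result obtained by different techniques, and it then serves as an \emph{input} to the comparison-formula argument of Section \ref{sec:Application-to-weighted-Bergman}. Your attempt to rederive it from the paper's own $\overline{\partial}$-machinery therefore has to stand on its own, and it does not close. The decisive obstruction is one of ranges. The statement is asserted for every $r\geq0$ and every integer $d$, but Proposition \ref{prop:epsilon-gains-for-solutions} --- your engine --- requires $(d-1)p_{0}-1<\gamma_{0}$, i.e.\ with $p_{0}=2$ essentially $r>2d-3$ (or $r>2d-5$ after your shift $d\mapsto d-1$), and the underlying Theorems \ref{thm:Theorem-Sobolev-Lp-k} and \ref{thm:Theorem-Sobolev-Lp-k-0-compact-supp} demand $\gamma\geq dp-1$ or $d(p-1)\leq\gamma$ unless the datum is compactly supported, which $\overline{\partial}f$ is not. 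So already for $r=0$ and $d\geq3$ the induction cannot be set up; the proposition you are trying to prove is strictly stronger, in its range of $(r,d)$, than anything the paper's $\overline{\partial}$ estimates can deliver.

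Even inside the admissible range, the top-order step is where the argument genuinely fails to close. The ``reverse'' Hardy-type inequality for holomorphic $h$ costs two powers of the distance per derivative, $\left\Vert Dh\right\Vert _{L^{2}\left(\delta_{\Omega}^{r}\right)}\lesssim\left\Vert h\right\Vert _{L^{2}\left(\delta_{\Omega}^{r-2}\right)}$, so your $r'$ is $r-2$ and iterating down to order zero needs $r>2d-1$; worse, you then need $P_{\omega}$ --- the orthogonal projection for the weight $\delta_{\Omega}^{r}$ --- to be bounded on $L_{d-1}^{2}\left(\delta_{\Omega}^{r'}\right)$ with $r'\neq r$, a mixed-weight mapping property that is not the statement being proved and is supplied by nothing in the paper (for $d-1=0$ it is already a nontrivial Bekoll\'e--Bonami-type fact). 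Declaring that the induction hypothesis should be ``strengthened'' to include it makes the scheme circular as written, and your own admission that the ``bookkeeping'' remains to be carried out sits exactly where the proof is missing. The identity $P_{\omega}f=f-T(\overline{\partial}f)+P_{\omega}\left(T(\overline{\partial}f)\right)$ and the $\varepsilon$-gains are indeed how the paper transfers regularity between weighted projections, but only once exact $L_{d}^{2}(\omega)$-regularity for one projection is already in hand from \cite{CPDY}; they cannot bootstrap that base regularity from scratch.
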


In \cite[(1) of Theorem 1.1]{CDM} the following result was proved:
\begin{spprop}
Let $D$ be a smoothly bounded convex domain of finite type. Let $g$
the gauge of $D$, $\rho_{0}=g^{4}e^{1-\nicefrac{1}{g}}-1$ and
$\omega_{0}=\left(-\rho_{0}\right)^{r}$ with $r$ a non negative
rational number. Let $P_{\omega}$ the weighted Bergman projection
of $D$ associated to the Hilbert space $L^{2}\left(D,\omega_{0}d\lambda\right)$,
$d\lambda$ denoting the Lebesgue measure. Then for $s\in\mathbb{N}$,
$p\in\left]1,+\infty\right[$ and $-1<\beta<p\left(r+1\right)-1$,
$P_{\omega_{0}}$ maps the Sobolev space $L_{s}^{p}\left(D,\delta_{\partial\Omega}^{\beta}\right)$
continuously into itself.
\end{spprop}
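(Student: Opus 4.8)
The plan is to reduce the statement to the weighted $\overline{\partial}$-estimates of this paper and then bootstrap, following the method of Section~4 of \cite{ChDu18}; the original proof in \cite{CDM} instead went through a biholomorphic ``inflation'' of $D$ (which is why $r$ was taken rational and $\rho_{0}$ was the special function above), but the machinery assembled here permits a more direct route. Note first that $\rho_{0}=g^{4}e^{1-1/g}-1$ is a smooth defining function of the convex, hence lineally convex, finite type domain $D$, so $\omega_{0}=(-\rho_{0})^{r}\simeq\delta_{\partial D}^{r}$ and $L^{2}(D,\omega_{0}\,d\lambda)=L^{2}(D,\delta_{\partial D}^{r}\,d\lambda)$ with equivalent norms; thus the preceding proposition (applied with defining function $\rho_{0}$ and $\eta\equiv1$) provides the base case: $P_{\omega_{0}}$ is bounded on every $L^{2}_{d}(D,\delta_{\partial D}^{r})$. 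By a standard regularization we may assume $f$ smooth up to $\overline{D}$, closing the final estimate by density.

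Next I would set up the comparison identity. Let $T$ be the operator of Theorem~\ref{thm:Theorem-Sobolev-Lp-k-0-compact-supp} (with $N$ large, as in Proposition~\ref{prop:epsilon-gains-for-solutions}) and put $E=T\overline{\partial}$ acting on functions. Since $\overline{\partial}f$ is $\overline{\partial}$-closed, $Ef$ solves $\overline{\partial}u=\overline{\partial}f$, so $f-Ef$ is holomorphic and $P_{\omega_{0}}(f-Ef)=f-Ef$; equivalently, $f-P_{\omega_{0}}f$ is the $L^{2}(\omega_{0})$-minimal solution of $\overline{\partial}u=\overline{\partial}f$, which is $Ef$ minus its $L^{2}(\omega_{0})$-projection on the holomorphic functions. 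Hence
\[
P_{\omega_{0}}f=f-Ef+P_{\omega_{0}}(Ef),
\]
and iterating, for every $k\in\mathbb{N}$,
\[
P_{\omega_{0}}f=(\mathrm{Id}-E^{k})f+P_{\omega_{0}}(E^{k}f),
\]
where $(\mathrm{Id}-E^{k})f=(\mathrm{Id}-E)(\mathrm{Id}+E+\dots+E^{k-1})f$ is holomorphic.

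The heart of the proof is the book-keeping for $E=T\overline{\partial}$. Applying $\overline{\partial}$ to $f\in L^{p}_{s}(\delta_{\partial D}^{\beta})$ yields a $\overline{\partial}$-closed $(0,1)$-form with coefficients in $L^{p}_{s-1}(\delta_{\partial D}^{\beta})$, and Theorem~\ref{thm:Theorem-Sobolev-Lp-k-0-compact-supp} (or directly Proposition~\ref{prop:epsilon-gains-for-solutions}) then lets $E$ move the data: at the cost of one derivative lost through $\overline{\partial}$ it improves the pair $(p,\beta)$, either raising the integrability index by a fixed $\varepsilon>0$ or lowering the weight exponent by a fixed $\varepsilon>0$. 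The lost derivatives are repaid on $(\mathrm{Id}-E^{k})f$ and on $P_{\omega_{0}}(E^{k}f)$, both holomorphic, by using that a holomorphic function with controlled weighted $L^{q}$ norm automatically lies in higher weighted Sobolev spaces, with a loss on the weight exponent that is sharpened by the anisotropic finite type geometry (tangential derivatives are cheap, the Bergman kernel is concentrated), together with the Hardy-type inequalities of Lemmas~\ref{lem:weight-est-Lp1} and \ref{lem:weight-est-Lpk-0} and the corresponding Bergman-type embeddings $L^{2}(\delta^{a})\cap\mathcal{O}\hookrightarrow L^{p}(\delta^{b})$. The hypothesis $-1<\beta<p(r+1)-1$, that is $(\beta+1)/p<r+1$, is exactly the amount of room in the weight needed for these trades: it guarantees that after finitely many applications of $E$ the datum $E^{k}f$ lands in some $L^{2}_{d}(\delta_{\partial D}^{r})=L^{2}_{d}(\omega_{0})$ on which the base case applies, while the holomorphic remainder is estimated directly; summing the finitely many pieces gives $\|P_{\omega_{0}}f\|_{L^{p}_{s}(\delta_{\partial D}^{\beta})}\lesssim\|f\|_{L^{p}_{s}(\delta_{\partial D}^{\beta})}$.

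I expect the genuine difficulty to be this quantitative book-keeping rather than any isolated new inequality. The crude Cauchy and Sobolev estimates are far too lossy --- a normal derivative, or an $L^{2}\!\to\!L^{\infty}$ bound for holomorphic functions, costs a full power of the distance --- so the anisotropic finite type refinements must be used uniformly, and one must check that the admissible gains on $p$ and on the weight appearing in conditions (1)--(3) of Theorem~\ref{thm:Theorem-Sobolev-Lp-k-0-compact-supp} (functions of $m$, $n$, $r$ and of the intermediate exponents $\gamma,\gamma'$) are never exhausted before $L^{2}(\omega_{0})$ is reached, so that the iteration terminates after a finite number of steps depending only on $p,\beta,r,s,m,n$. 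This is tightest when $p$ is near $1$ or $\beta$ is near the endpoint $p(r+1)-1$, where the intermediate exponents must be chosen with care; elsewhere it should be routine given what is already in place.
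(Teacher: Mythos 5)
This proposition is not proved in the paper at all: it is quoted verbatim as \cite[(1) of Theorem 1.1]{CDM}, where it is established by the ``inflation'' trick you allude to (embedding $D$ into a higher-dimensional finite-type domain so that the weighted projection becomes an unweighted one --- hence the rationality of $r$ and the specific choice of $\rho_{0}$). So you are not reconstructing the paper's argument but proposing a new one, and the new one has two concrete gaps that I do not see how to repair.

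First, your operator $E=T\overline{\partial}$ is idempotent: since $T$ solves the $\overline{\partial}$-equation, $\overline{\partial}(T\overline{\partial}f)=\overline{\partial}f$, hence $E^{2}f=T\overline{\partial}(T\overline{\partial}f)=T(\overline{\partial}f)=Ef$, and the same collapse occurs if you use a different solution operator at each stage. Consequently $P_{\omega_{0}}f=(\mathrm{Id}-E^{k})f+P_{\omega_{0}}(E^{k}f)$ reduces for every $k\geq1$ to the single-step identity $P_{\omega_{0}}f=f-Ef+P_{\omega_{0}}(Ef)$; there is no accumulation of $\varepsilon$-gains in $p$ or in the weight, so the claim that ``after finitely many applications of $E$ the datum $E^{k}f$ lands in some $L^{2}_{d}(\delta^{r})$'' is unfounded. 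Second, even the one-step identity does not close: Theorem \ref{thm:Theorem-Sobolev-Lp-k-0-compact-supp} preserves the Sobolev order of the \emph{form}, so $f\in L^{p}_{s}(\delta^{\beta})$ gives $\overline{\partial}f\in L^{p}_{s-1}(\delta^{\beta})$ and only $Ef\in L^{q}_{s-1}(\delta^{\beta'})$ --- one derivative is irretrievably lost --- and your plan to ``repay'' it on the holomorphic pieces fails because the anisotropic gain applies only to complex-tangential derivatives: the complex-normal derivative of a holomorphic function costs a full power of $\delta_{\partial D}$, i.e.\ $\delta^{sp}$ in the weight for $L^{p}_{s}$, which cannot be absorbed on the whole range $-1<\beta<p(r+1)-1$. (Note also that the $\overline{\partial}$-theorems of this paper impose restrictions such as $\gamma\geq dp-1$ or compact support, far narrower than the stated range of $\beta$.) The viable routes remain the inflation argument of \cite{CDM} for $P_{\omega_{0}}$ itself, and the comparison formula $\varphi P_{\omega}(u)=P_{\omega_{0}}(\varphi u)+(\mathrm{Id}-P_{\omega_{0}})\circ A(P_{\omega}(u)\wedge\overline{\partial}\varphi)$ --- which compares two \emph{different} projections rather than iterating one --- for transferring its conclusion to more general weights, as the paper does in the following proposition.
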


The following Proposition extends partially this result:
\begin{spprop}
Let $d\in\mathbb{N}_{*}$. Let $D$ be a smoothly bounded convex domain
of finite type in $\mathbb{C}^{n}$. Let $\chi$ be any $\mathcal{C}^{\infty}$
non negative function in $\overline{D}$ which is equivalent to the
distance $\delta_{D}$ to the boundary of $D$ and let $\eta$ be
a strictly positive $\mathcal{C}^{\infty}$ function on $\overline{D}$.
Let $P_{\omega}$ be the (weighted) Bergman projection of the Hilbert
space $L^{2}\left(D,\omega d\lambda\right)$ where $\omega=\eta\chi^{r}$
with $r$ a non negative rational number. Then:

\begin{enumerate}
\item For $p\in\left[2,+\infty\right[$ and $-1<\beta\leq r$, $P_{\omega}$
maps the Sobolev space $L_{1}^{p}\left(D,\delta_{D}^{\beta}\right)$
continuously into itself.
\item If $d\geq2$, for $p\in\left[2,+\infty\right[$ if $r>dp-1$ and $d(p-1)-1<\beta\leq r$,
$P_{\omega}$ maps continuously the Sobolev space $L_{d}^{p}\left(D,\delta_{D}^{\beta}\right)$
into itself.
\end{enumerate}
\end{spprop}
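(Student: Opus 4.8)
The plan is to run the ``$\overline\partial$-to-Bergman'' bootstrap of \cite[Section~4]{ChDu18}, fed by the weighted $L^{p}$-Sobolev estimates of Proposition~\ref{prop:epsilon-gains-for-solutions} and anchored at the $L^{2}$ statement given above (the special case of \cite[Theorem~2.1]{CPDY}). Since $\chi\simeq\delta_{D}$ and $\eta$ is smooth and bounded above and below, $\omega=\eta\chi^{r}=\widetilde\eta\,|\rho|^{r}$ with $\widetilde\eta=\eta\,(\chi/|\rho|)^{r}\in\mathcal C^{\infty}(\overline D)$ strictly positive, so that statement gives that $P_{\omega}$ is bounded on $L_{d}^{2}(D,\delta_{D}^{r})$: this is the base case. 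Fix a target $p\ge 2$ and $\beta$ as in the statement and let $T$ and $\varepsilon>0$ be the $\overline\partial$-solving operator and gain furnished by Proposition~\ref{prop:epsilon-gains-for-solutions} on the relevant parameter box; for $d=1$ the condition $-1<\beta$ suffices, while for $d\ge 2$ the conditions $r>dp-1$ and $d(p-1)-1<\beta\le r$ are exactly what keeps the weight exponents occurring below within its admissibility range, where $T$ is only ever applied to the $(0,1)$-form $\overline\partial u$, hence with one derivative less.

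The engine is the elementary identity, valid on the dense subspace $\mathcal C^{\infty}(\overline D)$: as $P_{\omega}u$ is holomorphic, $\overline\partial(u-P_{\omega}u)=\overline\partial u$, and since $T(\overline\partial u)$ is another solution of $\overline\partial v=\overline\partial u$ their difference is holomorphic; applying $P_{\omega}$, which annihilates $u-P_{\omega}u$ and fixes holomorphic functions, gives
\[
P_{\omega}u=u-\bigl(I-P_{\omega}\bigr)\,T(\overline\partial u).
\]
The complementary projection $I-P_{\omega}$ is bounded on $L^{2}(\omega)$, and H\"older's inequality yields $L_{d}^{p}(D,\delta_{D}^{\beta})\hookrightarrow L^{2}(D,\omega\,d\lambda)$ for $p\ge 2$ and $\beta\le r$, so the right-hand side is well defined; the identity reduces the problem to estimating $T(\overline\partial u)$ together with $P_{\omega}$ applied to it.

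I would then argue by induction on $d$, and, for fixed $d$, by a finite bootstrap of the pair $(p,\gamma)$ starting from $(2,r)$ and ending at $(p,\beta)$. At each step one uses the identity, the inductive control of $P_{\omega}$ on the (slightly larger) space in which $T(\overline\partial u)$ lands, and the two gains of Proposition~\ref{prop:epsilon-gains-for-solutions}: gain~(1), $L_{d}^{p}(\delta^{\gamma})\to L_{d}^{p+\varepsilon}(\delta^{\gamma})$, to raise the integrability exponent up to $p$, and gain~(2), $L_{d}^{p}(\delta^{\gamma})\to L_{d}^{p}(\delta^{\gamma-\varepsilon})$, to lower the weight exponent down to $\beta$. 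The one derivative lost in passing from $u$ to $\overline\partial u$ is absorbed using the weighted Hardy inequalities of Lemmas~\ref{lem:weight-est-Lp1} and~\ref{lem:weight-est-Lpk-0} together with the differentiation rule for $T$ already used in the proof of Theorem~\ref{thm:Theorem-Sobolev-Lp-k}: a $z$-derivative of $T(\overline\partial u)$ is either a commutator term $\int(\partial_{z}+\partial_{\zeta})K\wedge\overline\partial u$, which obeys the same $L^{p}(\delta^{\gamma})\to L^{q}(\delta^{\gamma'})$ estimates as $T$ itself, or $T$ applied to $\overline\partial(\partial_{z}u)$.

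The main obstacle I expect is the closure of this bootstrap: one must organize the finite chain of intermediate spaces so that, after finitely many alternations of the two $\varepsilon$-gains and of the identity, one lands exactly in $L_{d}^{p}(D,\delta_{D}^{\beta})$ while never leaving the region where Proposition~\ref{prop:epsilon-gains-for-solutions}, the inductive hypothesis on $P_{\omega}$, and the weighted Hardy inequalities are all simultaneously valid --- which forces $\varepsilon$ to be taken small relative to $d$ and $p$ --- and one must keep exact track, at every step, of the single derivative lost on $\overline\partial u$ and recovered for the holomorphic function $P_{\omega}u$. The remaining points (density of $\mathcal C^{\infty}(\overline D)$, the equivalence $\omega\simeq\delta_{D}^{r}$, and interior regularity) are routine.
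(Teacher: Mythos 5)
Your identity $P_{\omega}u=u-\left(I-P_{\omega}\right)T(\overline{\partial}u)$ is algebraically correct, but it is not the identity the argument needs, and it leads to two genuine obstructions. First, a derivative is irretrievably lost: if $u\in L_{d}^{p}\left(D,\delta_{D}^{\beta}\right)$ then $\overline{\partial}u$ has coefficients only in $L_{d-1}^{p}\left(\delta_{D}^{\beta}\right)$, and the operator $T$ of Proposition \ref{prop:epsilon-gains-for-solutions} (as in Theorems \ref{thm:Theorem-Sobolev-Lp-k} and \ref{thm:Theorem-Sobolev-Lp-k-0-compact-supp}) preserves but does not increase the Sobolev order; hence $T(\overline{\partial}u)$, and with it the right-hand side of your identity, is controlled with at most $d-1$ derivatives. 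Your remark that the missing derivative is ``recovered for the holomorphic function $P_{\omega}u$'' has no mechanism behind it: Cauchy-type estimates for holomorphic functions trade each derivative for a factor $\delta_{D}^{p}$ in the weight, i.e.\ they require control in $L^{p}\left(\delta_{D}^{\beta-dp}\right)$, which is not available (and not even a legitimate weight) in the stated range of $\beta$. Second, the term $P_{\omega}\left(T(\overline{\partial}u)\right)$ is circular: it requires boundedness of $P_{\omega}$ itself on the space where $T(\overline{\partial}u)$ lands, which is essentially the statement being proved, and the only anchor you invoke is the $L^{2}_{d}(\omega)$ result from \cite{CPDY}; no finite alternation of the two $\varepsilon$-gains can close this loop while also repairing the derivative count.

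The paper's proof avoids both problems by comparing $P_{\omega}$ with the reference projection $P_{\omega_{0}}$, $\omega_{0}=\left(-\rho_{0}\right)^{r}$, for which the full $L_{s}^{p}\left(D,\delta_{D}^{\beta}\right)$ boundedness is already known from \cite{CDM} --- an ingredient your proposal never uses. Writing $\omega=\varphi\omega_{0}$ with $\varphi\in\mathcal{C}^{\infty}\left(\overline{D}\right)$ strictly positive, one has
\[
\varphi P_{\omega}(u)=P_{\omega_{0}}(\varphi u)+\left(\mathrm{Id}-P_{\omega_{0}}\right)\circ A\left(P_{\omega}(u)\wedge\overline{\partial}\varphi\right),
\]
where the crucial point is that $\overline{\partial}$ falls on the smooth density $\varphi$, not on $u$: the $(0,1)$-form $P_{\omega}(u)\wedge\overline{\partial}\varphi$ has coefficients equal to a smooth function times the holomorphic function $P_{\omega}(u)$, which carries all $d$ derivatives from the $L_{d}^{2}(\omega)$ base case of \cite{CPDY}. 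The first term is handled by the known mapping properties of $P_{\omega_{0}}$, the operator $\mathrm{Id}-P_{\omega_{0}}$ is bounded by the same result, and the $\varepsilon$-gains of Proposition \ref{prop:epsilon-gains-for-solutions} applied to $A$ bootstrap the pair of parameters from $(2,r)$ to $(p,\beta)$ without ever differentiating $u$. You should restructure your argument around this comparison formula.
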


Using Proposition \ref{prop:epsilon-gains-for-solutions} and the
two previous Propositions, the proof is completely similar to the
proof done in Section 4 of \cite{ChDu18} based on the following comparison
formula
\[
\varphi P_{\omega}(u)=P_{\omega_{0}}(\varphi u)+\left(\mathrm{Id}-P_{\omega_{0}}\right)\circ A\left(P_{\omega}(u)\wedge\overline{\partial}\varphi\right),
\]
where $\omega=\varphi\omega_{0}$ and $A$ is an operator solving
the $\overline{\partial}$-equation.

\bibliographystyle{amsalpha}
\providecommand{\bysame}{\leavevmode\hbox to3em{\hrulefill}\thinspace}
\providecommand{\MR}{\relax\ifhmode\unskip\space\fi MR }
\providecommand{\MRhref}[2]{%
  \href{http://www.ams.org/mathscinet-getitem?mr=#1}{#2}
}
\providecommand{\href}[2]{#2}

\end{document}